\documentclass{elsarticle}

\usepackage{hyperref}

\journal{arXiv}

\usepackage{textcomp}
\usepackage[utf8]{inputenc}
\usepackage[english]{babel}
\usepackage{amsmath}
\usepackage{amssymb}%

\usepackage[shortlabels]{enumitem}

\usepackage{amsthm}%
\usepackage{graphicx}
\usepackage{multirow}
\usepackage{xcolor}

\newtheorem{theorem}{Theorem}
\newtheorem{lemma}{Lemma}
\newtheorem{problem}{Problem}
\newtheorem{remark}{Remark}
\newtheorem{notation}{Notation}
\newtheorem{assumption}{Assumption}

\begin{document}

\begin{frontmatter}

\title{Robust multigrid solvers for the biharmonic problem in isogeometric analysis}

\author[mymainaddress]{Jarle Sogn\corref{mycorrespondingauthor}}
\author[mysecondaryaddress]{Stefan Takacs}
\cortext[mycorrespondingauthor]{Corresponding author}

\address[mymainaddress]{Institute of Computational Mathematics, 
Johannes Kepler University Linz,\\ 
Altenberger Str. 69, 4040 Linz, Austria}

\address[mysecondaryaddress]{Johann Radon Institute for Computational and Applied Mathematics (RICAM),\\
Austrian Academy of Sciences,
Altenberger Str. 69, 4040 Linz, Austria}

\begin{abstract}
In this paper, we develop multigrid solvers for the biharmonic problem in the 
framework of isogeometric analysis (IgA). In this framework, one typically sets up 
B-splines on the unit square or cube and transforms them to the domain of interest 
by a global smooth geometry function.
With this approach, it is feasible to set up $H^2$-conforming 
discretizations. We propose two multigrid methods for such a 
discretization, one based on Gauss Seidel smoothing and one based on mass 
smoothing. We prove that both are robust in the grid size, the latter is 
also robust in the spline degree. Numerical experiments illustrate the 
convergence theory and indicate the efficiency of the
proposed multigrid approaches, particularly of a hybrid approach combining
both smoothers.
\end{abstract}

\begin{keyword}
Biharmonic problem \sep
Isogeometric analysis \sep
Robust multigrid
\MSC[2010] 35J30 \sep 65D07 \sep 65N55
\end{keyword}

\end{frontmatter}

\section{Introduction}

Isogeometric analysis (IgA) was introduced around a decade ago 
as a new paradigm to the discretization of 
partial differential equations (PDEs) and has gained increasing attention
(cf.~\cite{hughes2005isogeometric} for the original paper and
\cite{da2014mathematical} for a survey paper). The 
idea of IgA -- from the technical point of view -- is to use B-spline spaces or 
similar spaces, like NURBS spaces, to discretize the problem.

In contrast to standard $C^0$-smooth high-order finite elements, the 
introduction of discretizations  with higher smoothness on general computational 
domains is not straight forward. In IgA, splines are first set up on the 
unit square or the unit cube, which is usually called the parameter domain. 
Then, a global smooth geometry transformation mapping from the parameter domain to the physical domain, i.e., the domain of interest, is used to define the ansatz functions on the physical domain.

Such an approach allows to construct arbitrarily smooth ansatz functions. 
So, we easily obtain $H^2$-conforming discretizations which can be used as
conforming discretizations of the biharmonic problem, which is 
for example of interest in plate theory (cf.~\cite{ciarlet2002finite}), Stokes streamline 
equations (cf.~\cite{girault2012finite}), or Schur 
complement preconditioners (cf.~\cite{KAMmagneOC, SZschur}). For the latter, also the three 
dimensional version of the biharmonic problem is of interest. Such $H^2$-conforming
discretizations are hard to realize in a standard finite element scheme. One option is
the Bogner-Fox-Schmit element, which requires a rectangular mesh, another option is the
Argyris elements for triangular meshes.
For such $H^2$-conforming elements, besides various kinds of other preconditioners
(cf.~\cite{doi:10.1137/15M1014887} and references therein), also
multigrid solvers have been proposed (cf.~\cite{zhang1989optimal}).
As alternative, multigrid solvers for various kinds of mixed or non-conforming formulations
have been developed
(cf.~\cite{doi:10.1137/0726062,Zhang:Xu, hanisch1993multigrid} and references therein).

In this paper, we develop iterative solvers for conforming Galerkin 
discretizations of the biharmonic problem in an isogeometric setting. 
Multigrid methods are known to solve linear systems arising from 
the discretization of partial differential equations with optimal 
complexity, i.e., their computational complexity grows typically only linearly with the 
number of unknowns. In an isogeometric setting, multigrid and multilevel methods
have been discussed within the last years 
(cf.~\cite{BuffaEtAl:2013,gahalaut2013multigrid,HofreitherZulehnerFull,HTZ:2016,
hofreither2016robust}). It was observed that multigrid methods based on standard 
smoothers, like the Gauss Seidel smoother, show robustness in the grid size 
within the isogeometric setting, their convergence rates however deteriorate 
significantly if the spline degree is increased. This motivated the recent 
publications~\cite{HTZ:2016,hofreither2016robust}. In the latter, a subspace 
corrected mass smoother was introduced, based on the approximation error 
estimates and inverse inequalities from~\cite{takacs2016approximation}.

The present paper is a continuation 
of~\cite{takacs2016approximation} and~\cite{hofreither2016robust}. We  
propose two multigrid methods for the linear system resulting from the 
discretization of the biharmonic problem, one based on Gauss Seidel smoothing and
one based on a subspace corrected mass smoother.
We prove that both are robust in the grid size, the latter is also robust in the spline degree. 
For this purpose, non-trivial extensions to both previous
papers are required. \cite{takacs2016approximation} covers the
approximation with functions whose odd derivatives vanish on the boundary;
an extension to functions whose even derivatives (including the function value itself) vanish
on the boundary might be straight-forward, however, for the first biharmonic problem we need a combination of both. A straight-forward extension of~\cite{hofreither2016robust} would require full $H^4$ regularity, which cannot even be assumed on the unit square (cf.~\cite{blum1980boundary}).
So, we only require partial regularity (Assumption~\ref{ass:reg}) and derive the convergence
results using Hilbert space interpolation.

We give numerical experiments both for domains described by trivial and  non-trivial geometry transformation in two and three dimensions. We observe that the subspace corrected mass smoother outperforms the Gauss Seidel smoother for significant large spline degrees. The negative effects of the geometry transformation to the subspace corrected mass smoother, which have also been observed for the Poisson problem, are amplified in case of the biharmonic problem. 
Approaches to master these effects are of particular interest for the biharmonic problem.
We propose a hybrid smoother which combines the strengths of both proposed smoothers and 
works well in our numerical experiments (cf. Section~\ref{sec:hybrid}).

The remainder of the paper is organized as follows. We introduce the model problem and its discretization in Section~\ref{sec:preliminaries}. Then, in Section~\ref{sec:approx:inverse}, we develop the required approximation error estimates. In Section~\ref{sec:splitting}, we set up a stable splitting of the spline spaces. In Section~\ref{sec:MGconstruction}, we introduce the multigrid algorithms and prove their convergence. Finally, in Section~\ref{sec:num}, we give results from the numerical experiments and draw conclusions.

\section{Preliminaries} 
\label{sec:preliminaries}
\subsection{Model problem}

In this paper, we consider the  first biharmonic problem as model problem, which reads as
follows. For a given domain $\Omega \subset \mathbb R^d$ with piecewise $C^2$-smooth Lipschitz
boundary $\Gamma = \partial \Omega$ and a given source function $f$, find the unknown function $u$ such that
\begin{align}
\Delta^2 u &= f \quad \text{in}\quad\Omega,\nonumber\\
         u &= 0 \quad \text{on}\quad\Gamma,\label{eq:mp1}\\
\nabla u \cdot \mathbf{n} &= 0 \quad \text{on}\quad\Gamma,\nonumber
\end{align} 
where $\mathbf{n}$ is the outer normal vector; for simplicity, we restrict ourselves
to homogenous boundary conditions. Our proposed solver can be extended
to other boundary conditions, namely
to the second and the third biharmonic problem, cf. Remarks~\ref{rem:second} and~\ref{rem:third}.

Following the principle of IgA, we assume that the computational domain $\Omega$ is represented by a
bijective geometry transformation
\begin{equation}\label{eq:geotrans}
	\mathbf{G}: \widehat{\Omega} \rightarrow \Omega
\end{equation}
mapping from the parameter domain $\widehat{\Omega}:=(0,1)^d$ to the physical domain $\Omega$.

The variational formulation of model problem~\eqref{eq:mp1} is as follows. 
\begin{problem}
\label{problem:bi1}
Given $f\in L^2\left(\Omega \right)$, find $u\in V:= H^{2}_{0}\left(\Omega\right)$ such that
\begin{equation}\label{eq:mcB}
\underbrace{ \left(\Delta u, \Delta v\right)_{L^2\left(\Omega\right)} }_{\displaystyle \left(u, v\right)_{\mathcal{B}(\Omega)}:=} =  \left(f, v\right)_{L^2\left(\Omega\right)}  \quad \forall\, v\in V.
\end{equation}
\end{problem}
Here and in what follows, $L^2$ and $H^r$ denote the standard Lebesgue and Sobolev spaces
with standard inner products $(\cdot,\cdot)_{L^2}$, $(\cdot,\cdot)_{H^r}$, norms 
$\|\cdot\|_{L^2}$, $\|\cdot\|_{H^r}$ and seminorms $|\cdot|_{H^r} = (\cdot,\cdot)_{H^r}^{1/2}$.
$H^{2}_{0}\left(\Omega\right)$ is the standard subspace of $H^2$, containing the functions where the values and the derivatives
vanish on the boundary, i.e.,
\[
    H^{2}_{0}\left(\Omega\right) =\left\lbrace v \in H^{2}\left(\Omega\right) \vert \, v = \nabla v\cdot n = 0 \text{ on }\Gamma\right\rbrace.
\]

Note that the inner products
$(\cdot,\cdot)_{H^2(\Omega)}$ and $\left(\cdot, \cdot\right)_{\mathcal{B}(\Omega)}$
coincide on $H^{2}_{0}\left(\Omega\right)$ (cf.~\cite{grisvard2011elliptic}), i.e., 
\begin{equation}\label{eq:mcB:equiv}
		\left(u, v\right)_{\mathcal{B}(\Omega)} = (u,v)_{H^2(\Omega)}\qquad \forall u,v \in H^2_0(\Omega).
\end{equation}

Let $\left(\cdot, \cdot\right)_{\bar{\mathcal{B}}(\Omega)}$ be the inner product obtained by removing
the cross terms from the inner product $\left(\cdot, \cdot\right)_{\mathcal{B}(\Omega)}$, i.e.,
\begin{equation}\label{eq:mcBbar}
\left(u, v\right)_{\bar{\mathcal{B}}(\Omega)} := \sum^d_{k=1}\left(\partial_{x_kx_k} u , \partial_{x_kx_k} u \right)_{L^2\left(\Omega\right)}.
\end{equation}
Here and in what follows, $\partial_{x} := \frac{\partial}{\partial x}$ and $\partial_{xy} := \partial_{x} \partial_{y}$
and $\partial_{x}^r := \frac{\partial^r}{\partial x^r}$ denote partial derivatives. 
\begin{lemma}
\label{lemma:BdecompEquiv2D}
The inner products defined in~\eqref{eq:mcB} and~\eqref{eq:mcBbar}
are spectrally equivalent, i.e.,
\begin{equation*}
\left(u, u\right)_{\bar{\mathcal{B}}(\Omega)} \leq \left(u, u\right)_{\mathcal{B}(\Omega)} \leq d\left(u, u\right)_{\bar{\mathcal{B}}(\Omega)}\quad \forall\, u\in H^2_0(\Omega).
\end{equation*} 
\end{lemma}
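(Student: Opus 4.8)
The plan is to expand $(u,u)_{\mathcal{B}(\Omega)} = \|\Delta u\|_{L^2(\Omega)}^2 = \sum_{k,l=1}^d (\partial_{x_kx_k}u,\partial_{x_lx_l}u)_{L^2(\Omega)}$ and compare term by term with $(u,u)_{\bar{\mathcal{B}}(\Omega)} = \sum_{k=1}^d \|\partial_{x_kx_k}u\|_{L^2(\Omega)}^2$, which is exactly the diagonal $k=l$ part of that double sum. Thus the whole statement reduces to controlling the cross terms $\sum_{k\neq l} (\partial_{x_kx_k}u,\partial_{x_lx_l}u)_{L^2(\Omega)}$. First I would establish the integration-by-parts identity
\begin{equation*}
  (\partial_{x_kx_k}u,\partial_{x_lx_l}u)_{L^2(\Omega)} = (\partial_{x_kx_l}u,\partial_{x_kx_l}u)_{L^2(\Omega)} = \|\partial_{x_kx_l}u\|_{L^2(\Omega)}^2 \qquad \forall\, u\in H^2_0(\Omega),
\end{equation*}
which holds for $u$ smooth with the boundary conditions of $H^2_0(\Omega)$ (the boundary integrals from the two integrations by parts vanish because $u$ and $\nabla u$ vanish on $\Gamma$) and then extends to all of $H^2_0(\Omega)$ by density. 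This is precisely the computation behind the well-known fact~\eqref{eq:mcB:equiv} that $(\cdot,\cdot)_{\mathcal{B}(\Omega)}$ and $(\cdot,\cdot)_{H^2(\Omega)}$ agree on $H^2_0(\Omega)$; I would either cite~\cite{grisvard2011elliptic} for it or reproduce the two-line argument.

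With that identity in hand, $(u,u)_{\mathcal{B}(\Omega)} = \sum_{k=1}^d \|\partial_{x_kx_k}u\|_{L^2(\Omega)}^2 + \sum_{k\neq l}\|\partial_{x_kx_l}u\|_{L^2(\Omega)}^2$, so every cross term is nonnegative, giving the lower bound $(u,u)_{\bar{\mathcal{B}}(\Omega)} \le (u,u)_{\mathcal{B}(\Omega)}$ immediately. For the upper bound I would instead use the Cauchy–Schwarz / Young inequality directly on the original cross terms before integrating by parts: $(\partial_{x_kx_k}u,\partial_{x_lx_l}u)_{L^2(\Omega)} \le \tfrac12\|\partial_{x_kx_k}u\|_{L^2(\Omega)}^2 + \tfrac12\|\partial_{x_lx_l}u\|_{L^2(\Omega)}^2$. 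Summing over all $k,l$ (including $k=l$, which contributes the diagonal exactly) yields $(u,u)_{\mathcal{B}(\Omega)} \le d \sum_{k=1}^d \|\partial_{x_kx_k}u\|_{L^2(\Omega)}^2 = d\,(u,u)_{\bar{\mathcal{B}}(\Omega)}$, since each index value appears in $d$ of the pairs. That closes both inequalities.

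There is no serious obstacle here; the only point requiring a little care is the justification of the integration-by-parts identity on $H^2_0(\Omega)$ for a domain with merely piecewise $C^2$ Lipschitz boundary, rather than on a smooth domain. I would handle this by the standard density argument: $C_c^\infty(\Omega)$ is dense in $H^2_0(\Omega)$, the identity is trivial for $u\in C_c^\infty(\Omega)$ (no boundary terms at all), and both sides are continuous in the $H^2(\Omega)$-norm, so the identity passes to the limit. Everything else is the elementary Young-inequality bookkeeping sketched above, and the constants $1$ and $d$ come out exactly as stated (the constant $d$ is sharp, attained e.g.\ by functions of the form $\prod_k \varphi(x_k)$).
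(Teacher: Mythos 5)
Your proof is correct and follows essentially the same route as the paper: the lower bound rests on the integration-by-parts identity $(\partial_{x_kx_k}u,\partial_{x_lx_l}u)_{L^2(\Omega)}=\|\partial_{x_kx_l}u\|_{L^2(\Omega)}^2$ on $H^2_0(\Omega)$ (which the paper invokes through~\eqref{eq:mcB:equiv}), and the upper bound on Cauchy--Schwarz and Young's inequality applied to the cross terms, exactly as in the paper. The only quibble is your closing parenthetical: the constant $d$ is not attained by tensor-product functions $\prod_k\varphi(x_k)$ in $H^2_0(\Omega)$, since equality in Cauchy--Schwarz would force $\partial_{x_kx_k}u$ and $\partial_{x_lx_l}u$ to be proportional, hence $\varphi''=\lambda\varphi$ with $\varphi=\varphi'=0$ at the endpoints and so $\varphi\equiv0$ --- but this aside plays no role in the proof.
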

\begin{proof}
	Using the Cauchy-Schwarz inequality and $ab \le \tfrac12( a^2+ b^2)$, we obtain
	\begin{align*}
	\|u\|_{\mathcal{B}(\Omega)}^2 &= \sum_{k=1}^d\sum_{l=1}^d \left(\partial_{x_kx_k} u,\partial_{x_lx_l} u\right)_{L^2\left(\Omega\right)}
		\\&\le \frac{1}{2}  \sum_{k=1}^d\sum_{l=1}^d \left( \left\|\partial_{x_kx_k} u\right\|_{L^2\left(\Omega\right)}^2 + \left\|\partial_{x_lx_l} u\right\|_{L^2\left(\Omega\right)}^2\right)
		= d \|u\|_{\bar{\mathcal{B}}(\Omega)}^2,
	\end{align*}
	which shows one direction. Using the boundary conditions and~\eqref{eq:mcB:equiv}, we obtain
	\begin{align*}
	\|u\|_{\mathcal{B}(\Omega)}^2
		& = \|u\|_{H^2(\Omega)}^2
		 = \|u\|_{\bar{\mathcal{B}}(\Omega)}^2 + 
		\sum_{k=1}^d\sum_{l\in\{1,\ldots,d\}\backslash\{k\}} \underbrace{\left(\partial_{x_kx_l} u,\partial_{x_kx_l}u\right)_{L^2\left(\Omega\right)} }_{\displaystyle \ge 0},
	\end{align*}
	which shows the other direction.
\end{proof}

\subsection{Spline space}

We consider standard tensor product B-spines with maximum continuity (see, e.g., \cite{de1978practical}).
Let the interval $(0,1)$ be subdivided into $m\in \mathbb{N}$ elements of length $h=1/m$. 
The space of splines of degree $p\in \mathbb{N}:=\{1,2,3,\ldots\}$ with maximum continuity is defined by
\begin{equation*}
S_{p,h}(0,1):= \left\lbrace u\in C^{p-1}\left(0,1\right): u\vert_{\left(\left(i-1\right)h,ih\right)}\in \mathcal{P}^p \quad \forall\, j=1,\ldots m\right\rbrace,
\end{equation*} 
where $C^{p-1}\left(0,1\right)$ is the space of all $p-1$ times continuously differentiable functions on $\left(0,1\right)$ and $\mathcal{P}^p$ is the space of all polynomials with degree at most $p$. We use the standard $B$-splines with open knot vector as basis for $S_{p,h}(0,1)$. The dimension of $S_{p,h}(0,1)$ is $n:=\dim{S_{p,h}(0,1)}= m+p$. We will from time to time omit the subscripts $p$ and $h$ of a spline space $S_{p,h}(0,1)$ and write $S(0,1)$ or just $S$. For higher dimensions $d>1$, we use the tensor product splines
\begin{equation*}
           S_{p,h}(\widehat{\Omega})= S_{p,h}(0,1)\otimes \ldots \otimes S_{p,h}(0,1),
\end{equation*}
defined over $\widehat{\Omega} = \left(0,1\right)^d$. For notational convenience, we assume that all of those univariate spline spaces $S_{p,h}$
have the same spline degree $p$ and the same number of elements $m$, however, this in not necessary and the results in this paper can easily
be generalized  to the case with different $p$ and $m$.   

Based on the spline space on the parameter space, we define the spline space on the
physical space using the standard pull-back principle as
\[
	S_{p,h}(\Omega) = \{ u \; :\; u \circ \mathbf{G} \in S_{p,h}(\widehat{\Omega})\},
\]
where $\mathbf{G}$ is the geometry transformation~\eqref{eq:geotrans}. We assume that the geometry transformation
is sufficiently smooth such that the following estimate holds.
\begin{assumption} 
\label{ass:GeoEqui}
Assume that there exist constants $\underline{\alpha}>0$ and  $\overline{\alpha}$ such that
\begin{align*}
\underline{\alpha}\,\|u \|_{H^q(\Omega)} \leq \|u\circ \mathbf{G} \|_{H^q(\widehat{\Omega})} \leq \overline{\alpha}\, \|u \|_{H^q(\Omega)} \quad \forall\, u \in H^q(\Omega),\, q\in\{2,3\}.
\end{align*}
\end{assumption}

We discretize the Problem~\ref{problem:bi1} using the Galerkin principle as follows. 
\begin{problem}
\label{problem:bi1:disc}
Given $f\in L^2(\Omega )$, find $u\in V_h:= S_{p,h}^0(\Omega):= H^{2}_{0}\left(\Omega\right) \cap S_{p,h}(\Omega)$ such that
\begin{equation}
\label{eq:Hspaces1}
\left( u,  v\right)_{\mathcal{B}(\Omega)} =  \left(f, v\right)_{L^2\left(\Omega\right)}  \quad \forall\, v\in V_h.
\end{equation}
\end{problem}

By fixing a basis for the space $S_{p,h}^0(\Omega)$, we can rewrite the Problem~\ref{problem:bi1:disc} in matrix-vector notation as
\begin{equation}\label{eq:lin:sys}
        \mathcal B_h u_h = f_h,
\end{equation}
where $\mathcal B_h$ is a standard stiffness matrix, $u_h$ is the representation of the corresponding function
$u$ with respect to the chosen basis and the vector $f_h$ is obtained by testing the right hand side functional
$(f,\cdot)_{L^2(\Omega)}$ with the basis functions.

For convenience, we use the following notation.
\begin{notation}\label{not:2:1}
    Throughout this paper, $c$ is a generic positive constant independent of $h$ and $p$, but may depend on $d$ and $\mathbf{G}$.
\end{notation}

\subsection{Regularity}

In the following sections, we use Aubin-Nitsche duality arguments for showing the desired error estimates. This requires
that the following assumption holds.
\begin{assumption}\label{ass:reg}
        For a given $f\in H^{-1}(\Omega)$, the solution $u\in H^{2}_{0}\left(\Omega\right)$ of the first biharmonic problem~\eqref{eq:mp1} satisfies
        \[
                u\in H^3(\Omega)\qquad \mbox{and}\qquad \|u\|_{H^3(\Omega)} \le c \|f\|_{H^{-1}(\Omega)}.
        \]
\end{assumption}
Such a result is satisfied for convex polygonal domains (cf. \cite{blum1980boundary}). 
It is worth noting that this implies that the result also holds for the parameter domain $\widehat{\Omega}=(0,1)^2$.

As we only rely  on a partial regularity result, we use Hilbert space interpolation (cf.~\cite{lofstrom1976interpolation,Adams:Fournier})
to derive our estimates.
defined, e.g., with the K-method, is a Hilbert space with norm
$
    \|\cdot\|_{[A_1,A_2]_{\theta}}.
$
Applied to Sobolev spaces 
$H^{m}(\Omega)$ and $H^{n}(\Omega)$, we obtain 
\begin{equation}\label{eq:sobolev}
      \|\cdot\|_{[H^{m}(\Omega),H^{n}(\Omega)]_{\theta}}^2 = \|\cdot\|_{H^{(1-\theta)m+\theta n}(\Omega)}^2,
\end{equation}
see~\cite[Theorem~6.4.5]{lofstrom1976interpolation},
applied to scaled Hilbert spaces $A_1$ and $\gamma A_2$  with a scaling parameter $\gamma>0$, we obtain
\begin{equation}\label{eq:scaling}
      \|\cdot\|_{[ A_1, \gamma A_2 ]_{\theta}}^2 = \gamma^{\theta} \|\cdot\|_{[A_1,A_2]_{\theta}}^2
\end{equation}
and applied to the intersections of two Hilbert spaces $A_1 \cap A_2$ with norm ${\|\cdot\|_{A_1\cap A_2}^2:= \|\cdot\|_{A_1}^2+\|\cdot\|_{A_2}^2}$, we obtain
\begin{equation}\label{eq:hilbsum}
      \|\cdot\|_{[A_1, A_1 \cap A_2 ]_{\theta}}^2 \le c \|\cdot\|_{ A_1 \cap [A_1,A_2]_{\theta} }^2,
\end{equation}
see~\cite[Lemma~6.1]{TZ:2013}, and applied to dual norms, we obtain
\begin{equation}\label{eq:hilbdual}
      \|\cdot\|_{([A_1,A_2]_{\theta})'}^2 = \|\cdot\|_{ [A_1',A_2']_{\theta} }^2,
\end{equation}
see~\cite[Theorem~3.7.1]{lofstrom1976interpolation}.
As the interpolation defined by the K-method is an \emph{exact interpolation function},
see~\cite[Theorem~3.1.2]{lofstrom1976interpolation}, we know that any
bounded operator $\Psi$, which maps from a Hilbert space $A_1$ to a Hilbert space $B_1$ and
from a Hilbert space $A_2$ to a Hilbert space $B_2$, 
maps also from $[A_1,A_2]_{\theta}$ to $[B_1,B_2]_{\theta}$ and satisfies 
\begin{equation}\label{eq:h1}
		\|\Psi a\|_{[B_1,B_2]_{\theta}} \le c M_1^{1-\theta} M_2^{\theta} \|a\|_{[A_1,A_2]_{\theta}}
	\qquad \mbox{with}\quad M_i := \sup_{a_i\in A_i} \frac{\|\Psi a_i\|_{B_i}}{\|a_i\|_{A_i}}
\end{equation}
for all $\theta\in(0,1)$,
where $c$ only depends on $\theta$.

\section{Approximation error estimates}
\label{sec:approx:inverse}
One vital component needed to prove multigrid convergence is an approximation error estimate. Approximation error estimates between the spaces $L^2\left(\Omega\right)$ and $H^{1}\left(\Omega\right)$ are given in \cite{takacs2016approximation,floater2017optimal} and used in \cite{HTZ:2016,hofreither2016robust} to prove convergence for a multigrid solver for the Poisson problem. For the biharmonic problem we need similar estimates for $H^{2}\left(\Omega\right)$.

\subsection{Approximation error estimates for the periodic case}

We start the analysis for the periodic case. We define for each $q\in \mathbb N$ the periodic Sobolev space
\[
H^{q}_{per} (-1,1):= \left\lbrace u \in H^q(-1,1)  \,:\, u^{\left(l\right)}\left(-1\right)=u^{\left(l\right)}\left( 1\right), \quad \forall\, l\in\mathbb N_0 \mbox{ with } l<q \right\rbrace
\]
and for each $p\in \mathbb N$ the periodic spline space
\[
S_{p,h}^{per}(-1,1) := \left\lbrace u \in S(-1,1)  \,:\, u^{\left(l\right)}\left(-1\right)=u^{\left( l\right)}\left( 1\right)\quad \forall\, l\in\mathbb N_0 \mbox{ with } l<p \right\rbrace.
\]
Let
$T_{p,h}^{q,per}$ be the $H^{q,\circ}$-orthogonal projection into $S_{p,h}^{per}(-1,1)$, where 
the underlying scalar product $\left(\cdot, \cdot\right)_{H^{q,\circ}(-1,1)}$ is given by
\begin{align*}
\left(u, v\right)_{H^{r,\circ}(-1,1)} :=
\left\{
	\begin{array}{ll}
		\left(u, v\right)_{H^q(-1,1)} + \frac12 \int^1_{-1} u\,\text{d}x \int^1_{-1} v \,\text{d}x &\;\text{ for }\; q> 0, \\
		\left(u, v\right)_{L^2(-1,1)} &\;\text{ for }\; q=0,
	\end{array}
\right.
\end{align*}
where $\frac12\int^1_{-1} u\,\text{d}x \int^1_{-1} v \,\text{d}x $ is added to enforce uniqueness.

\begin{theorem}\label{thrm:approx1d:prelim}
	Let $p\in \mathbb{N}_0$, $q\in \mathbb{N}_0$ with $p \ge q$ and $hp<1$. Then,
	\[
		| (I-T_{p,h}^{q,per})u  |_{H^{q}(-1,1)} \le \sqrt{2} h | u |_{H^{q+1}(-1,1)} \quad \forall u\in H^{q+1}_{per}(-1,1).
	\]
\end{theorem}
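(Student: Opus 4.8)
The plan is to prove the estimate via Fourier analysis on the periodic domain, exploiting the tensor-diagonal structure that the Fourier basis induces on both the periodic Sobolev norms and the periodic spline space.

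First I would set up the Fourier framework: on $(-1,1)$ with period $2$, expand $u \in H^{q+1}_{per}(-1,1)$ as $u(x) = \sum_{k\in\mathbb Z} \hat u_k e^{i\pi k x}$. The key point is that the modified inner product $(\cdot,\cdot)_{H^{q,\circ}}$ diagonalizes in this basis: one computes $\|u\|_{H^{q,\circ}}^2 = \sum_k \lambda_k^{(q)} |\hat u_k|^2$ with $\lambda_0^{(q)} = 2$ (from the average term when $q>0$) and $\lambda_k^{(q)} = 2\sum_{j=0}^q (\pi k)^{2j}$ for $k\neq 0$ — the extra average term is precisely what makes the $k=0$ mode behave well and gives uniqueness. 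Likewise the seminorm satisfies $|u|_{H^q}^2 = \sum_k 2(\pi k)^{2q}|\hat u_k|^2$.

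Next I would reduce the projection error to a mode-by-mode statement. Since $T_{p,h}^{q,per}$ is the $H^{q,\circ}$-orthogonal projection onto $S_{p,h}^{per}(-1,1)$, by orthogonality $|(I-T_{p,h}^{q,per})u|_{H^q} \le |(I-\Pi)u|_{H^q}$ for \emph{any} operator $\Pi$ mapping into the spline space; more carefully, since the $H^{q,\circ}$ projection is quasi-optimal in the $H^{q,\circ}$-norm (in fact optimal), $\|(I-T_{p,h}^{q,per})u\|_{H^{q,\circ}} \le \|(I-\Pi)u\|_{H^{q,\circ}}$, and one wants to pick $\Pi$ cleverly. The natural choice is a Fourier-truncation-type operator or, better, the operator that in the periodic setting the paper \cite{takacs2016approximation} already analyzed for $q=0$ — here the refinement needed is that the same spectral-symbol argument works uniformly in $q$ provided $p\ge q$ (so the spline space is rich enough that its lowest-degree behavior still captures $H^{q+1}$ functions) and $hp<1$. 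The heart is a symbol computation: the periodic spline space $S_{p,h}^{per}$ is spanned by translates of a single B-spline, so $(I-T)$ acting on a trigonometric monomial can be analyzed via the discrete symbol of the B-spline; one shows the error symbol is bounded by $2h^2(\pi k)^2$ relative to the $H^q$ weight, uniformly in the mode $k$ and in $q\le p$, which yields exactly the factor $\sqrt2\, h$ when summing over modes.

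The main obstacle I anticipate is handling the constant ($k=0$) mode and the low modes correctly so that the clean constant $\sqrt2$ (independent of $p$ and $q$) emerges, rather than a $q$- or $p$-dependent constant. The average-correction term in $(\cdot,\cdot)_{H^{q,\circ}}$ is designed to fix the $k=0$ mode, but one must check that it does not spoil the quasi-optimality estimate in the seminorm $|\cdot|_{H^q}$ (note the theorem is stated in the seminorm, not the full norm). I would address this by showing the chosen comparison operator $\Pi$ reproduces constants (so the $k=0$ error is zero in the seminorm anyway) and reduces the claim to $k\ne0$ modes, where the weights $\lambda_k^{(q)}$ and $(\pi k)^{2q}$ are comparable up to the needed factors. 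A secondary technical point is making the B-spline symbol estimate genuinely uniform in $q$ for all $q\le p$: here I would use that raising $q$ only adds higher-order terms to both the error numerator and the norm denominator, and an induction or a direct termwise comparison shows the ratio is controlled by the $q=1$-type estimate, i.e.\ by $h^2$ times the next seminorm, which is the crux of why \cite{takacs2016approximation}'s $L^2$-to-$H^1$ machinery lifts to the $H^q$-to-$H^{q+1}$ setting.
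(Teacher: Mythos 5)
Your Fourier route has a genuine gap at its central step: the orthogonal projection onto $S_{p,h}^{per}(-1,1)$ is \emph{not} diagonal in the Fourier basis. Because the periodic spline space is only invariant under shifts by $h$ (not by arbitrary translations), the projection of a single exponential $e^{i\pi kx}$ is a spline whose Fourier content lives on the whole aliasing class $\{k+2mj:\ j\in\mathbb Z\}$, and the errors produced by two different input modes in the same aliasing class are not orthogonal. So there is no scalar ``error symbol per mode'' to bound, and the step ``summing over modes'' (which implicitly uses Parseval across the error contributions of different input frequencies) does not go through. What actually exists is a block-diagonal structure over aliasing classes, and extracting a sharp, $p$- and $q$-uniform constant from the resulting infinite matrices per class is the hard content of the $n$-width/spectral literature — nothing in your sketch addresses it. Relatedly, your proposed comparison operator is never pinned down: a Fourier truncation does not map into the spline space, and ``the operator already analyzed for $q=0$'' begs the question of how its analysis lifts to $q\ge 1$. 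Your claim that uniformity in $q$ follows because ``raising $q$ only adds higher-order terms to numerator and denominator'' is also unsubstantiated, since the $H^{q,\circ}$-projections for different $q$ are genuinely different operators and their errors are not termwise comparable.

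For contrast, the paper avoids Fourier analysis entirely and argues by induction on $q$: the base case $q=0$ is quoted from the known $L^2$--$H^1$ periodic estimate of \cite{takacs2016approximation}; for the inductive step one applies the hypothesis (with degree $p-1$, order $q-1$) to $u'$, integrates the resulting spline approximation $v_h=T^{q-1,per}_{p-1,h}u'$ to obtain a candidate $u_h$ of degree $p$, verifies periodicity of $u_h$ via Galerkin orthogonality of $(I-T^{q-1,per}_{p-1,h})u'$ against the constant function, and concludes by minimality of the projector. This antidifferentiation device is precisely the mechanism that transports the $q=0$ estimate to arbitrary $q\le p$ with an unchanged constant, and it is the ingredient your proposal is missing. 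If you want to salvage the Fourier approach you would need the full aliasing-class analysis; the elementary induction is far shorter and is what delivers the stated constant $\sqrt2\,h$.
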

\begin{proof}
	We use induction with respect to $q$.
	
	\emph{Proof for $q=0$.}
	\cite[Lemma~4.1]{takacs2016approximation} gives an approximation error estimate for the $H^{1,\circ}$-orthogonal projection
	of $u$ into $S_{p,h}^{per}$ for $p\ge1$. Because $T_{p,h}^{0,per}$ minimizes the $L^2$-norm, we obtain
    \[
        \| (I-T_{p,h}^{0,per})u  \|_{L^2} 
        \le \| (I-T_{p,h}^{1,per})u  \|_{L^2} \le \sqrt{2} h | u |_{H^1} \quad \forall\, u \in H^1_{per}(-1,1),
    \]
    i.e., the desired result.
    For $p=0$, we observe that there are no periodicity conditions for the space $S_{p,h}^{per}$. The desired result
    on approximation by piecewise constants is standard and can be found, e.g, in \cite[Theorem~6.1]{schumaker2007spline}.

    \emph{Proof for $q>0$.}
    We already know that the induction hypothesis holds true for $q-1$, i.e., we have
    \begin{equation}\label{eq:approx1d:1}
        | u - T_{p-1,h}^{q-1,per} u |_{H^{q-1}(-1,1)} \le \sqrt{2} h | u |_{H^{q}(-1,1)} \quad \forall\, u \in H^{q}_{per}(-1,1).
    \end{equation}

    As a next step we show that for all $u\in H^{q+1}_{per}(-1,1)$, there is a $u_h \in S_{p,h}^{per}$ such that 
	\begin{equation}\label{eq:approx1d:2}
		|u-u_h|_{H^{q}(-1,1)} \le \sqrt{2} h | u |_{H^{q+1}(-1,1)}.
	\end{equation}
	
	By plugging $u'$ into~\eqref{eq:approx1d:1}, we immediately obtain
    \[
        | u' - T_{p-1,h}^{q-1,per} u' |_{H^{q-1}(-1,1)} \le \sqrt{2} h  | u |_{H^{q+1}(-1,1)} \quad \forall\, u \in H^{q+1}_{per}(-1,1).
    \]
	Let $v_h:=T_{p-1,h}^{q-1,per} u'$ and define $u_h(x) := \int_{-1}^x  v_h(\xi) \text d\xi+\gamma$, where $\gamma\in \mathbb{R}$ such that
	$\int_{-1}^1u_h(x)\text dx=0$. For this choice, we obtain the desired
	estimate~\eqref{eq:approx1d:2}. It remains to show $u_h \in S_{p,h}^{per}$. As we have
	$v_h\in  S_{p-1,h}^{per}$, we obtain that $u_h$ is a spline of degree $p$. The continuity
	estimates
	\[
			u_h^{(l)} (-1) = u_h^{(l)} (1)  \quad\mbox{for } l=1,\ldots,p-1
	\]
	follow directly from $v_h^{(l)} (-1) = v_h^{(l)} (1)  \quad\mbox{for } l=0,\ldots,p-2$. So, it remains
	to show $u_h(-1) = u_h(1)$. Note that, as $u$ is periodic, we have
    \begin{align*}
		u_h(-1) - u_h(1)  &=  (u(1)-u(-1)) - (u_h(1) - u_h(-1)) = \int_{-1}^1 u'(x)-u_h'(x) \text dx  \\&= \int_{-1}^1 v(x)-v_h(x)  \text dx = ((I-T_{p-1,h}^{q-1,per})v,1)_{L^2(-1,1)}.
    \end{align*}
    Note that $(\cdot,1)_{H^{r,\circ}(-1,1)}= (\cdot,1)_{L^2(-1,1)}$ for any $r$, so we obtain
    \[
            u_h(-1) - u_h(1) = ((I-T_{p-1,h}^{q-1,per})v,1)_{H^{q-1,\circ}(-1,1)}
    \]
    and finally, as $1\in S_{p-1,h}^{per}$, Galerkin orthogonality shows that this term is $0$.  
    So, we have shown $u_h \in S_{p,h}^{per}$ and~\eqref{eq:approx1d:2}. As the projector $T_{p,h}^{q,per}$ minimizes
    the $H^q$-seminorm, we obtain
    \[
        | (I-T_{p,h}^{q,per})u  |_{H^{q}(-1,1)} \le |u-u_h|_{H^{q}(-1,1)} \le \sqrt{2} h | u |_{H^{q+1}(-1,1)},
    \]
    i.e., the desired result.
\end{proof}

\subsection{Approximation error estimates for the univariate case}

Now, we derive approximation error estimates for univariate splines that satisfy the desired boundary
conditions. First, we consider the approximation of functions in the
Sobolev space of functions with vanishing even derivatives (and function values) on the boundary, given by
\[
H^{q}_{D} (0,1):= \left\lbrace u \in H^q(0,1)  \,:\, u^{\left(2l\right)}\left(0\right)=u^{\left(2l\right)}\left( 1\right) = 0, \quad \forall\, l\in\mathbb{N}_0\text{ with } 2l< q \right\rbrace,
\]
by functions in a corresponding spline space, given by
\[
S^{D,0}(0,1) :=\left\lbrace u\in S(0,1) : u^{\left(2l\right)}\left(0\right) = u^{\left(2l\right)}\left(1\right) = 0\quad \forall\, l\in\mathbb{N}_0\text{ with } 2l< p\right\rbrace.
\]
Now, we define $\Pi^{D,0}$ to be the $H^2$-orthogonal projection from $H^2_D(0,1)$ into $S^{D,0}(0,1)$. This projector
satisfies the following error estimate.

\begin{theorem}
\label{theorem:appNT2}
Let  $p\in \mathbb{N}$  with $p \ge 3$ and $hp<1$. Then,
\begin{equation*}
    | (I-\Pi^{D,0}) u |_{H^2(0,1)} \le 2 h^2 | u |_{H^4(0,1)} \quad \forall u\in H^4_D(0,1)
\end{equation*}
\end{theorem}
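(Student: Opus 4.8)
The natural strategy is to mimic the proof of Theorem~\ref{thrm:approx1d:prelim}, transferring the periodic result on $(-1,1)$ to the Dirichlet-type boundary conditions on $(0,1)$ via an odd reflection across $0$ (and, by periodicity, across $1$). First I would observe that a function $u\in H^4_D(0,1)$, whose even derivatives up to order $2$ (i.e., $u$ itself and $u''$) vanish at $0$ and $1$, extends by \emph{odd} reflection to a function $\widetilde u\in H^4_{per}(-1,1)$: the odd extension is $H^q$-smooth precisely because the even-order derivatives vanish at the reflection point, and it is periodic on $(-1,1)$ because $u$ and its relevant derivatives vanish at $1$ as well. Conversely, the restriction to $(0,1)$ of any function in $S^{per}_{p,h}(-1,1)$ that is odd about $0$ lies in $S^{D,0}(0,1)$.

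\textbf{Key steps.}
First, set up the reflection operator $R$ taking $u$ on $(0,1)$ to its odd extension on $(-1,1)$, and check that $R$ maps $H^4_D(0,1)$ into $H^4_{per}(-1,1)$ with $|Ru|_{H^q(-1,1)}^2 = 2|u|_{H^q(0,1)}^2$ for the relevant $q$ (this is where the factor $\sqrt2$ in Theorem~\ref{thrm:approx1d:prelim} gets absorbed). Second, apply Theorem~\ref{thrm:approx1d:prelim} with $q=2$ to $\widetilde u := Ru$: there is $w_h := T^{2,per}_{p,h}\widetilde u\in S^{per}_{p,h}(-1,1)$ with $|\widetilde u - w_h|_{H^2(-1,1)}\le \sqrt2\,h\,|\widetilde u|_{H^3(-1,1)}$ — but this is only one power of $h$, so a single application is not enough. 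Instead I would iterate: apply the estimate once to get an $H^2$-to-$H^3$ bound, and then bootstrap by differentiating, exactly as in the inductive step of Theorem~\ref{thrm:approx1d:prelim}, to pick up the second power of $h$ and land on $|u|_{H^4}$. Concretely, one applies the periodic theorem to $\widetilde u''$ (with parameters shifted down by two) to approximate $\widetilde u''$ in $H^0$ by a spline of degree $p-2$ within $\sqrt2\,h$ times $|\widetilde u''|_{H^1}$, integrates twice to build a candidate spline $\widetilde u_h$ of degree $p$ on $(-1,1)$, and verifies the integration constants can be chosen so that $\widetilde u_h$ is still periodic and odd about $0$ (the oddness forces the two integration constants, the periodicity is checked by a Galerkin-orthogonality argument against constants and linears as in the earlier proof). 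Third, symmetrize: replace $\widetilde u_h$ by its odd part, which does not increase the $H^2$-seminorm error and guarantees $\widetilde u_h|_{(0,1)}\in S^{D,0}(0,1)$. Fourth, use that $\Pi^{D,0}$ is the $H^2$-orthogonal projection, hence best-approximation in the $H^2$-seminorm over $S^{D,0}(0,1)$, to conclude $|(I-\Pi^{D,0})u|_{H^2(0,1)} \le |u - \widetilde u_h|_{H^2(0,1)} \le \tfrac1{\sqrt2}|\widetilde u - \widetilde u_h|_{H^2(-1,1)}$, and chase the constants: each of the two integration steps contributes a factor $\sqrt2\,h$, and the two reflection normalizations contribute a $1/\sqrt2$, giving $2h^2|u|_{H^4(0,1)}$.

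\textbf{Main obstacle.}
The delicate point is the compatibility bookkeeping of the extension-and-reconstruction: one must be sure that the odd extension genuinely lands in $H^4_{per}(-1,1)$ (which needs $u(0)=u(1)=0$ and $u''(0)=u''(1)=0$ — exactly the definition of $H^4_D$, and also exactly why the \emph{odd} rather than even reflection is the right one), and, on the spline side, that the twice-integrated approximant of $\widetilde u''$ can be made simultaneously periodic, odd about $0$, and of the correct smoothness $C^{p-1}$ — the periodicity again reducing to Galerkin orthogonality of the degree-$(p-2)$ residual against $1$, and the oddness fixing the free constants. Everything else is a routine repetition of the machinery already developed for Theorem~\ref{thrm:approx1d:prelim}; the only genuinely new ingredient is the reflection correspondence $H^q_D(0,1)\leftrightarrow\{\text{odd functions in }H^q_{per}(-1,1)\}$ and its spline analogue.
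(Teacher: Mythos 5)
Your overall strategy is the same as the paper's: extend $u$ by odd reflection to $w(x)=\operatorname{sign}(x)\,u(|x|)\in H^4_{per}(-1,1)$, approximate in the periodic setting, check that the approximant is odd so that its restriction lies in $S^{D,0}(0,1)$, and conclude by the best-approximation property of $\Pi^{D,0}$. The reflection bookkeeping (the two factors of $\sqrt2$ cancelling) and the symmetrization step are fine. However, the central step --- how you obtain the factor $h^2$ together with $|u|_{H^4}$ on the right-hand side --- is not correct as described. You propose to approximate $\widetilde u''$ in $L^2$ by a degree-$(p-2)$ periodic spline to within $\sqrt2\,h\,|\widetilde u''|_{H^1}$ and then integrate twice, asserting that ``each of the two integration steps contributes a factor $\sqrt2\,h$.'' Integration contributes no powers of $h$: if $\widetilde u_h''=v_h$, then $|\widetilde u-\widetilde u_h|_{H^2(-1,1)}=\|\widetilde u''-v_h\|_{L^2(-1,1)}\le \sqrt2\,h\,|\widetilde u|_{H^3(-1,1)}$, which is one power of $h$ and lands on $H^3$, not $2h^2|u|_{H^4}$. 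The construction you describe is just a re-run of the inductive step inside Theorem~\ref{thrm:approx1d:prelim}, and that step gains exactly one order per application, never two.

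The missing idea is the projector-composition identity
\begin{equation*}
  I-T^{2,per}_{p,h}=\bigl(I-T^{2,per}_{p,h}\bigr)\bigl(I-T^{3,per}_{p,h}\bigr),
\end{equation*}
which holds because both projectors have the \emph{same} range $S^{per}_{p,h}(-1,1)$ (they differ only in the inner product used), so $(I-T^{2,per}_{p,h})T^{3,per}_{p,h}=0$. With this identity one applies Theorem~\ref{thrm:approx1d:prelim} twice at consecutive Sobolev levels: first with $q=2$ to the function $(I-T^{3,per}_{p,h})w\in H^3_{per}(-1,1)$, giving a factor $\sqrt2\,h$ and the seminorm $|(I-T^{3,per}_{p,h})w|_{H^3}$, and then with $q=3$ to $w$ itself, giving another $\sqrt2\,h$ and $|w|_{H^4}$. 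That is where $2h^2|w|_{H^4(-1,1)}$ comes from. If you want to salvage your ``approximate $\widetilde u''$ and integrate'' route, you would need the two-order estimate $\|(I-T^{0,per}_{p-2,h})\widetilde u''\|_{L^2}\le 2h^2|\widetilde u''|_{H^2}$, which again requires the same composition trick (or a duality argument) at the lower order --- it does not follow from a single application of the one-order estimate plus integration.
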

\begin{proof}
    Assume $u\in H^4_D(0,1)$ to be arbitrary but fixed. Define $w$ on $(-1,1)$ by
    \[
            w(x) := \mbox{sign} (x)\,u( |x| )
    \]
    and observe that $w \in H^4_{per}(-1,1)$. Using Theorem~\ref{thrm:approx1d:prelim}, we obtain
    \begin{align*}
        | (I-T_{p,h}^{2,per}) w |_{H^2(-1,1)} &= | (I-T_{p,h}^{2,per})(I-T_{p,h}^{3,per}) w |_{H^2(-1,1)} \\
&\leq \sqrt{2}h| (I-T_{p,h}^{3,per}) w |_{H^3(-1,1)}\le {2} h^2 | w |_{H^4(-1,1)}.
    \end{align*}

    First observe that $|w|_{H^4(-1,1)} = \sqrt{2} |u|_{H^4(0,1)}$. Define $w_h:= T_{p,h}^{2,per} w$ and
    observe that we obtain $ w_h(x)=-w_h(-x)$ using a standard symmetry argument. This implies that $u_h$,
    the restriction of $w_h$ to  $(0,1)$, satisfies $u_h\in S^{D,0}$. Moreover, we have
   $|w-w_h|_{H^2(-1,1)} = \sqrt{2} |u-u_h|_{H^2(0,1)}$ and, as a consequence,
    \[
        | u-u_h |_{H^2(0,1)} \le 2 h^2 | u |_{H^4(0,1)}.
    \]
    As the projector $\Pi^{D,0}$ minimizes the $H^2$-seminorm, the desired result follows.
\end{proof}

Now, we consider the boundary conditions of interest for the first biharmonic problem. Here, the continuous space is $H^2_0(0,1)$
and the discretized space is $S^0(0,1)$, given by
\[
	S^0(0,1) := \left\lbrace u \in S(0,1)  \,:\, u(0)=u'(0)=u(1)=u'(1)=0\right\rbrace = S(0,1)\cap H^2_0(0,1).
\]
Now, we define $\Pi^{0}$ to be the $H^2$-orthogonal projection from $H^2_0(0,1)$ into $S^{0}(0,1)$. This projector
satisfies the following error estimate.

\begin{theorem}\label{thrm:appr:c:2}
    Let  $p\in \mathbb{N}$  with $p \ge 3$ and $hp<1$. Then,
    \[
        | (I-\Pi^0) u |_{H^2(0,1)} \le 2 h^2 |u|_{H^4(0,1)}\quad \forall\, u\in H^4(0,1)\cap H^2_0(0,1).
    \]
\end{theorem}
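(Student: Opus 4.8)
The plan is to reduce the claim to a pure $L^2$-approximation estimate for the second derivative, and to prove that estimate by an odd reflection to the periodic case, in the spirit of the proof of Theorem~\ref{theorem:appNT2}. Since $\Pi^0$ realizes the best approximation from $S^0(0,1)$ with respect to the $H^2$-seminorm, it suffices to exhibit one $u_h\in S^0(0,1)$ with $|u-u_h|_{H^2(0,1)}\le 2h^2|u|_{H^4(0,1)}$.

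First I would record that, for $u\in H^4(0,1)\cap H^2_0(0,1)$, integration by parts together with $u(0)=u'(0)=u(1)=u'(1)=0$ gives $\int_0^1 u''\,\mathrm{d}x=0$ and $\int_0^1 x\,u''(x)\,\mathrm{d}x=0$, i.e.\ $z:=u''$ is $L^2(0,1)$-orthogonal to $\mathcal P^1$. Conversely, for any $t\in S_{p-2,h}(0,1)$ which is $L^2(0,1)$-orthogonal to $\mathcal P^1$, the function $x\mapsto\int_0^x\int_0^\sigma t$ lies in $S^0(0,1)$: it is a spline of degree $p$, it and its first derivative vanish at $0$, and the two orthogonality relations force the same at $1$. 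Hence I would take $z_h\in S_{p-2,h}(0,1)$ to be the $L^2(0,1)$-orthogonal projection of $z$, and $u_h(x):=\int_0^x\int_0^\sigma z_h$. Because $\mathcal P^1\subset S_{p-2,h}(0,1)$ and $z$ is orthogonal to $\mathcal P^1$, so is $z_h$; thus $u_h\in S^0(0,1)$, $u_h''=z_h$, and
\[
|u-u_h|_{H^2(0,1)}=\|z-z_h\|_{L^2(0,1)}=\operatorname{dist}_{L^2(0,1)}\!\big(z,\,S_{p-2,h}(0,1)\big)
\]
(in fact $u_h=\Pi^0u$). It remains to show $\operatorname{dist}_{L^2(0,1)}\big(z,\,S_{p-2,h}(0,1)\big)\le 2h^2|z|_{H^2(0,1)}$ for every $z\in H^2(0,1)$, since $|z|_{H^2(0,1)}=|u|_{H^4(0,1)}$.

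This last step is the heart of the proof. I would subtract from $z$ its affine interpolant $q\in\mathcal P^1\subset S_{p-2,h}(0,1)$ at $x=0$ and $x=1$, so that $g:=z-q$ satisfies $g(0)=g(1)=0$ and $|g|_{H^2(0,1)}=|z|_{H^2(0,1)}$; extend $g$ to $(-1,1)$ by odd reflection, $\tilde g(x):=\operatorname{sign}(x)\,g(|x|)$, which then belongs to $H^2_{per}(-1,1)$ with $|\tilde g|_{H^2(-1,1)}=\sqrt2\,|g|_{H^2(0,1)}$; and set $g_h:=(T_{p-2,h}^{0,per}\tilde g)|_{(0,1)}\in S_{p-2,h}(0,1)$. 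Since the mesh on $(-1,1)$ is symmetric about $0$, the projection $T_{p-2,h}^{0,per}$ (the $L^2(-1,1)$-orthogonal projection onto $S_{p-2,h}^{per}(-1,1)$) preserves oddness, so $\|\tilde g-T_{p-2,h}^{0,per}\tilde g\|_{L^2(-1,1)}=\sqrt2\,\|g-g_h\|_{L^2(0,1)}$. Using the identity $(I-T_{p-2,h}^{0,per})(I-T_{p-2,h}^{1,per})=I-T_{p-2,h}^{0,per}$ (valid because $T_{p-2,h}^{1,per}$ maps into $S_{p-2,h}^{per}(-1,1)$, which $T_{p-2,h}^{0,per}$ fixes) and Theorem~\ref{thrm:approx1d:prelim} twice (with $q=0$ and $q=1$; admissible since $p-2\ge1$ and $h(p-2)<1$),
\[
\|(I-T_{p-2,h}^{0,per})\tilde g\|_{L^2(-1,1)}
=\|(I-T_{p-2,h}^{0,per})(I-T_{p-2,h}^{1,per})\tilde g\|_{L^2(-1,1)}
\le\sqrt2\,h\,|(I-T_{p-2,h}^{1,per})\tilde g|_{H^1(-1,1)}
\le 2h^2\,|\tilde g|_{H^2(-1,1)}.
\]
Chaining the three relations yields $\|g-g_h\|_{L^2(0,1)}\le 2h^2|g|_{H^2(0,1)}=2h^2|z|_{H^2(0,1)}$, and since $q+g_h\in S_{p-2,h}(0,1)$ we obtain $\operatorname{dist}_{L^2(0,1)}\big(z,S_{p-2,h}(0,1)\big)\le\|z-(q+g_h)\|_{L^2(0,1)}=\|g-g_h\|_{L^2(0,1)}\le 2h^2|z|_{H^2(0,1)}$.

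I expect the delicate point to be the exact tracking of constants in the last step: the clean factor $2h^2$ relies on using an \emph{odd} reflection, so that the $\sqrt2$ lost when restricting back to $(0,1)$ is precisely compensated by the $\sqrt2$ in $|\tilde g|_{H^2(-1,1)}=\sqrt2\,|g|_{H^2(0,1)}$, and on correcting by only an affine polynomial, which already lies in $S_{p-2,h}(0,1)$ for every admissible degree $p\ge3$. The rest is routine.
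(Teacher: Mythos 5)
Your proof is correct, but it takes a genuinely different route from the paper's. The paper stays entirely inside the $H^2$-projection framework: it introduces the intermediate space $S^*=S\cap H^1_0(0,1)$ and its $H^2$-projector $\Pi^*$, reduces to the already-proven estimate for $S^{D,0}$ (Theorem~\ref{theorem:appNT2}) by subtracting the explicit cubics $\phi_1 u''(0)-\phi_2 u''(1)$ (which cost nothing in the $H^4$-seminorm), and then shows---via the test functions $\psi_1,\psi_2$ and Galerkin orthogonality---that for $u\in H^2_0(0,1)$ the projection $\Pi^*u$ automatically satisfies $(\Pi^*u)'(0)=(\Pi^*u)'(1)=0$, so that $\Pi^*=\Pi^0$ on $H^2_0(0,1)$. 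You instead exploit the exact isometry between $\bigl(S^0(0,1),|\cdot|_{H^2}\bigr)$ and the subspace of $S_{p-2,h}(0,1)$ that is $L^2$-orthogonal to $\mathcal{P}^1$, converting the claim into an unconstrained $L^2$-approximation estimate for $z=u''$ by splines of degree $p-2$, which you then prove by an affine correction followed by odd reflection and two applications of Theorem~\ref{thrm:approx1d:prelim}---the same reflection-plus-composition device the paper uses in Theorems~\ref{theorem:appNT2} and~\ref{thrm:approx1d}. I checked the individual steps: the orthogonality of $u''$ to $\mathcal{P}^1$ from the boundary conditions, the fact that the double primitive of such a spline lands in $S^0(0,1)$, the oddness-preservation of $T^{0,per}_{p-2,h}$ on the mesh symmetric about $0$, the admissibility conditions $p-2\ge 1$ and $h(p-2)<1$, and the bookkeeping of the $\sqrt2$ factors; the constant $2h^2$ comes out exactly as claimed. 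What your route buys is independence from Theorem~\ref{theorem:appNT2} and from the somewhat delicate argument that $\Pi^*$ preserves the derivative boundary conditions; what the paper's route buys is reuse of the $S^{D,0}$ machinery that is needed anyway for the stable splitting in Section~\ref{sec:splitting}, plus the structural by-product $\Pi^*=\Pi^0$ on $H^2_0(0,1)$.
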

\begin{proof}
	First, define $S^*(0,1) :=S\cap H^1_0(0,1)$ and $\Pi^*:H^2_D(0,1)\rightarrow S^*$
	to be the $H^2$-orthogonal projector into the corresponding space.
Since $S^{D,0}\subset S^*$, Theorem~\ref{theorem:appNT2} directly implies
\begin{align*}
    |\left(I-\Pi^*\right)w|_{H^2(0,1)}  \leq 2 h^2 \vert w\vert_{H^4(0,1)}
		\quad \forall\, w\in H^{4}_D(0,1).
\end{align*}
Now let $u\in H^{4}(0,1) \cap H^1_0(0,1)$ be arbitrary but fixed. Observe that for
\[
	w(x):=u(x) + \underbrace{\frac{1}{6}(x^3-3x^2+2x)}_{\displaystyle \phi_1(x):=} u''(0) - 
		\underbrace{\frac{1}{6} (x^3-x)}_{\displaystyle \phi_2(x):=} u''(1) ,
\]
we obtain $w\in H^4_D(0,1)$. Note that $\phi_1,\phi_2\in S^*$ and  $|\phi_1|_{H^4(0,1)}=|\phi_2|_{H^4(0,1)}=0$.
So,
\begin{align*}
	|\left(I-\Pi^*\right)u|_{H^2(0,1)}  &= \inf_{u^*\in S^*}|u-u^*|_{H^2(0,1)} \\
		& = \inf_{w^*\in S^*}|w-w^*|_{H^2(0,1)} \leq 2 h^2 \vert w\vert_{H^4(0,1)} =  2 h^2 \vert u\vert_{H^4(0,1)}.
\end{align*}
Now, consider the function $\psi_1(x):=\tfrac12(x^2-x)$ and observe $\psi_1\in S^*$ and
\begin{equation}\label{eq:bc1}
	0 = (\left(I-\Pi^*\right)u,\psi_1)_{H^2(0,1)} = [\left(I-\Pi^*\right)u]'(1) - [\left(I-\Pi^*\right)u]'(0).
\end{equation}
As $\left(I-\Pi^*\right)u\in H^1_0$, we obtain
\[
	0 = [\left(I-\Pi^*\right)u](1) - [\left(I-\Pi^*\right)u](0) = (\left(I-\Pi^*\right)u,\psi_2'')_{H^1(0,1)},
\]
where $\psi_2(x):=\frac{1}{6} (x^3-x)$.
Integration by parts and $\psi_2''(0)=0$ yields
\[
	0 = (\left(I-\Pi^*\right)u,\psi_2'')_{H^1(0,1)} = - (\left(I-\Pi^*\right)u,\psi_2)_{H^2(0,1)} + [[\left(I-\Pi^*\right)u]' \psi_2''](1).
\]
As $\psi_2\in S^*$, Galerkin orthogonality yields $- (\left(I-\Pi^*\right)u,\psi_2)_{H^2(0,1)} =0$,
so we have
$
	[\left(I-\Pi^*\right)u]'(1) = 0
$.
This implies, in combination with~\eqref{eq:bc1}, that
\[
	u'(1) = (\Pi^*u)' (1)
	\quad \mbox{and} \quad
	u'(0) = (\Pi^*u)' (0)
\]
holds.
So, for any $u\in H^2_0(0,1)$, we have $\Pi^*u\in S^*\cap H^2_0=S^0$. As $\Pi^0$ minimizes the same
norm, we obtain for any $u\in H^2_0(0,1)$ that $\Pi^*u=\Pi^0u$, so also the projector $\Pi^0$
satisfies the desired error estimate.
\end{proof}

\begin{theorem}\label{thrm:appr:c:4}
    Let  $p\in \mathbb{N}$  with $p \ge 3$ and $hp<1$. Then,
    \[
        \| (I-\Pi^0) u \|_{L^2(0,1)} \le 2 h^2 |u|_{H^2(0,1)}\quad \forall\, u\in H^2_0(0,1).
    \]
\end{theorem}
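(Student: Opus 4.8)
The plan is a standard Aubin--Nitsche duality argument, bootstrapping from the $H^2$-to-$H^4$ estimate of Theorem~\ref{thrm:appr:c:2} applied to a dual solution. Throughout, recall that in one dimension $\Delta u = \partial_x^2 u$, so the $H^2$-orthogonal projection $\Pi^0$ is orthogonal with respect to the bilinear form $a(u,v):=(\partial_x^2 u,\partial_x^2 v)_{L^2(0,1)}$, whose induced norm is exactly the seminorm $|\cdot|_{H^2(0,1)}$ (this is the form minimized in the proof of Theorem~\ref{thrm:appr:c:2}). Set $e:=(I-\Pi^0)u\in H^2_0(0,1)$; the Galerkin orthogonality $a(e,v_h)=0$ for all $v_h\in S^0(0,1)$ is then immediate.

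First I would introduce the dual function $w\in H^2_0(0,1)$ defined by $a(w,v)=(e,v)_{L^2(0,1)}$ for all $v\in H^2_0(0,1)$. This is well posed: $a(\cdot,\cdot)$ is an inner product on $H^2_0(0,1)$ inducing a norm equivalent to $\|\cdot\|_{H^2(0,1)}$ (by a Poincar\'e-type inequality using $w(0)=\partial_x w(0)=0$), so $w$ is the Riesz representative of the bounded functional $v\mapsto(e,v)_{L^2(0,1)}$. Testing the defining equation against $v\in C_c^\infty(0,1)$ shows $\partial_x^4 w=e$ in the sense of distributions; hence $\partial_x^2 w\in H^2(0,1)$, i.e.\ $w\in H^4(0,1)\cap H^2_0(0,1)$, with the sharp identity $|w|_{H^4(0,1)}=\|\partial_x^4 w\|_{L^2(0,1)}=\|e\|_{L^2(0,1)}$.

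The duality chain then reads
\[
\|e\|_{L^2(0,1)}^2 = (e,e)_{L^2(0,1)} = a(w,e) = a(w-\Pi^0 w,\,e) \le |w-\Pi^0 w|_{H^2(0,1)}\,|e|_{H^2(0,1)},
\]
where the third equality is the defining property of $w$ with test function $v=e$, the fourth uses $\Pi^0 w\in S^0(0,1)$ together with the Galerkin orthogonality and the symmetry of $a$, and the last step is the Cauchy--Schwarz inequality. On the first factor I would apply Theorem~\ref{thrm:appr:c:2} — legitimate since $w\in H^4(0,1)\cap H^2_0(0,1)$ and $p\ge3$, $hp<1$ — to obtain $|w-\Pi^0 w|_{H^2(0,1)}\le 2h^2|w|_{H^4(0,1)}=2h^2\|e\|_{L^2(0,1)}$; on the second factor I would use that $\Pi^0 u$ is the best approximation to $u$ in $S^0(0,1)$ with respect to $|\cdot|_{H^2(0,1)}$ and that $0\in S^0(0,1)$, so $|e|_{H^2(0,1)}\le|u|_{H^2(0,1)}$. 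Combining and dividing by $\|e\|_{L^2(0,1)}$ (the case $e=0$ being trivial) yields $\|e\|_{L^2(0,1)}\le 2h^2|u|_{H^2(0,1)}$, as claimed.

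I expect the only point needing genuine care to be the regularity of the dual solution $w$. In higher dimensions this would require an elliptic regularity result such as Assumption~\ref{ass:reg}, but in one space dimension the weak equation forces $\partial_x^4 w=e\in L^2(0,1)$ pointwise almost everywhere, so $w\in H^4(0,1)$ and the identity $|w|_{H^4(0,1)}=\|e\|_{L^2(0,1)}$ come for free; moreover the four homogeneous boundary conditions of $w$ are precisely those built into $H^2_0(0,1)$ and $S^0(0,1)$, which is exactly what makes Theorem~\ref{thrm:appr:c:2} applicable to $w$ verbatim. Everything else is routine bookkeeping with Cauchy--Schwarz and the best-approximation property of $\Pi^0$.
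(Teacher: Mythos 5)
Your proposal is correct and follows essentially the same route as the paper: an Aubin--Nitsche duality argument with a dual solution $w$ satisfying $\partial_x^4 w=(I-\Pi^0)u$, the estimate of Theorem~\ref{thrm:appr:c:2} applied to $w$, Galerkin orthogonality, Cauchy--Schwarz, and the $H^2$-stability of $\Pi^0$. The only difference is presentational (you define $w$ via the weak form and recover $\partial_x^4 w=e$ distributionally, whereas the paper posits $v''''=e$ and integrates by parts), which changes nothing of substance.
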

\begin{proof}
This is shown using a classical Aubin Nitsche duality trick. Let $u\in H^2_0(0,1)$ be arbitrary but fixed and choose $v\in H^4(0,1)\cap H^2_0(0,1)$ such that $v'''' = u- \Pi^0 u$.
Using integration by parts and Theorem~\ref{thrm:appr:c:2}, we obtain
\begin{align*}
\| u-\Pi^0u  \|_{L^2(0,1)} &= \frac{\left( u-\Pi^0 u,  u-\Pi^0 u\right)_{L^2(0,1)}}{| u-\Pi^0 u |_{L^2(0,1)}} = \frac{\left( u-\Pi^0 u, v''''\right)_{L^2(0,1)}}{| v |_{H^4(0,1)}}\\
& = \frac{\left( u-\Pi^0 u, v\right)_{H^2(0,1)}}{| v |_{H^4(0,1)}}\leq 2\,h^2 \frac{\left( u-\Pi^0 u, v\right)_{H^2(0,1)}}{| v-\Pi^0v |_{H^2(0,1)}}.
\end{align*}
Galerkin orthogonality gives $\left( u-\Pi^0 u, \Pi^0v\right)_{H^2(0,1)} = 0$. Using this, the Cauchy-Schwarz inequality and this $H^2$-stability of $\Pi^0$, we finally obtain
\begin{align*}
\| u-\Pi^0u  \|_{L^2(0,1)}  &\leq 2\,h^2 \frac{\left( u-\Pi^0 u, v-\Pi^0 v\right)_{H^2(0,1)}}{| v-\Pi^0v |_{H^2(0,1)}}\\ &\leq 2\,h^2 | u-\Pi^0 u|_{H^2(0,1)}\leq 2\, h^2| u |_{H^2(0,1)},
\end{align*}
which finishes the proof.
\end{proof}

\subsection{Approximation error estimates for the parameter domain}

In this subsection, we derive robust approximation error estimates for the space $S^0(\widehat{\Omega})$.
For this purpose, we define the following projectors on $u\in H^{2}(\widehat{\Omega})$:
\begin{align*}
&(\Pi^{x_k})u(x_1,\ldots,x_{k-1},\cdot,x_{k+1},\ldots,x_{d}) := \Pi^0 u(x_1,\ldots,x_{k-1},\cdot,x_{k+1},\ldots,x_{d})\\
&\hspace{2.6cm}\forall\, (x_1,\ldots,x_{k-1},x_{k+1},\ldots,x_{d})\in(0,1)^{d-1}\quad \text{for} \quad k = 1,\ldots,d .
\end{align*}

\begin{lemma}\label{lem:commute}
The projectors $\Pi^{x_k}$ are commutative; that is,
\[
\Pi^{x_i}\Pi^{x_j} = \Pi^{x_j}\Pi^{x_i} \quad \text{for}\quad i= 1,\ldots,d \quad \text{and} \quad j= 1,\ldots,d.
\]
\end{lemma}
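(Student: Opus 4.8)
The plan is to separate the trivial case $i=j$ from the case $i\ne j$. When $i=j$, the identity $\Pi^{x_i}\Pi^{x_i}=\Pi^{x_i}$ is immediate: for every fixed value of the remaining coordinates, $\Pi^0$ is the $H^2(0,1)$-orthogonal projection onto $S^0(0,1)$, hence idempotent, and idempotency carried through the definition gives the claim. So from now on I assume $i\ne j$; after renaming coordinates it is enough to treat $d=2$, $i=1$, $j=2$, the other $d-2$ variables being frozen throughout.

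The key is to write $\Pi^0$ in a tensor-friendly form. Fix a basis $\phi_1,\dots,\phi_N$ of $S^0(0,1)$ and let $M_{mn}:=(\phi_m,\phi_n)_{H^2(0,1)}$ be its (symmetric, positive definite) Gram matrix; then for $v\in H^2_0(0,1)$,
\[
  \Pi^0 v=\sum_{n=1}^N \ell_n(v)\,\phi_n,\qquad \ell_n(v):=\sum_{m=1}^N (M^{-1})_{nm}\,(v,\phi_m)_{H^2(0,1)},
\]
with bounded linear functionals $\ell_n$. Writing $\ell_n^{(k)}$ for the action of $\ell_n$ on the $k$-th variable (the others being parameters), the definition of $\Pi^{x_k}$ reads $(\Pi^{x_1}u)(x_1,x_2)=\sum_n \ell_n^{(1)}(u)\,\phi_n(x_1)$ and $(\Pi^{x_2}u)(x_1,x_2)=\sum_n \ell_n^{(2)}(u)\,\phi_n(x_2)$.

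Then I would compose the two. Applying $\Pi^{x_2}$ to $\Pi^{x_1}u$ and using that $\ell_m^{(2)}$ is linear and acts trivially on functions of $x_1$ alone,
\[
  (\Pi^{x_2}\Pi^{x_1}u)(x_1,x_2)=\sum_{m,n}\phi_m(x_2)\,\phi_n(x_1)\,\ell_m^{(2)}\!\big(\ell_n^{(1)}(u)\big),
\]
and the reverse composition yields the same double sum with $\ell_m^{(2)}$ and $\ell_n^{(1)}$ interchanged. Hence the lemma reduces to the commutation $\ell_m^{(2)}\ell_n^{(1)}=\ell_n^{(1)}\ell_m^{(2)}$ on functions of $(x_1,x_2)$. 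Since each $\ell_k$ is a fixed finite linear combination of the maps $v\mapsto\int_0^1(v\,\phi_r+v'\,\phi_r'+v''\,\phi_r'')\,\mathrm dx$, this commutation is precisely Fubini's theorem together with the symmetry of the mixed weak derivatives $\partial_{x_1}^a\partial_{x_2}^b=\partial_{x_2}^b\partial_{x_1}^a$ for $a,b\le 2$; morally, it just expresses that $\Pi^{x_1}=\Pi^0\otimes I$ and $\Pi^{x_2}=I\otimes\Pi^0$ act on different tensor factors.

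The one point that needs care is regularity: one must ensure that $\Pi^{x_1}u$ is regular enough for $\Pi^{x_2}$ to be applied to it, and that the interchange of the two families of functionals is legitimate. I would handle this by density, establishing the identity first for $u$ in the algebraic tensor product of smooth univariate functions (or of the spline spaces themselves), where all derivatives exist classically and Fubini is unproblematic, and then extending it to the full space using that each $\phi_n$ is fixed, the $\ell_n$ are bounded, and the projectors $\Pi^{x_k}$ are continuous. Everything else is routine bookkeeping with the tensor-product structure.
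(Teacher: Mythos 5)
Your argument is correct and is essentially the standard tensor-product argument ($\Pi^{x_1}=\Pi^0\otimes I$, $\Pi^{x_2}=I\otimes\Pi^0$ acting on different factors) that the paper does not write out but simply invokes by citing the analogous Lemma~12 of its reference on multi-patch discretizations. The only step to treat with care is your closing continuity/density argument: $\Pi^{x_k}$ is not obviously bounded in the full $H^2(\widehat{\Omega})$ norm (the univariate projection only controls derivatives in the $x_k$ direction), so the extension from the algebraic tensor product is cleanest on the anisotropic space $H^2(0,1)\otimes H^2(0,1)$, where your functionals $\ell_n^{(k)}$ are genuinely bounded and both compositions are manifestly well defined --- which suffices, since the paper only ever applies the composed projectors to functions with that much mixed regularity.
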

\begin{proof}
The proof is completely analogous to that of \cite[Lemma 12]{T:2017MPMG}.
\end{proof}

Let $\widehat{\mathbf{\Pi}}:=\widehat{\mathbf{\Pi}}_{p,h}$ be the $H^2$-orthogonal projection from $H^2_0(\widehat{\Omega})$ into $S^0(\widehat{\Omega})=
S^0_{p,h}(\widehat{\Omega})$.

\begin{theorem}\label{thrm:appr:d:0}
    Let $p\in \mathbb{N}$ with $p\ge 3$ and $hp<1$. Then,
     \begin{equation*}
     |(I-\widehat{\mathbf{\Pi}})u|_{H^2(\widehat{\Omega})} \leq c\,h^2|u|_{H^4(\widehat{\Omega})}\quad \forall\, u\in H^4(\widehat{\Omega}) \cap H^2_0(\widehat{\Omega}).
     \end{equation*}
\end{theorem}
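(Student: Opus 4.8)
The plan is to reduce the multivariate estimate to the univariate estimate from Theorem~\ref{thrm:appr:c:2} by a standard tensor-product telescoping argument, exploiting the commutativity of the one-directional projectors established in Lemma~\ref{lem:commute}. First I would argue that the full tensor-product projector $\mathbf{\Pi}:=\Pi^{x_1}\Pi^{x_2}\cdots\Pi^{x_d}$ maps $H^2_0(\widehat{\Omega})$ into $S^0(\widehat{\Omega})$; this follows because each $\Pi^{x_k}$ preserves the boundary conditions in the other coordinate directions (the one-dimensional projector $\Pi^0$ acts as the identity on the affine boundary data in direction $x_k$, and leaves the other variables as parameters), and by Lemma~\ref{lem:commute} the order does not matter. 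Then, since $\widehat{\mathbf{\Pi}}$ is the $H^2$-orthogonal projection onto $S^0(\widehat{\Omega})$, it is the best approximation in the $H^2$-seminorm (note $|\cdot|_{H^2}$ and $\|\cdot\|_{H^2}$ coincide as norms on $H^2_0$ up to equivalence, or more precisely one uses the $\mathcal B$-inner product which is a genuine norm here), so it suffices to bound $|(I-\mathbf{\Pi})u|_{H^2(\widehat{\Omega})}$.

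The main step is the telescoping identity
\begin{equation*}
I-\mathbf{\Pi} = I - \Pi^{x_1}\cdots\Pi^{x_d} = \sum_{k=1}^d \Pi^{x_1}\cdots\Pi^{x_{k-1}}(I-\Pi^{x_k}),
\end{equation*}
which I would take $H^2$-seminorms of and estimate term by term. For each term, I would use that the already-applied projectors $\Pi^{x_1}\cdots\Pi^{x_{k-1}}$ are $H^2$-stable (being $H^2$-orthogonal projectors in one coordinate, uniformly in the remaining parameters, they are non-expansive in the corresponding one-dimensional $H^2$-norm and hence, by Fubini, bounded on $H^2(\widehat{\Omega})$ with a constant depending only on $d$), and then apply Theorem~\ref{thrm:appr:c:2} fibrewise in the $x_k$-direction: for almost every fixed choice of the other variables, $(I-\Pi^{x_k})$ acting on the $x_k$-fibre is $(I-\Pi^0)$, which is bounded by $2h^2$ times the $H^4$-seminorm of that fibre in $x_k$. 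Integrating the squared estimate over the remaining variables and summing the $d$ contributions gives $|(I-\mathbf{\Pi})u|_{H^2(\widehat{\Omega})}\le c\,h^2\,|u|_{H^4(\widehat{\Omega})}$, since each fibrewise $H^4$-in-one-direction seminorm is controlled by the full $H^4(\widehat{\Omega})$-seminorm.

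The main obstacle I anticipate is the bookkeeping needed to make the fibrewise application of Theorem~\ref{thrm:appr:c:2} rigorous: one must check that for $u\in H^4(\widehat{\Omega})\cap H^2_0(\widehat{\Omega})$ almost every $x_k$-fibre lies in $H^4(0,1)\cap H^2_0(0,1)$ (this is a standard trace/Fubini fact for Sobolev functions, but needs to be invoked carefully), and that composing $H^2$-orthogonal projectors in different directions with the $(I-\Pi^0)$ factor does not spoil measurability or the boundary conditions — this is exactly where Lemma~\ref{lem:commute} and the structure of $S^0(\widehat{\Omega})$ as a tensor product are used. A secondary technical point is converting between the $H^2$-seminorm bound of Theorem~\ref{thrm:appr:c:2} and genuine $H^2$-stability of the intermediate projectors; here one can either work throughout with the equivalent seminorm (using that on $H^2_0$ the $H^2$-seminorm is a norm) or invoke the lower-order approximation estimate Theorem~\ref{thrm:appr:c:4} together with a Poincaré-type inequality to control the full norm. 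Everything else is routine tensor-product bookkeeping.
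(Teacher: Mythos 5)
Your high-level strategy (telescope the tensor-product projector, use commutativity, and apply the univariate estimates fibrewise) is the same skeleton as the paper's proof, but two steps in your proposal do not go through as stated. First, the claimed multivariate $H^2$-stability of the prefix projectors $\Pi^{x_1}\cdots\Pi^{x_{k-1}}$ does not follow ``by Fubini'' from univariate non-expansiveness: $\Pi^{x_j}$ is the $H^2$-orthogonal projector only in the $x_j$-fibres, so it is non-expansive only for the pure second derivative $\partial_{x_jx_j}$. Its action on the remaining components of the multivariate $H^2$-seminorm (obtained by commuting $\Pi^{x_j}$ past $\partial_{x_i}$, $i\ne j$) requires $L^2$- and $H^1$-boundedness of the univariate projector $\Pi^0$, which is \emph{not} an orthogonal projector in those norms; the best one has is $\|\Pi^0 w\|_{L^2}\le\|w\|_{L^2}+2h^2|w|_{H^2}$ from Theorem~\ref{thrm:appr:c:4}, which is not the non-expansiveness you invoke. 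Second, applying Theorem~\ref{thrm:appr:c:2} fibrewise to a telescoping term $(I-\Pi^{x_k})u$ only controls $\|\partial_{x_kx_k}(I-\Pi^{x_k})u\|_{L^2}$; it says nothing about the cross-direction terms $\|\partial_{x_ix_i}(I-\Pi^{x_k})u\|_{L^2}$ (which need the $L^2$-estimate of Theorem~\ref{thrm:appr:c:4} applied to $\partial_{x_ix_i}u$) nor about the mixed derivatives $\|\partial_{x_ix_k}(I-\Pi^{x_k})u\|_{L^2}$, for which no univariate estimate is available in the paper at all.

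The paper resolves both issues simultaneously with a step that is missing from your argument: since the error $(I-\Pi^{x_1}\cdots\Pi^{x_d})u$ lies in $H^2_0(\widehat{\Omega})$, Lemma~\ref{lemma:BdecompEquiv2D} (via~\eqref{eq:mcB:equiv}) reduces its full $H^2$-seminorm to the pure second derivatives $\sum_k\|\partial_{x_kx_k}\cdot\|_{L^2}^2$ only, so the mixed derivatives never have to be estimated. Then, for each fixed direction $x_k$, one uses Lemma~\ref{lem:commute} to reorder the product so that $\Pi^{x_k}$ is applied \emph{first}; the resulting two-term telescoping for $\partial_{x_kx_k}$ uses only the genuine non-expansiveness $\|\partial_{x_kx_k}\Pi^{x_k}v\|_{L^2}\le\|\partial_{x_kx_k}v\|_{L^2}$, Theorem~\ref{thrm:appr:c:2} for the same-direction term, and Theorem~\ref{thrm:appr:c:4} for the cross-direction terms. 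If you incorporate this reduction to the $\bar{\mathcal{B}}$-seminorm and bring Theorem~\ref{thrm:appr:c:4} into the main estimate (rather than as a side remark), your argument becomes the paper's proof.
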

\begin{proof}
For sake of simplicity,
we restrict the proof to the two dimensional case.
Using the triangle inequality and the $H^2$-stability of $\Pi^{x}$, we obtain
\begin{align*}
\|\partial_{xx}(u-\Pi^{x}\Pi^{y} u)\|_{L^2(\widehat{\Omega})} &\leq \|\partial_{xx}(u-\Pi^{x} u)\|_{L^2(\widehat{\Omega})} +\|\partial_{xx}\Pi^{x}(u-\Pi^y u)\|_{L^2(\widehat{\Omega})}\\
&\leq \|\partial_{xx}(u-\Pi^{x} u)\|_{L^2(\widehat{\Omega})} +\|\partial_{xx}(u-\Pi^y u)\|_{L^2(\widehat{\Omega})}.
\end{align*}
Using Theorems~\ref{thrm:appr:c:2} and~\ref{thrm:appr:c:4} and $a+b\le c(a^2+b^2)^{1/2}$, we obtain
\begin{align*}
\|\partial_{xx}(u-\Pi^{x}\Pi^{y} u)\|_{L^2(\widehat{\Omega})}
&\leq c\,h^2\left(\|\partial_{xxxx}u\|^2_{L^2(\widehat{\Omega})} + \|\partial_{xxyy}u\|^2_{L^2(\widehat{\Omega})}\right)^{1/2}.
\end{align*}
Using Lemma~\ref{lem:commute} and the same arguments as above, we obtain
\[
\|\partial_{yy}(u-\Pi^{x}\Pi^{y} u)\|_{L^2(\widehat{\Omega})} \leq c\,h^2\left(\|\partial_{yyxx}u\|^2_{L^2(\widehat{\Omega})} + \|\partial_{yyyy}u\|^2_{L^2(\widehat{\Omega})}\right)^{1/2}.
\]
Using this and Lemma~\ref{lemma:BdecompEquiv2D}, we finally obtain
\begin{align*}
     &|(I-\widehat{\mathbf{\Pi}})u|^2_{H^2(\widehat{\Omega})} 
		\le |\left(I-\Pi^{x}\Pi^{y}\right)u|^2_{H^2(\widehat{\Omega})} 
		= |\left(I-\Pi^{x}\Pi^{y}\right)u|^2_{\mathcal{B}(\widehat{\Omega})}
		\\&\leq 2|\left(I-\Pi^{x}\Pi^{y}\right)u|^2_{\bar{\mathcal{B}}(\widehat{\Omega})}
     = 2\|\partial_{xx}(u-\Pi^{x}\Pi^{y} u)\|^2_{L^2(\widehat{\Omega})} + 2\|\partial_{yy}(u-\Pi^{x}\Pi^{y} u)\|^2_{L^2(\widehat{\Omega})}\\
     &\leq c\,h^4 \left(\|\partial_{xxxx}u\|^2_{L^2(\widehat{\Omega})} + \|\partial_{xxyy}u\|^2_{L^2(\widehat{\Omega})}+ \|\partial_{yyxx}u\|^2_{L^2(\widehat{\Omega})} + \|\partial_{yyyy}u\|^2_{L^2(\widehat{\Omega})} \right) 
		\\&= c\,h^4|u|^2_{H^4(\widehat{\Omega})}.
\end{align*}
\end{proof}

\begin{theorem}\label{thrm:appr:d:1}
    Let $p\in \mathbb{N}$ with $p\ge 3$ and $hp<1$. Then,
     \begin{equation*}
        |(I-\widehat{\mathbf{\Pi}})u|_{H^2(\widehat{\Omega})} \leq c\,h|u|_{H^3(\widehat{\Omega})}\quad \forall\, u\in H^3(\widehat{\Omega}) \cap H^2_0(\widehat{\Omega}).
     \end{equation*}
\end{theorem}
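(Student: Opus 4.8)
The plan is to obtain the $H^3$-estimate of Theorem~\ref{thrm:appr:d:1} from the $H^2$-to-$H^4$ estimate of Theorem~\ref{thrm:appr:d:0} together with the trivial $H^2$-stability estimate $|(I-\widehat{\mathbf\Pi})u|_{H^2(\widehat\Omega)} \le |u|_{H^2(\widehat\Omega)}$, by Hilbert space interpolation. Concretely, the operator $I-\widehat{\mathbf\Pi}$ maps $H^4(\widehat\Omega)\cap H^2_0(\widehat\Omega)$ boundedly into $H^2(\widehat\Omega)$ with norm $\le c\,h^2$ (measured via the $H^4$-seminorm on the source side), and it maps $H^2_0(\widehat\Omega)$ boundedly into $H^2(\widehat\Omega)$ with norm $\le 1$. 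Applying the exact-interpolation property~\eqref{eq:h1} with $\theta=1/2$ yields a bound of the form $c\,h^{2\theta} = c\,h$ on the intermediate space $[H^2_0(\widehat\Omega),\,H^4(\widehat\Omega)\cap H^2_0(\widehat\Omega)]_{1/2}$.

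The key steps, in order, are: first, record the two endpoint estimates just described, taking care that the $H^4$- and $H^2$-seminorms can be replaced by full norms on the source spaces (the seminorms dominate up to the homogeneous boundary conditions, or one simply states the endpoint bounds with full norms, which is all interpolation needs). Second, identify the interpolation space: using~\eqref{eq:hilbsum} with $A_1=H^2(\widehat\Omega)$ and $A_2=H^4(\widehat\Omega)$ gives
\[
  [H^2_0(\widehat\Omega),\,H^4(\widehat\Omega)\cap H^2_0(\widehat\Omega)]_{1/2}
  \hookrightarrow H^2_0(\widehat\Omega)\cap[H^2(\widehat\Omega),H^4(\widehat\Omega)]_{1/2},
\]
and then~\eqref{eq:sobolev} identifies $[H^2(\widehat\Omega),H^4(\widehat\Omega)]_{1/2}=H^3(\widehat\Omega)$; hence the intermediate space is (continuously embedded in) $H^3(\widehat\Omega)\cap H^2_0(\widehat\Omega)$, which is exactly the space in the statement. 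Third, apply~\eqref{eq:h1} to the operator $\Psi:=I-\widehat{\mathbf\Pi}$ with $B_1=B_2=H^2(\widehat\Omega)$, $M_1\le1$, $M_2\le c\,h^2$, $\theta=1/2$, to conclude $|(I-\widehat{\mathbf\Pi})u|_{H^2(\widehat\Omega)}\le c\,h\,\|u\|_{H^3(\widehat\Omega)}$, and finally note that on $H^2_0$ the $H^3$-seminorm controls the full $H^3$-norm (via Poincaré-type inequalities using the vanishing boundary data) so the seminorm $|u|_{H^3(\widehat\Omega)}$ may be used on the right.

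The main obstacle is bookkeeping with the spaces rather than any deep estimate: one must make sure that $\widehat{\mathbf\Pi}$, which is only defined on $H^2_0(\widehat\Omega)$, is the \emph{same} operator on both endpoint spaces (it is, since $H^4\cap H^2_0\subset H^2_0$) so that interpolation of a single operator applies; and one must correctly invoke~\eqref{eq:hilbsum} to pull the intersection with $H^2_0$ out of the interpolation bracket, since naive interpolation would otherwise land in $[H^2_0,H^4\cap H^2_0]_{1/2}$ rather than the cleaner space $H^3\cap H^2_0$. A minor additional point is the passage between seminorms and norms at the endpoints; since all the seminorms involved are equivalent to the corresponding full norms on the relevant subspaces with homogeneous boundary conditions, this costs only a generic constant $c$ and does not affect the powers of $h$.
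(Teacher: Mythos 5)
Your proposal is essentially the paper's own proof: interpolate the operator $I-\widehat{\mathbf{\Pi}}$ between the $H^2$-stability bound and the $c\,h^2|u|_{H^4}$ bound of Theorem~\ref{thrm:appr:d:0} via~\eqref{eq:h1} with $\theta=1/2$ and~\eqref{eq:sobolev}, and this is correct. One small remark: the embedding you display points the wrong way --- what the argument needs (and what~\eqref{eq:hilbsum} actually provides, since it bounds the interpolation norm by the intersection norm) is $H^3(\widehat\Omega)\cap H^2_0(\widehat\Omega)\hookrightarrow[H^2_0(\widehat\Omega),\,H^4(\widehat\Omega)\cap H^2_0(\widehat\Omega)]_{1/2}$, so that every $u$ in the hypothesis class lies in the interpolation space with controlled norm; this does not change the substance of your argument.
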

\begin{proof}
    Theorem~\ref{thrm:appr:d:0} states
     \begin{equation*}
        |(I-\widehat{\mathbf{\Pi}})u|_{H^2(\widehat{\Omega})} \leq c\,h^2 |u|_{H^4(\widehat{\Omega})}\quad \forall\, u\in H^4(\widehat{\Omega}) \cap H^2_0(\widehat{\Omega}),
     \end{equation*}
     and, as $\widehat{\mathbf{\Pi}}$ is stable in $H^2$, we have
     \begin{equation*}
        |(I-\widehat{\mathbf{\Pi}})u|_{H^2(\widehat{\Omega})} \leq |u|_{H^2(\widehat{\Omega})}\quad \forall\, u\in H^2_0(\widehat{\Omega}),
     \end{equation*}
     Using~\eqref{eq:h1} for $\theta=1/2$ and~\eqref{eq:sobolev}, we obtain the desired result.
\end{proof}

\begin{theorem}\label{thrm:appr:d:2}
    Let $p\in \mathbb{N}$ with $p\ge 3$ and $hp<1$. Then,
     \begin{equation*}
     |(I-\widehat{\mathbf{\Pi}})u|_{H^1(\widehat{\Omega})} \leq c\,h|u|_{H^2(\widehat{\Omega})}\quad \forall\, u\in H^2_0(\widehat{\Omega}).
     \end{equation*}
\end{theorem}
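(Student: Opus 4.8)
The plan is an Aubin--Nitsche duality argument that reduces the desired $H^1$-estimate to the $O(h)$ estimate in the $H^2$-seminorm provided by Theorem~\ref{thrm:appr:d:1}, at the price of invoking the regularity Assumption~\ref{ass:reg} once, on the (convex) parameter domain $\widehat{\Omega}$ itself. Throughout, write $e:=(I-\widehat{\mathbf{\Pi}})u\in H^2_0(\widehat{\Omega})$. Since $\widehat{\mathbf{\Pi}}$ is the $H^2$-orthogonal projection into $S^0(\widehat{\Omega})$ and the inner products $(\cdot,\cdot)_{H^2(\widehat{\Omega})}$ and $(\cdot,\cdot)_{\mathcal{B}(\widehat{\Omega})}$ coincide on $H^2_0(\widehat{\Omega})$ by~\eqref{eq:mcB:equiv}, we have the Galerkin orthogonality $(e,w)_{\mathcal{B}(\widehat{\Omega})}=0$ for all $w\in S^0(\widehat{\Omega})$, together with the stability bound $\|e\|_{\mathcal{B}(\widehat{\Omega})}=\|e\|_{H^2(\widehat{\Omega})}\le\|u\|_{H^2(\widehat{\Omega})}\le c\,|u|_{H^2(\widehat{\Omega})}$, where the last step uses~\eqref{eq:mcB:equiv} and Lemma~\ref{lemma:BdecompEquiv2D} (equivalently, a Poincar\'e-type inequality on $H^2_0(\widehat{\Omega})$).

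First I would introduce the dual function $\phi\in H^2_0(\widehat{\Omega})$ as the solution of the first biharmonic problem on $\widehat{\Omega}$ with a suitable weak right-hand side, namely
\[
	(\phi,v)_{\mathcal{B}(\widehat{\Omega})} = (\nabla e,\nabla v)_{L^2(\widehat{\Omega})}\qquad\forall\, v\in H^2_0(\widehat{\Omega}).
\]
The functional $f(v):=(\nabla e,\nabla v)_{L^2(\widehat{\Omega})}$ is bounded on $H^1_0(\widehat{\Omega})$ with $\|f\|_{H^{-1}(\widehat{\Omega})}\le\|\nabla e\|_{L^2(\widehat{\Omega})}=|e|_{H^1(\widehat{\Omega})}$, so by Assumption~\ref{ass:reg} (available on the convex parameter domain) we get $\phi\in H^3(\widehat{\Omega})\cap H^2_0(\widehat{\Omega})$ with $\|\phi\|_{H^3(\widehat{\Omega})}\le c\,|e|_{H^1(\widehat{\Omega})}$.

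Next I would take $v=e$ in the definition of $\phi$, insert $w=\widehat{\mathbf{\Pi}}\phi$ into the Galerkin orthogonality, and apply Cauchy--Schwarz to obtain
\[
	|e|_{H^1(\widehat{\Omega})}^2 = (\phi,e)_{\mathcal{B}(\widehat{\Omega})} = \big((I-\widehat{\mathbf{\Pi}})\phi,\,e\big)_{\mathcal{B}(\widehat{\Omega})}
	\le \|(I-\widehat{\mathbf{\Pi}})\phi\|_{\mathcal{B}(\widehat{\Omega})}\,\|e\|_{\mathcal{B}(\widehat{\Omega})}.
\]
Theorem~\ref{thrm:appr:d:1} applied to $\phi$, combined with~\eqref{eq:mcB:equiv} and Lemma~\ref{lemma:BdecompEquiv2D}, gives $\|(I-\widehat{\mathbf{\Pi}})\phi\|_{\mathcal{B}(\widehat{\Omega})}\le c\,h\,|\phi|_{H^3(\widehat{\Omega})}\le c\,h\,|e|_{H^1(\widehat{\Omega})}$, and the stability bound from the first paragraph gives $\|e\|_{\mathcal{B}(\widehat{\Omega})}\le c\,|u|_{H^2(\widehat{\Omega})}$. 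Dividing through by $|e|_{H^1(\widehat{\Omega})}$ (the estimate being trivial if it vanishes) yields the claim.

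The step I expect to be the main obstacle is the legitimacy of the regularity shift, exactly as in the one-dimensional prototype Theorem~\ref{thrm:appr:c:4}: one must be sure that the dual datum $v\mapsto(\nabla e,\nabla v)_{L^2}$ is controlled in $H^{-1}(\widehat{\Omega})$ by $|e|_{H^1(\widehat{\Omega})}$ (this is where the single power of $h$ ultimately comes from) and that Assumption~\ref{ass:reg} really is at our disposal on $\widehat{\Omega}$. A minor point to be handled carefully is that the left-hand side is the $H^1$-seminorm rather than the full norm; this is harmless since $e\in H^1_0(\widehat{\Omega})$, where the two are equivalent, and indeed the argument above works directly with the seminorm.
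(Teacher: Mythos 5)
Your proposal is correct and follows essentially the same route as the paper: an Aubin--Nitsche duality argument with a biharmonic dual problem whose datum $v\mapsto(\nabla e,\nabla v)_{L^2(\widehat{\Omega})}$ is bounded in $H^{-1}(\widehat{\Omega})$ by $|e|_{H^1(\widehat{\Omega})}$, the partial regularity of Assumption~\ref{ass:reg} on the convex parameter domain to get the dual solution in $H^3$, Galerkin orthogonality, and Theorem~\ref{thrm:appr:d:1} applied to the dual solution. The only (immaterial) difference is cosmetic: the paper phrases the chain as a quotient $\langle f,e\rangle/|w-\widehat{\mathbf{\Pi}}w|_{H^2}$ rather than dividing by $|e|_{H^1}$ at the end.
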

\begin{proof}
This proof is a variant of the classical Aubin Nitsche duality trick.
Let $u\in H^2_0(\widehat{\Omega})$ be arbitrary but fixed.
Define $f \in H^{-1}(\widehat{\Omega})$ by $\langle f,\cdot\rangle:=(u-\widehat{\mathbf{\Pi}} u,\cdot)_{H^1(\widehat{\Omega})}$
and define $w\in H^2_0(\Omega)$ to be such that 
\[
	(\Delta w,\Delta \widetilde{w})_{L^2(\widehat{\Omega})} = \langle f, \widetilde{w}\rangle \quad \forall\, \widetilde{w} \in H^2_0 (\widehat{\Omega}).
\]
Lax Milgram lemma yields
$|w|_{H^2(\widehat{\Omega})} = \|f\|_{H^{-2}(\widehat{\Omega})}$.
Assumption~\ref{ass:reg} (applied to the parameter domain) implies $w\in H^3(\widehat{\Omega})$
and $|w|_{H^3(\widehat{\Omega})} \le c \|f\|_{H^{-1}(\widehat{\Omega})} = c |u-\widehat{\mathbf{\Pi}} u|_{H^{1}(\widehat{\Omega})}$.
We obtain
\begin{equation*}
|u-\widehat{\mathbf{\Pi}} u|_{H^{1}(\widehat{\Omega})}   =
\frac{(u-\widehat{\mathbf{\Pi}} u,u-\widehat{\mathbf{\Pi}} u)_{H^1(\widehat{\Omega})} }{ |u-\widehat{\mathbf{\Pi}} u|_{H^{1}(\widehat{\Omega})} }	\le c \frac{(u-\widehat{\mathbf{\Pi}} u,u-\widehat{\mathbf{\Pi}} u)_{H^1(\widehat{\Omega})} }{ |w|_{H^{3}(\widehat{\Omega})} }.
\end{equation*}
Using Theorem~\ref{thrm:appr:d:1}, we further obtain
\begin{equation*}
|u-\widehat{\mathbf{\Pi}} u|_{H^{1}(\widehat{\Omega})}   \le c h \frac{(u-\widehat{\mathbf{\Pi}} u,u-\widehat{\mathbf{\Pi}} u)_{H^1(\widehat{\Omega})}  }{ |w-\widehat{\mathbf{\Pi}} w|_{H^{2}(\widehat{\Omega})} }.
\end{equation*}
The definitions of $f$ and $w$, Galerkin orthogonality, Cauchy-Schwarz inequality and the $H^2$-stability of $\widehat{\mathbf{\Pi}}$ yield
\begin{align*}
|u-\widehat{\mathbf{\Pi}} u|_{H^{1}(\widehat{\Omega})}   
		&\le c h \frac{\langle f,u-\widehat{\mathbf{\Pi}} u\rangle  }{ |w-\widehat{\mathbf{\Pi}} w|_{H^{2}(\widehat{\Omega})} }
		= c h \frac{(\Delta w,\Delta( u-\widehat{\mathbf{\Pi}} u) )_{L^2(\widehat{\Omega})}  }{ |w-\widehat{\mathbf{\Pi}} w|_{H^{2}(\widehat{\Omega})} } \\
		&\hspace{-2cm}\le c h \frac{( w, u-\widehat{\mathbf{\Pi}} u )_{H^2(\widehat{\Omega})}  }{ |w-\widehat{\mathbf{\Pi}} w|_{H^{2}(\widehat{\Omega})} }
		= c h \frac{(w-\widehat{\mathbf{\Pi}}w, u-\widehat{\mathbf{\Pi}} u )_{H^2(\widehat{\Omega})}  }{ |w-\widehat{\mathbf{\Pi}} w|_{H^{2}(\widehat{\Omega})} } \le
			ch |u-\widehat{\mathbf{\Pi}} u |_{H^{2}(\widehat{\Omega})} \\
		& \hspace{-2cm} \le	ch |u |_{H^{2}(\widehat{\Omega})},
\end{align*}
which finishes the proof.
\end{proof}

\subsection{Approximation error estimates for the physical domain}

In this subsection, we extend the robust approximation error estimates for the space $S^0(\widehat{\Omega})$
to the space $S^0(\Omega) = S(\Omega)\cap H^2_0(\Omega)$.
For this purpose,
we define $\mathbf{\Pi}=\mathbf{\Pi}_{p,h}$ to be the $H^2$-orthogonal projection from $H^2_0(\Omega) $ into $S^0(\Omega)=S^0_{p,h}(\Omega)$.
Here and in what follows, $h$ always refers to the grid size on the parameter domain. All estimates directly carry over
to the grid size $h_{\Omega}$ on the physical domain because we have $c^{-1} h\le h_{\Omega} \le c h$.

\begin{theorem}\label{thrm:appr:d:3}
     Let $p\in \mathbb{N}$ with $p\ge 3$ and $hp<1$. Then,
     \begin{equation*}
	|\left(I-\mathbf{\Pi}\right)u|_{H^2(\Omega)} \leq c\,h|u|_{H^3(\Omega)}\quad \forall\, u\in H^3(\Omega) \cap H^2_0(\Omega).
     \end{equation*}
\end{theorem}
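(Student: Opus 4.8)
The plan is to transfer the estimate of Theorem~\ref{thrm:appr:d:1} from the parameter domain to the physical domain by pulling back through the geometry transformation $\mathbf{G}$ and exploiting Assumption~\ref{ass:GeoEqui}. First I would fix $u\in H^3(\Omega)\cap H^2_0(\Omega)$ and set $\widehat{u}:=u\circ\mathbf{G}\in H^3(\widehat{\Omega})\cap H^2_0(\widehat{\Omega})$ (the boundary conditions are preserved since $\mathbf{G}$ is a bijective smooth transformation of $\widehat\Omega$ onto $\Omega$). The key observation is that because $\mathbf{G}$ is a global transformation and the spline space on $\Omega$ is defined by the pull-back principle, $\widehat{v}\mapsto \widehat{v}\circ\mathbf{G}^{-1}$ is a bijection between $S^0(\widehat\Omega)$ and $S^0(\Omega)$. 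Hence
\[
	|(I-\mathbf{\Pi})u|_{H^2(\Omega)} = \inf_{v\in S^0(\Omega)} |u-v|_{H^2(\Omega)}
	\le \underline{\alpha}^{-1}\inf_{\widehat{v}\in S^0(\widehat\Omega)} \|\widehat{u}-\widehat{v}\|_{H^2(\widehat\Omega)},
\]
where the last step uses the lower bound of Assumption~\ref{ass:GeoEqui} with $q=2$ applied to $u-v$ and the fact that $(u-v)\circ\mathbf{G} = \widehat{u}-\widehat{v}$.

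Next I would bound the infimum over $S^0(\widehat\Omega)$ by the particular choice $\widehat{v}=\widehat{\mathbf{\Pi}}\widehat{u}$, which gives $\|\widehat{u}-\widehat{\mathbf{\Pi}}\widehat{u}\|_{H^2(\widehat\Omega)}$. This full $H^2$-norm must be controlled; since $\widehat{u}-\widehat{\mathbf{\Pi}}\widehat u\in H^2_0(\widehat\Omega)$, the equivalence~\eqref{eq:mcB:equiv} together with the Poincaré-type control it encodes lets me bound $\|\cdot\|_{H^2(\widehat\Omega)}$ by a constant times $|\cdot|_{H^2(\widehat\Omega)}$ on $H^2_0(\widehat\Omega)$, so Theorem~\ref{thrm:appr:d:1} yields $\|\widehat{u}-\widehat{\mathbf{\Pi}}\widehat u\|_{H^2(\widehat\Omega)}\le c\,h\,|\widehat u|_{H^3(\widehat\Omega)}$. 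Finally the upper bound of Assumption~\ref{ass:GeoEqui} with $q=3$ gives $|\widehat u|_{H^3(\widehat\Omega)}\le \|\widehat u\|_{H^3(\widehat\Omega)}\le\overline{\alpha}\,\|u\|_{H^3(\Omega)}$, and absorbing the lower-order terms of $\|u\|_{H^3(\Omega)}$ into $|u|_{H^3(\Omega)}$ (again via the boundary conditions on $H^2_0(\Omega)$) delivers the claimed estimate with a constant $c$ depending only on $d$ and $\mathbf{G}$, consistent with Notation~\ref{not:2:1}.

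The main obstacle I anticipate is bookkeeping the passage between seminorms and full norms correctly on both domains: Assumption~\ref{ass:GeoEqui} is stated for full Sobolev norms, whereas the approximation theorems on $\widehat\Omega$ are stated for seminorms, so I need the norm equivalences on $H^2_0$ (from~\eqref{eq:mcB:equiv} and the analogous fact that $|\cdot|_{H^3}$ controls $\|\cdot\|_{H^3}$ modulo lower-order terms that are themselves controlled via Poincaré on $H^2_0$) to close the loop without losing $h$-robustness. A secondary point to state carefully is that the pull-back genuinely maps $S^0(\widehat\Omega)$ onto $S^0(\Omega)$ and preserves the homogeneous boundary conditions — this is where the ``global smooth geometry function'' hypothesis of IgA is essential — but this is routine given the definitions already in place.
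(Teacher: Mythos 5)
Your proposal is correct and follows essentially the same route as the paper: pull back through $\mathbf{G}$, use Assumption~\ref{ass:GeoEqui} on both ends, invoke Theorem~\ref{thrm:appr:d:1} on the parameter domain, and handle the seminorm/full-norm passage via Friedrichs-type inequalities on $H^2_0$, concluding with the quasi-optimality of $\mathbf{\Pi}$ applied to the candidate $[\widehat{\mathbf{\Pi}}\widehat{u}]\circ\mathbf{G}^{-1}\in S^0(\Omega)$. The bookkeeping you flag as the main obstacle is exactly what the paper's one-line chain of inequalities carries out.
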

\begin{proof}
Let $u\in H^3(\Omega) \cap H^2_0(\Omega)$ and $\widehat{u}:=u\circ \textbf{G}$. 
By combining Friedrichs' inequality, Theorem~\ref{thrm:appr:d:1} and Assumption~\ref{ass:GeoEqui}, we obtain
\begin{align*}
\underline{\alpha}\, |u-[\widehat{\mathbf{\Pi}}\widehat{u}]\circ \textbf{G}^{-1}|_{H^2(\Omega)} 
        &\leq \|(I-\widehat{\mathbf{\Pi}})\widehat{u}\|_{H^2(\widehat{\Omega})} 
         \leq c |(I-\widehat{\mathbf{\Pi}})\widehat{u}|_{H^2(\widehat{\Omega})} 
         \leq c\,h|\widehat{u}|_{H^3(\widehat{\Omega})} \\
        &\leq \overline{\alpha} \,c  \,h \|u\|_{H^3(\Omega)}
         \leq \overline{\alpha} \,c  \,h |u|_{H^3(\Omega)},
\end{align*}
where $[\widehat{\mathbf{\Pi}}\widehat{u}]\circ \textbf{G}^{-1} \in S^0( \Omega)$. As $\mathbf{\Pi}$ minimizes the $H^2$-seminorm, we obtain
$|\left(I-\mathbf{\Pi}\right)u|_{H^2(\Omega)}\leq |u-[\widehat{\mathbf{\Pi}}\widehat{u}]\circ \textbf{G}^{-1}|_{H^2(\Omega)}$. Using
	$\overline{\alpha}/\underline{\alpha}\le c$, the desired result follows.
\end{proof}
\begin{theorem}
\label{thrm:appr:d:4}
     Let $p\in \mathbb{N}$ with $p\ge 3$ and $hp<1$ and
     assume that $\Omega$ is such that Assumption~\ref{ass:reg} holds. Then, 
     \begin{equation*}
|\left(I-\mathbf{\Pi}\right)u|_{H^1(\Omega)} \leq c\,h|u|_{H^2(\Omega)}\quad \forall\, u\in H^2_0(\Omega).
     \end{equation*}
\end{theorem}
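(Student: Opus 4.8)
The plan is to carry out the Aubin--Nitsche duality argument from the proof of Theorem~\ref{thrm:appr:d:2} directly on the physical domain $\Omega$. This is the reason Assumption~\ref{ass:reg} appears as an explicit hypothesis here: one cannot simply pull $u$ back to $\widehat\Omega$, invoke Theorem~\ref{thrm:appr:d:2}, and push the result forward, since $\mathbf{\Pi}$ minimises the $H^2$-seminorm and not the $H^1$-seminorm, so the $H^1$-error of $\mathbf{\Pi}u$ is not bounded by the $H^1$-error of an arbitrary competitor in $S^0(\Omega)$.

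Concretely, I would fix $u\in H^2_0(\Omega)$, write $e:=u-\mathbf{\Pi}u$, and define $f\in H^{-1}(\Omega)$ by $\langle f,v\rangle:=(e,v)_{H^1(\Omega)}$. Let $w\in H^2_0(\Omega)$ solve $(\Delta w,\Delta\widetilde w)_{L^2(\Omega)}=\langle f,\widetilde w\rangle$ for all $\widetilde w\in H^2_0(\Omega)$; by~\eqref{eq:mcB:equiv} the bilinear form on the left is the $H^2$-inner product on $H^2_0(\Omega)$, so the Lax--Milgram lemma applies, and Assumption~\ref{ass:reg}, now used on $\Omega$, yields $w\in H^3(\Omega)$ with $|w|_{H^3(\Omega)}\le c\,\|f\|_{H^{-1}(\Omega)}=c\,|e|_{H^1(\Omega)}$. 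In particular $w\in H^3(\Omega)\cap H^2_0(\Omega)$, so Theorem~\ref{thrm:appr:d:3} applies to $w$.

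The next step is the estimate
\[
	|e|_{H^1(\Omega)}=\frac{(e,e)_{H^1(\Omega)}}{|e|_{H^1(\Omega)}}\le c\,\frac{(e,e)_{H^1(\Omega)}}{|w|_{H^3(\Omega)}}\le c\,h\,\frac{(e,e)_{H^1(\Omega)}}{|w-\mathbf{\Pi}w|_{H^2(\Omega)}},
\]
where the last step uses Theorem~\ref{thrm:appr:d:3} in the form $|w-\mathbf{\Pi}w|_{H^2(\Omega)}\le c\,h\,|w|_{H^3(\Omega)}$. By the definitions of $f$ and $w$ together with~\eqref{eq:mcB:equiv} (applied to $w,e\in H^2_0(\Omega)$),
\[
	(e,e)_{H^1(\Omega)}=\langle f,e\rangle=(\Delta w,\Delta e)_{L^2(\Omega)}=(w,e)_{H^2(\Omega)},
\]
and Galerkin orthogonality of the $H^2$-orthogonal projection $\mathbf{\Pi}$ removes the contribution of $\mathbf{\Pi}w\in S^0(\Omega)$, giving $(w,e)_{H^2(\Omega)}=(w-\mathbf{\Pi}w,e)_{H^2(\Omega)}\le|w-\mathbf{\Pi}w|_{H^2(\Omega)}\,|e|_{H^2(\Omega)}$ by Cauchy--Schwarz. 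Inserting this cancels the factor $|w-\mathbf{\Pi}w|_{H^2(\Omega)}$ and leaves $|e|_{H^1(\Omega)}\le c\,h\,|e|_{H^2(\Omega)}\le c\,h\,|u|_{H^2(\Omega)}$, the final inequality being the $H^2$-stability of $\mathbf{\Pi}$.

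The only point that really needs care, and the reason this cannot be reduced to the parameter-domain estimate, is that the dual problem must be posed and solved on $\Omega$ itself, so that its $H^3$-regularity is precisely the stated hypothesis; the transfer of the $H^3\to H^2$ approximation estimate to the physical domain has already been performed once and for all in Theorem~\ref{thrm:appr:d:3}. Apart from that, the argument is the same bookkeeping as in the proof of Theorem~\ref{thrm:appr:d:2}.
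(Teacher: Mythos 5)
Your proposal is correct and is exactly the paper's intended argument: the paper's proof of this theorem literally reads ``analogous to the proof of Theorem~\ref{thrm:appr:d:2}, using Theorem~\ref{thrm:appr:d:3} instead of Theorem~\ref{thrm:appr:d:1},'' and you have simply written out that duality argument on the physical domain, invoking Assumption~\ref{ass:reg} on $\Omega$ for the $H^3$-regularity of the dual solution. Your remark on why one cannot just push the parameter-domain estimate forward is a sensible justification of the structure, but the mechanics coincide with the paper's.
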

\begin{proof}
The proof is analogous to the proof of Theorem~\ref{thrm:appr:d:2}. In the proof, we 
use Theorem~\ref{thrm:appr:d:3} instead of Theorem~\ref{thrm:appr:d:1}.
\end{proof}
\section{Stable splitting of the spline space}
\label{sec:splitting}

In this section, we introduce an $L^2$-orthogonal
splitting of the spline space $S^0$ and show that the splitting is stable in $H^2$ analogously to~\cite{hofreither2016robust}.
To do this, we need some more approximation error estimates and inverse inequalities.

\subsection{Approximation error estimates and inverse inequalities}

First, we give an estimate for the periodic case.

\begin{theorem}
\label{thrm:approx1d}
	Let $p\in \mathbb{N}$  with $p \ge 3$ and $hp<1$. Then,
	\[
    		\| (I-T_{p,h}^{2,per}) u \|_{L^2(-1,1)} \le 2 h^2 | u |_{H^2(-1,1)} \quad \forall\, u\in H^2_{per}(-1,1).
    \]
\end{theorem}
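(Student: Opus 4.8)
The plan is to mimic the Aubin–Nitsche duality argument used in the proof of Theorem~\ref{thrm:appr:c:4}, but now in the periodic setting. Let $u\in H^2_{per}(-1,1)$ be arbitrary but fixed. First I would note that, since $T_{p,h}^{2,per}$ is the $H^{2,\circ}$-orthogonal projection, the error $(I-T_{p,h}^{2,per})u$ has mean value zero (the constant function lies in $S_{p,h}^{per}$ for $p\ge 1$, so Galerkin orthogonality with the constant, together with the definition of $(\cdot,\cdot)_{H^{2,\circ}}$, forces $\int_{-1}^1 (I-T_{p,h}^{2,per})u\,\mathrm dx=0$). This is the mean-zero condition needed to make the dual problem solvable and to identify $\|\cdot\|_{H^{2,\circ}}$-orthogonality with $\|\cdot\|_{H^2}$-orthogonality on the mean-zero subspace.

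Then I would set $g:=(I-T_{p,h}^{2,per})u$ and choose $v\in H^4_{per}(-1,1)$, with mean value zero, solving $v''''=g$ on $(-1,1)$ with periodic boundary conditions; this is solvable precisely because $\int_{-1}^1 g\,\mathrm dx=0$, and it gives $|v|_{H^4(-1,1)}=\|g\|_{L^2(-1,1)}$. Now estimate, using integration by parts (the periodicity of $v$ and of $g$ kills all boundary terms):
\begin{align*}
\|g\|_{L^2(-1,1)}
 = \frac{(g,g)_{L^2(-1,1)}}{\|g\|_{L^2(-1,1)}}
 = \frac{(g,v'''')_{L^2(-1,1)}}{|v|_{H^4(-1,1)}}
 = \frac{(g'',v'')_{L^2(-1,1)}}{|v|_{H^4(-1,1)}}
 = \frac{(g,v)_{H^{2,\circ}(-1,1)}}{|v|_{H^4(-1,1)}},
\end{align*}
where the last equality uses that $g$ has mean zero. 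By Galerkin orthogonality of $g$ against $S_{p,h}^{per}$ in the $H^{2,\circ}$ inner product, we may replace $v$ by $v-T_{p,h}^{2,per}v$ in the numerator. Applying Theorem~\ref{thrm:approx1d:prelim} (with $q=2$) to $v$ bounds $|v-T_{p,h}^{2,per}v|_{H^2(-1,1)}\le \sqrt2\,h\,|v|_{H^3(-1,1)}$, and applying it again (with $q=3$, valid since $p\ge 3$) together with the telescoping trick $v-T_{p,h}^{2,per}v=(I-T_{p,h}^{2,per})(I-T_{p,h}^{3,per})v$ as in the proof of Theorem~\ref{theorem:appNT2} gives $|v-T_{p,h}^{2,per}v|_{H^2(-1,1)}\le 2h^2|v|_{H^4(-1,1)}$. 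Hence, by Cauchy–Schwarz,
\begin{align*}
\|g\|_{L^2(-1,1)}
 \le \frac{|g|_{H^2(-1,1)}\,|v-T_{p,h}^{2,per}v|_{H^2(-1,1)}}{|v|_{H^4(-1,1)}}
 \le 2h^2\,|g|_{H^2(-1,1)}.
\end{align*}
Finally $|g|_{H^2(-1,1)}=|(I-T_{p,h}^{2,per})u|_{H^2(-1,1)}\le |u|_{H^2(-1,1)}$ by $H^2$-stability of the projection, which yields the claim.

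The main obstacle I anticipate is the bookkeeping around the modified inner product $(\cdot,\cdot)_{H^{2,\circ}}$: one must be careful that the dual solution $v$ is chosen with zero mean so that the extra term $\tfrac12\int v\int\cdot$ in $(\cdot,\cdot)_{H^{2,\circ}}$ vanishes when paired with $g$ (which also has zero mean), making the identification $(g,v)_{H^{2,\circ}}=(g'',v'')_{L^2}=(g,v)_{H^2}$ legitimate, and that periodicity of both $g$ and $v$ is genuinely used to discard the boundary terms in the two integrations by parts. Everything else is a routine transcription of the Aubin–Nitsche argument already carried out on $(0,1)$.
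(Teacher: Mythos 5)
Your proof is correct and takes essentially the same route as the paper: the paper first combines Theorem~\ref{thrm:approx1d:prelim} for $q=2$ and $q=3$ into the bound $|(I-T_{p,h}^{2,per})u|_{H^2}\le 2h^2|u|_{H^4}$ and then invokes an Aubin--Nitsche duality argument declared to be ``completely analogous to Theorem~\ref{thrm:appr:c:4}''. You carry out exactly that duality argument in the periodic setting, and your extra care with the mean-zero condition, the $H^{2,\circ}$ inner product, and the periodic boundary terms fills in precisely the details the paper leaves implicit.
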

\begin{proof}
	Theorem~\ref{thrm:approx1d:prelim} for $q=2$ and $q=3$ can be combined to
	\begin{equation}\nonumber
		|  (I - T_{p,h}^{2,per})u|_{H^{2}(-1,1)} =
		| (I-T_{p,h}^{2,per})(I-T_{p,h}^{3,per}) u |_{H^{2}(-1,1)}  \le 2 h^2 | u |_{H^4(-1,1)} 
	\end{equation}
	for all $u\in H^4_{per}(-1,1)$.
	The desired estimate is shown by an Aubin Nitsche duality trick, which is completely
	analogous to Theorem~\ref{thrm:appr:c:4}.
\end{proof}

Now, we extend the approximation error estimate to non-periodic splines. 
\begin{theorem}
\label{theorem:appDT}
Let  $p\in \mathbb{N}$ with $p \ge 3$ and $hp<1$. Then,
\begin{equation*}
    \| (I-\Pi^{D,0}) u \|_{L^2(0,1)} \le 2 h^2 | u |_{H^2(0,1)} \quad \forall u\in H^2_D(0,1).
\end{equation*}
\end{theorem}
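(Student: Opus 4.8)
The plan is to mirror the argument of Theorem~\ref{theorem:appNT2}, but starting from the $L^2$-estimate for the periodic projector (Theorem~\ref{thrm:approx1d}) instead of the seminorm estimate from Theorem~\ref{thrm:approx1d:prelim}. Let $u\in H^2_D(0,1)$ be arbitrary but fixed. As in the proof of Theorem~\ref{theorem:appNT2}, define the odd extension $w$ on $(-1,1)$ by $w(x) := \operatorname{sign}(x)\,u(|x|)$. The boundary conditions $u^{(2l)}(0)=u^{(2l)}(1)=0$ for $2l<q$ are exactly what makes $w$ inherit the periodicity conditions, so $w\in H^2_{per}(-1,1)$.

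Next I would apply Theorem~\ref{thrm:approx1d} to $w$, giving $\|(I-T_{p,h}^{2,per})w\|_{L^2(-1,1)} \le 2h^2 |w|_{H^2(-1,1)}$. Then I transfer this back to $(0,1)$ using the symmetry bookkeeping already established in the proof of Theorem~\ref{theorem:appNT2}: the approximant $w_h := T_{p,h}^{2,per} w$ is odd, hence its restriction $u_h$ to $(0,1)$ lies in $S^{D,0}(0,1)$, and the scaling relations $\|w-w_h\|_{L^2(-1,1)} = \sqrt2\,\|u-u_h\|_{L^2(0,1)}$ and $|w|_{H^2(-1,1)} = \sqrt2\,|u|_{H^2(0,1)}$ hold. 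Dividing out the common factor $\sqrt2$ yields $\|u-u_h\|_{L^2(0,1)} \le 2h^2|u|_{H^2(0,1)}$. Since $\Pi^{D,0}$ is the $H^2$-orthogonal projection, it does not directly minimize the $L^2$-error, so the final step is a short Aubin–Nitsche duality argument: for arbitrary $u\in H^2_D(0,1)$ pick $v\in H^4_D(0,1)$ with $v''''=u-\Pi^{D,0}u$ (the symmetry of the data keeps us in $H^4_D$), write $\|u-\Pi^{D,0}u\|_{L^2}^2 = (u-\Pi^{D,0}u, v'''')_{L^2} = (u-\Pi^{D,0}u, v)_{H^2}$ after integration by parts with the boundary terms vanishing, then use Galerkin orthogonality against $\Pi^{D,0}v$, Cauchy–Schwarz, the $H^2$-estimate from Theorem~\ref{theorem:appNT2}, and $H^2$-stability of $\Pi^{D,0}$ — exactly as in the proof of Theorem~\ref{thrm:appr:c:4}.

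Alternatively, and perhaps more cleanly, one can skip the explicit approximant $u_h$ and argue entirely at the level of projectors: the odd-extension trick identifies $\Pi^{D,0}u$ on $(0,1)$ with the restriction of $T_{p,h}^{2,per}w$ on $(-1,1)$ (both minimize the $H^2$-seminorm over the respective spaces, and oddness is preserved), so Theorem~\ref{thrm:approx1d} transfers verbatim up to the $\sqrt2$ factors. Either route gives the claim.

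**The main obstacle** I anticipate is purely technical: verifying that the odd extension of an $H^2_D$ function has no spurious derivative jumps at $x=0$ (so that $w\in H^2_{per}$ rather than merely $H^1$), and that the $L^2$-minimizing spline's odd part is the right object — but both of these are already handled in the proof of Theorem~\ref{theorem:appNT2}, so here they are routine. The only genuinely new ingredient is the duality step, and that is a direct copy of Theorem~\ref{thrm:appr:c:4}; the one point requiring a line of care is checking that the dual solution $v$ of $v''''=u-\Pi^{D,0}u$ can be chosen in $H^4_D(0,1)$ (which follows because $u-\Pi^{D,0}u$ is odd under the reflection, so integrating four times against the natural boundary conditions lands back in $H^4_D$).
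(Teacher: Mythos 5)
Your proposal is correct, and the ``alternative'' you sketch at the end is in fact exactly the paper's proof: the authors take the odd extension $w$, apply Theorem~\ref{thrm:approx1d}, and then show that the restriction $u_h$ of $w_h:=T_{p,h}^{2,per}w$ \emph{is} $\Pi^{D,0}u$, by checking that $u-u_h$ is $H^2$-orthogonal to every $v_h\in S^{D,0}$ (extend $v_h$ oddly to $\widetilde w_h$ and use $(w-w_h,\widetilde w_h)_{H^2(-1,1)}=2(u-u_h,v_h)_{H^2(0,1)}=0$; your phrase ``both minimize the $H^2$-seminorm and oddness is preserved'' is the informal version of this check). Your primary route is also valid but contains a redundancy worth noting: once you decide to run the Aubin--Nitsche argument of Theorem~\ref{thrm:appr:c:4} with the dual problem $v''''=u-\Pi^{D,0}u$, $v\in H^4_D$, the only ingredients you need are Theorem~\ref{theorem:appNT2}, Galerkin orthogonality and $H^2$-stability --- the approximant $u_h$ you constructed in the first half is never used, so that half can be deleted. (Conversely, the paper's identification $u_h=\Pi^{D,0}u$ makes a second duality step unnecessary, since Theorem~\ref{thrm:approx1d} was itself already proved by duality on the periodic side.) Two minor points in your duality variant: the boundary terms in the integration by parts vanish because $u-\Pi^{D,0}u$ vanishes at the endpoints and $v''(0)=v''(1)=0$ for $v\in H^4_D$, not because of any oddness of the data --- the simply supported two-point problem $v''''=e$, $v=v''=0$ at both ends is well posed for arbitrary $e\in L^2(0,1)$; and, as in the paper's own Theorem~\ref{thrm:appr:c:4}, one should read $(\cdot,\cdot)_{H^2}$ there as the second-derivative inner product for the Cauchy--Schwarz step to close with the seminorm. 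Neither issue affects correctness.
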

\begin{proof}
    Assume $u\in H^2_D(0,1)$ to be arbitrary but fixed. Define $w$ on $(-1,1)$ by
    \[
            w(x) := \mbox{sign}(x)\; u( |x| )
    \]
    and observe that $w \in H^2_{per}(-1,1)$. Using Theorem~\ref{thrm:approx1d}, we obtain
    \[
        \| (I-T_{p,h}^{2,per}) w \|_{L^2(-1,1)} \le 2 h^2 | w |_{H^2(-1,1)}.
    \]

    First, observe that $|w|_{H^2(-1,1)} = \sqrt{2} |u|_{H^2(0,1)}$.
    Define $w_h:= T_{p,h}^{2,per} w$ and $u_h$ as the restriction of $w_h$. 
    Observe that we obtain $ w_h(x)=-w_h(-x)$ using a standard symmetry argument.
    This implies $u_h\in S^{D,0}$. It follows that ${\|w-w_h\|_{L^2(-1,1)} = \sqrt{2} \|u-u_h\|_{L^2(0,1)}}$. Using this, we obtain
    \[
        \| u-u_h \|_{L^2(0,1)} \le 2 h^2 | u |_{H^2(0,1)}.
    \]

    It remains to show that $u_h$ coincides with $\Pi^{D,0} u$, i.e., that $u-u_h$ is $H^2$-orthogonal to $S^{D,0}$. By definition,
    this means that we have to show
    \begin{equation}\label{eq:proof:theorem:appDT}
        (u-u_h,v_h )_{H^2(0,1)} = 0 \quad \forall\, v_h \in S^{D,0}.
    \end{equation}
    Let  $\widetilde{w}_h \in S^{per}$ be defined as $\widetilde{w}_h := \text{sign}(x) \,v_h( |x| )$ and observe that $(w-w_h,\widetilde{w}_h)_{H^2(-1,1)}= 2(u-u_h,v_h)_{H^2(0,1)}$ since $u$, $u_h$,
    $v_h$ are restrictions of $w$, $w_h$, $\widetilde{w}_h$, respectively. Furthermore, $(w-w_h,\widetilde{w}_h)_{H^2(-1,1)} =0 $
	by construction since $w_h:= T_{p,h}^{2,per} w$. This shows~\eqref{eq:proof:theorem:appDT} and finishes the proof.
\end{proof}

Next, we need an inverse inequality. We extend the $H^1-L^2$-inverse inequality from~\cite{takacs2016approximation} 
to the pair $H^2-L^2$ and the space $S^{D,0}$

\begin{theorem}
\label{theo:BiInv}
For all grid sizes $h$ and each $p\in \mathbb{N}$,
\begin{equation}
\label{eq:inEqBi1}
\vert u_h\vert_{H^{2}\left(0,1\right)} \leq 12h^{-2}\left\Vert u_h\right\Vert_{L^2\left(0,1\right)} \qquad \forall\,u_h\in S^{D,0}.
\end{equation}
\end{theorem}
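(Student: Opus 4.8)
The plan is to prove the $H^2$–$L^2$ inverse inequality on $S^{D,0}$ by reducing to the periodic case via the same odd reflection used in Theorems~\ref{theorem:appNT2} and~\ref{theorem:appDT}, and then proving the periodic inverse inequality directly. First I would take $u_h\in S^{D,0}$ arbitrary and define $w_h$ on $(-1,1)$ by $w_h(x):=\mbox{sign}(x)\,u_h(|x|)$. Because $u_h^{(2l)}(0)=u_h^{(2l)}(1)=0$ for all $2l<p$, the odd extension $w_h$ is a spline of degree $p$ with the full periodicity (the even-derivative conditions at $0$ are exactly what is needed for the odd reflection to remain $C^{p-1}$ across $0$, and the conditions at $1$ together with periodicity give the matching at $\pm1$), i.e.\ $w_h\in S^{per}_{p,h}(-1,1)$. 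As in the earlier proofs, $\|w_h\|_{L^2(-1,1)}^2 = 2\|u_h\|_{L^2(0,1)}^2$ and $|w_h|_{H^2(-1,1)}^2 = 2|u_h|_{H^2(0,1)}^2$, so it suffices to show $|w_h|_{H^2(-1,1)} \le 12 h^{-2}\|w_h\|_{L^2(-1,1)}$ for all $w_h\in S^{per}_{p,h}(-1,1)$.

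For the periodic inverse inequality I would invoke (or re-derive, following \cite{takacs2016approximation}) the $H^1$–$L^2$ inverse inequality on periodic splines, $|v_h|_{H^1(-1,1)} \le c\,h^{-1}\|v_h\|_{L^2(-1,1)}$, and apply it twice. The subtle point is that applying it twice requires $w_h' \in S^{per}_{p-1,h}(-1,1)$, which holds since differentiation maps $S^{per}_{p,h}$ into $S^{per}_{p-1,h}$; then $|w_h|_{H^2} = |w_h'|_{H^1} \le c h^{-1}\|w_h'\|_{L^2} = c h^{-1}|w_h|_{H^1} \le c h^{-2}\|w_h\|_{L^2}$. Chasing the explicit constants from \cite{takacs2016approximation} through this two-step argument (and through the factor-$\sqrt2$ reflection identities, which cancel) should yield the stated constant $12$; alternatively, if a clean constant is awkward one can instead prove the periodic bound by a direct computation in the Fourier/B-spline basis, but the two-application route is cleaner and matches the paper's style.

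The main obstacle is the bookkeeping of constants: the paper states the sharp-looking constant $12$, and getting exactly that requires care about which univariate inverse-inequality constant is imported from \cite{takacs2016approximation}, how the $H^1$ seminorm of $w_h'$ relates to the $H^2$ seminorm of $w_h$ (these are literally equal, so no loss there), and the fact that the $\sqrt2$ factors from the odd reflection appear on both sides and cancel exactly. A secondary point worth a sentence is verifying that the odd reflection of a function in $S^{D,0}$ genuinely lands in the periodic spline space for \emph{every} $p$ (the hypothesis here is just $p\in\mathbb N$, with no $p\ge3$ restriction, unlike the approximation theorems) — this is fine because the constraints defining $S^{D,0}$ are precisely tuned to the degree $p$, but it should be checked that no spurious smoothness is lost at the reflection point when $p$ is small.
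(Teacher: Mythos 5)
Your proposal matches the paper's proof essentially verbatim: odd reflection of $u_h$ to a periodic spline $w_h$ on $(-1,1)$, two applications of the periodic $H^1$--$L^2$ inverse inequality from \cite{takacs2016approximation} (once to $w_h$ and once to $w_h'$), whose constant $2\sqrt{3}$ squares to the stated $12$, with the $\sqrt{2}$ reflection factors cancelling. Your extra care in checking that the odd reflection of an element of $S^{D,0}$ lies in $S^{per}_{p,h}(-1,1)$ for every $p$ is a point the paper glosses over, but the argument is the same.
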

\begin{proof}
We extend $u_h$ to $(-1,1)$ by defining $w_h(x) = \mbox{sign}(x)\; u_h(\vert x\vert)$. Observe that $w_h\in H^{2,per}(-1,1)$. 
Analogously to the proof of \cite[Theorem~6.1]{takacs2016approximation}, we obtain
\[
\vert w_h' \vert_{H^1(-1,1)} \leq 2\sqrt{3} h^{-1} \left\Vert w_h'\right\Vert_{L^2(-1,1)}
\quad\mbox{and}\quad
\vert w_h \vert_{H^1(-1,1)} \leq 2\sqrt{3} h^{-1} \left\Vert w_h\right\Vert_{L^2(-1,1)}.
\]
The combination of these two results yields
\[
\vert w_h \vert_{H^2(-1,1)} \leq 12h^{-2}\left\Vert w_h\right\Vert_{L^2(-1,1)}.
\]
As $\vert w_h \vert_{H^2(-1,1)} = \sqrt{2} \vert u_h \vert_{H^2(0,1)}$ and
$\left\Vert w_h\right\Vert_{L^2(-1,1)}=\sqrt{2} \left\Vert u_h\right\Vert_{L^2(0,1)}$, the desired result immediately follows.
\end{proof}

\subsection{Stable splitting in the univariate case}

In the previous section, we have introduced the projectors $\Pi^{D,0}: H^{2}_{D}\rightarrow S^{D,0}$. 
Now, we introduce the $L^{2}$-orthogonal projectors
\begin{align*}
Q^{D,0}: S \rightarrow S^{D,0} \quad \mbox{and}\quad
Q^{D,1} := I - Q^{D,0},
\end{align*}
which split $S$ into the direct sum
\begin{align*}
S &= S^{D,0} \oplus S^{D,1} \quad \longleftrightarrow \quad u = Q^{D,0} u + Q^{D,1}u,
\end{align*}
where $S^{D,1}$ is the $L^{2}$-orthogonal complement of $S^{D,0}$ in $S$.
Because the splitting is $L^2$-orthogonal, we obtain
\begin{align}\label{eq:orth:Q}
\left\Vert u\right\Vert^2_{L^2\left(0,1\right)} =\left\Vert Q^{D,0} u\right\Vert^2_{L^2\left(0,1\right)} + \left\Vert Q^{D,1}u\right\Vert^2_{L^2\left(0,1\right)}
\quad \forall\, u\in S.
\end{align}

We show that the splitting is stable in the $H^2$-norm.
\begin{theorem}
\label{theo:split1DH2Stable}
Let  $p\in \mathbb{N}$ with $p\ge 3$ and $hp<1$. Then,
\begin{align*}
c^{-1}\vert u\vert^2_{H^{2}\left(0,1\right)} \leq \vert Q^{D,0} u\vert^2_{H^{2}\left(0,1\right)}  + \vert Q^{D,1}u\vert^2_{H^{2}\left(0,1\right)}  \leq c \vert u\vert^2_{H^{2}\left(0,1\right)}
\quad \forall\, u\in S.
\end{align*}
\end{theorem}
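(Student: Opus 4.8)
The plan is to establish the two-sided estimate separately. The upper bound (stability of the splitting in $H^2$) is the substantive part; the lower bound will follow from it by a short duality/telescoping argument. For the upper bound, write $u = Q^{D,0}u + Q^{D,1}u$ and observe that $Q^{D,1}u = (I-Q^{D,0})u = u - Q^{D,0}u$, so it suffices to bound $|Q^{D,0}u|_{H^2}$ by $c|u|_{H^2}$, since then $|Q^{D,1}u|_{H^2} \le |u|_{H^2} + |Q^{D,0}u|_{H^2} \le c|u|_{H^2}$ by the triangle inequality. To bound $|Q^{D,0}u|_{H^2}$, I would insert the $H^2$-orthogonal projector $\Pi^{D,0}$ as an intermediary: since $\Pi^{D,0}u \in S^{D,0}$ and $Q^{D,0}$ is the $L^2$-projector onto $S^{D,0}$, we have $Q^{D,0}u - \Pi^{D,0}u = Q^{D,0}(u - \Pi^{D,0}u) \in S^{D,0}$. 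Applying the inverse inequality (Theorem~\ref{theo:BiInv}) to this spline and then $L^2$-stability of $Q^{D,0}$,
\[
|Q^{D,0}u - \Pi^{D,0}u|_{H^2(0,1)} \le 12 h^{-2} \|Q^{D,0}(u-\Pi^{D,0}u)\|_{L^2(0,1)} \le 12 h^{-2} \|u-\Pi^{D,0}u\|_{L^2(0,1)}.
\]

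Now the right-hand side must be controlled by $c|u|_{H^2}$, and this is where the approximation estimates enter. The clean case is $u \in H^2_D(0,1)$: Theorem~\ref{theorem:appDT} gives $\|(I-\Pi^{D,0})u\|_{L^2} \le 2h^2|u|_{H^2}$ directly, so the displayed quantity is $\le 24|u|_{H^2}$, and combined with $H^2$-stability of $\Pi^{D,0}$ we get $|Q^{D,0}u|_{H^2} \le c|u|_{H^2}$. The obstacle is that the theorem is stated for all $u \in S$, not just $u \in S \cap H^2_D$, so a general spline $u$ need not satisfy the homogeneous even-derivative boundary conditions and $\Pi^{D,0}u$ is not even defined on it. The fix is to split off the boundary behaviour: decompose $u = u_0 + u_b$ where $u_b \in S$ is a fixed low-degree-of-freedom spline interpolating the relevant even-derivative boundary data of $u$ (matching $u(0), u(1)$ and, if $p$ is large enough that the condition $u''(0)=u''(1)=0$ is part of the definition of $S^{D,0}$, also $u''(0), u''(1)$), chosen with support near the boundary so that by a local inverse/scaling estimate $\|u_b\|_{L^2} \le c h^{1/2}(|u(0)|+|u(1)|) + \dots \le c h^{5/2}|u|_{H^2}$ after a trace estimate, or more simply so that $|u_b|_{H^2} \le c|u|_{H^2}$ and $u_0 := u - u_b \in S \cap H^2_D$. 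Then $Q^{D,0}u_b$ is harmless because $|Q^{D,0}u_b|_{H^2} \le 12h^{-2}\|u_b\|_{L^2} + |\Pi^{D,0}(\cdot)|$ — actually the cleanest route is $|Q^{D,0}u_b - 0|$ is not available; instead bound $\|Q^{D,0}u_b\|_{L^2}\le\|u_b\|_{L^2}$ and use the inverse inequality on the spline $Q^{D,0}u_b \in S^{D,0}$ to get $|Q^{D,0}u_b|_{H^2}\le 12h^{-2}\|u_b\|_{L^2}\le c|u|_{H^2}$ provided $\|u_b\|_{L^2}\le ch^2|u|_{H^2}$, which holds since $u_b$ captures only boundary data that vanishes to the appropriate order for $H^2$-functions up to $O(h^2)$ corrections. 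For $u_0$, apply the $H^2_D$ argument above.

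Having the upper bound, the lower bound $c^{-1}|u|^2_{H^2} \le |Q^{D,0}u|^2_{H^2} + |Q^{D,1}u|^2_{H^2}$ follows by a standard argument: since $u = Q^{D,0}u + Q^{D,1}u$, the triangle inequality immediately gives $|u|^2_{H^2} \le 2(|Q^{D,0}u|^2_{H^2} + |Q^{D,1}u|^2_{H^2})$, so one may take $c^{-1} = 1/2$ with no further work. (If a sharper constant were wanted one would exploit near-$L^2$-orthogonality of the two components together with the approximation estimates, but for the stated spectral-equivalence form the trivial bound suffices.) I expect the main obstacle to be the boundary-correction bookkeeping in the upper bound — precisely, isolating the non-$H^2_D$-conforming part $u_b$ of a general spline $u \in S$ with an $L^2$-bound of order $h^2|u|_{H^2}$ so that the inverse inequality can absorb it — together with keeping careful track of exactly which even-derivative conditions appear in the definition of $S^{D,0}$ as a function of whether $2 < p$, i.e. $p \ge 3$, which is exactly the hypothesis assumed.
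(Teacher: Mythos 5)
Your core argument is exactly the paper's proof. The paper bounds $\vert Q^{D,0}u\vert_{H^2}$ by inserting $\Pi^{D,0}u$ via the triangle inequality, applies the inverse inequality of Theorem~\ref{theo:BiInv} to the spline $(\Pi^{D,0}-Q^{D,0})u\in S^{D,0}$, and controls $\Vert(\Pi^{D,0}-Q^{D,0})u\Vert_{L^2}$ by Theorem~\ref{theorem:appDT} together with the $L^2$-minimizing property of $Q^{D,0}$; your version with $Q^{D,0}u-\Pi^{D,0}u=Q^{D,0}(u-\Pi^{D,0}u)$ and $L^2$-stability is the same estimate written slightly more compactly. The lower bound with $c^{-1}=1/2$ and the reduction of $\vert Q^{D,1}u\vert_{H^2}$ to $\vert Q^{D,0}u\vert_{H^2}$ by the triangle inequality also coincide with the paper.

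The place where you diverge is the boundary-correction machinery, and that part does not work as written. You are right to notice that $\Pi^{D,0}$ and Theorem~\ref{theorem:appDT} are only available for $u\in H^2_D(0,1)$, while the theorem is stated for all $u\in S$ --- the paper simply applies $\Pi^{D,0}$ to $u\in S$ without comment. But your proposed fix hinges on $\Vert u_b\Vert_{L^2}\le c\,h^2\vert u\vert_{H^2}$, and no such bound can hold: the boundary data of a spline is not controlled by its $H^2$-\emph{seminorm}. Take $u\equiv 1\in S$; then $\vert u\vert_{H^2}=0$ but any $u_b$ matching $u(0)=u(1)=1$ has $\Vert u_b\Vert_{L^2}>0$. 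In fact this same example shows the two-sided estimate itself fails on all of $S$: $Q^{D,0}1$ is a nonzero element of $S^{D,0}$ (constants are not $L^2$-orthogonal to $S^{D,0}$), hence not affine, so $\vert Q^{D,0}1\vert_{H^2}>0=\vert 1\vert_{H^2}$. The correct resolution is not to correct the boundary data but to restrict the statement to $u\in S\cap H^2_D(0,1)$ (equivalently, as needed for the smoother analysis, to the spaces $V_{h,\alpha}=S^{D,\alpha}\cap S^0$), where $\Pi^{D,0}u$ is defined and Theorem~\ref{theorem:appDT} applies directly; with that restriction your first paragraph already constitutes a complete proof and no splitting $u=u_0+u_b$ is needed.
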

\begin{proof}
The proof is analogous to \cite[Theorem 4]{hofreither2016robust}. The left inequality follows from Cauchy-Schwarz inequality with $c=2$. For the right inequality, we have
\begin{align*}
\vert Q^{D,0} u\vert_{H^{2}\left(0,1\right)} 
& \leq \vert \Pi^{D,0} u \vert_{H^{2}\left(0,1\right)}+ \vert (\Pi^{D,0}  -Q^{D,0} ) u\vert_{H^{2}\left(0,1\right)} \\
& \leq \vert u\vert_{H^{2}\left(0,1\right)} + c h^{-2}\left\Vert (\Pi^{D,0}-Q^{D,0})u\right\Vert_{L^2\left(0,1\right)},
\end{align*}
using the triangle inequality and the inverse inequality Theorem \ref{theo:BiInv}. Using the triangle inequality and the approximation error estimate Theorem \ref{theorem:appDT}, we get
\begin{align*}
&\vert Q^{D,0} u\vert_{H^{2}\left(0,1\right)} \\
&\quad \leq \vert u\vert_{H^{2}\left(0,1\right)} + c h^{-2}\left(\left\Vert (I-\Pi^{D,0})u\right\Vert_{L^2\left(0,1\right)}+\left\Vert (I-Q^{D,0})u\right\Vert_{L^2\left(0,1\right)}\right)\\
&\quad \leq c\vert u\vert_{H^{2}\left(0,1\right)}.
\end{align*}
Using the inequality above together with 
\begin{align*}
\vert Q^{D,0} u\vert_{H^{2}\left(0,1\right)}^2+ \vert Q^{D,1} u\vert_{H^{2}\left(0,1\right)} ^2 \leq 2 \vert u\vert_{H^{2}\left(0,1\right)} ^2+3 \vert Q^{D,0} u\vert_{H^{2}\left(0,1\right)}^2, 
\end{align*}
completes the proof.
\end{proof}

\subsection{Stable splitting in the multivariate case}
The generalization to two and more dimensions is straight forward. Let $\widehat\Omega = (0,1)^d$ and let $\alpha \in \lbrace 0,1\rbrace^d$ be a multiindices. The space $S(\widehat{\Omega})$ is split into the direct sum of $2^d$ subspaces
\begin{equation*}
S(\widehat{\Omega}) = \bigoplus_{\alpha\in\{0,1\}^d} S^{D,\alpha} (\widehat{\Omega}) \quad \text{where} \quad S^{D,\alpha} (\widehat{\Omega}) = S^{D,\alpha_1}\otimes \ldots \otimes  S^{D,\alpha_d}.
\end{equation*}
The $L^2(\widehat\Omega)$-orthogonal projectors are given by 
\begin{equation*} 
\mathbf{Q}^{D,\alpha} := Q^{D,\alpha_1}\otimes\ldots \otimes Q^{D,\alpha_d} : S(\widehat{\Omega}) \rightarrow S^{D,\alpha}(\widehat{\Omega}).
\end{equation*} 
As in the univariate case, the splitting is stable.
\begin{theorem}
\label{theo:stableH1aD}
Let  $p\in \mathbb{N}$ with $p\ge 3$ and $hp<1$. Then,
\begin{align}
\|u\|_{L^2(\widehat\Omega)}^2 &= \sum_{\alpha\in\{0,1\}^d} \|\mathbf{Q}^{D,\alpha}u\|_{L^2(\widehat\Omega)}^2 \quad \forall\, u\in S(\widehat{\Omega}), \label{eq:dd:splitting:equation} \\
c^{-1}\vert u\vert^2_{\bar{\mathcal{B}}(\widehat{\Omega})} &\leq \sum_{\alpha\in\{0,1\}^d}\vert \mathbf{Q}^{D,\alpha} u\vert^2_{\bar{\mathcal{B}}(\widehat{\Omega})}    \leq c \vert u\vert^2_{{\bar{\mathcal{B}}(\widehat{\Omega})}}\quad \forall\, u\in S(\widehat{\Omega}) \label{eq:dd:splitting:inequality}.
\end{align}
\end{theorem}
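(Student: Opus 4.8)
The equality \eqref{eq:dd:splitting:equation} is the multivariate analogue of \eqref{eq:orth:Q}. Since each $Q^{D,\alpha_k}$ is an $L^2$-orthogonal projector on the univariate factor and the tensor product of $L^2$-orthogonal projectors is $L^2$-orthogonal, the family $\{\mathbf{Q}^{D,\alpha}\}_{\alpha\in\{0,1\}^d}$ consists of mutually $L^2(\widehat\Omega)$-orthogonal projectors that sum to the identity on $S(\widehat\Omega)$; applying \eqref{eq:orth:Q} in each coordinate direction and using Fubini gives \eqref{eq:dd:splitting:equation} immediately. So the work is entirely in the seminorm inequality \eqref{eq:dd:splitting:inequality}.

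For \eqref{eq:dd:splitting:inequality}, the plan is to reduce to the univariate result Theorem~\ref{theo:split1DH2Stable} one coordinate at a time. Recall $\vert u\vert_{\bar{\mathcal B}(\widehat\Omega)}^2 = \sum_{k=1}^d \|\partial_{x_kx_k}u\|_{L^2(\widehat\Omega)}^2$, so it suffices to bound each term $\|\partial_{x_kx_k}\mathbf{Q}^{D,\alpha}u\|_{L^2(\widehat\Omega)}^2$. Fix $k$ and write $\mathbf{Q}^{D,\alpha} = Q^{D,\alpha_k}_{(k)} \circ R_\alpha$, where $Q^{D,\alpha_k}_{(k)}$ acts in the $k$-th variable and $R_\alpha$ is the tensor product of the remaining $Q^{D,\alpha_j}_{(j)}$, $j\ne k$. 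Since $\partial_{x_kx_k}$ commutes with the projectors in the other variables, and those other projectors are $L^2$-bounded (being $L^2$-orthogonal projectors, they have norm $\le 1$), we get
\[
	\sum_{\alpha_k\in\{0,1\}} \|\partial_{x_kx_k}\mathbf{Q}^{D,\alpha}u\|_{L^2(\widehat\Omega)}^2
		\le c \sum_{\alpha_k\in\{0,1\}} \|\partial_{x_kx_k} Q^{D,\alpha_k}_{(k)}(R_\alpha u)\|_{L^2(\widehat\Omega)}^2
		\le c \|\partial_{x_kx_k}(R_\alpha u)\|_{L^2(\widehat\Omega)}^2,
\]
where the last step applies the univariate upper bound of Theorem~\ref{theo:split1DH2Stable} slicewise in $x_k$ and integrates over the remaining variables. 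Iterating this over the $d-1$ remaining factors appearing in $R_\alpha$ (peeling off one $Q^{D,\alpha_j}_{(j)}$ at a time, using $L^2$-boundedness and the univariate stability estimate in direction $j$) yields $\|\partial_{x_kx_k}(R_\alpha u)\|_{L^2(\widehat\Omega)}^2 \le c \|\partial_{x_kx_k} u\|_{L^2(\widehat\Omega)}^2$. Summing over $\alpha$ and over $k$ gives the upper bound in \eqref{eq:dd:splitting:inequality}.

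For the lower bound, I would use the same Cauchy–Schwarz argument as in the univariate case: since $u = \sum_\alpha \mathbf{Q}^{D,\alpha}u$, the triangle inequality and Cauchy–Schwarz in $\mathbb{R}^{2^d}$ give $\|\partial_{x_kx_k}u\|_{L^2(\widehat\Omega)}^2 \le 2^d \sum_\alpha \|\partial_{x_kx_k}\mathbf{Q}^{D,\alpha}u\|_{L^2(\widehat\Omega)}^2$ for each $k$; summing over $k$ gives $\vert u\vert_{\bar{\mathcal B}(\widehat\Omega)}^2 \le 2^d \sum_\alpha \vert \mathbf{Q}^{D,\alpha}u\vert_{\bar{\mathcal B}(\widehat\Omega)}^2$, which is the left inequality with $c = 2^d$.

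**Main obstacle.** The only genuinely delicate point is making the slicewise application of Theorem~\ref{theo:split1DH2Stable} rigorous: one must check that for fixed values of the other coordinates the relevant univariate slice lies in $S(0,1)$ (true, since $S^{D,\alpha}(\widehat\Omega)$ is a tensor product and $R_\alpha u$ is still a spline in each remaining variable), that $\partial_{x_kx_k}$ genuinely commutes with $Q^{D,\alpha_j}_{(j)}$ for $j\ne k$ (it does, as these act on different variables and are defined via $L^2$ inner products that are unaffected by $x_k$-differentiation — this is the tensor-product analogue of Lemma~\ref{lem:commute}), and that the constant accumulated over the $d$ peeling steps stays bounded (it is $c^d$ with $c$ the univariate constant, hence a constant in the sense of Notation~\ref{not:2:1}). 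Since $d$ is fixed, none of this is an essential difficulty; the argument is, as the authors note, a straightforward generalization of the one-dimensional case.
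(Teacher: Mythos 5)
Your proposal is correct and follows essentially the same route as the paper: the equality and the lower bound are obtained exactly as you describe, and the upper bound in \eqref{eq:dd:splitting:inequality} is proved in the paper by the same factor-by-factor reduction, applying the $L^2$-orthogonality \eqref{eq:orth:Q} to the projectors $Q^{D,\alpha_l}$ with $l\neq k$ and Theorem~\ref{theo:split1DH2Stable} to the factor $Q^{D,\alpha_k}$. The only cosmetic difference is that for the factors $j\neq k$ you invoke both $L^2$-boundedness and the univariate stability estimate, whereas only the former is needed there.
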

\begin{proof}
The equation~\eqref{eq:dd:splitting:equation} follows immediately from the equality in the one dimensional case. The left inequality
in~\eqref{eq:dd:splitting:inequality} follows immediately from the Cauchy-Schwarz inequality. 

It remains to show the right inequality in~\eqref{eq:dd:splitting:inequality}.
Let $\alpha$ and $u$ be arbitrary but fixed. We have
\[
	\vert \mathbf{Q}^{D,\alpha} u\vert^2_{{\bar{\mathcal{B}}(\widehat{\Omega})}}   = 
	 \sum_{k=1}^d \| \partial_{x_k}^2 \mathbf{Q}^{D,\alpha} u\|^2_{L^2(\widehat\Omega)} =
	 \sum_{k=1}^d \| \partial_{x_k}^2 Q^{D,\alpha_1} \otimes \cdots \otimes Q^{D,\alpha_d} u\|^2_{L^2(\widehat\Omega)} .
\]
We obtain
\[
	\| \partial_{x_k}^2 Q^{D,\alpha_1} \otimes \cdots \otimes Q^{D,\alpha_d} u\|^2_{L^2(\widehat\Omega)}
	\le c
	\| \partial_{x_k}^2 u\|^2_{L^2(\widehat\Omega)}
\]
by applying~\eqref{eq:orth:Q} for all $Q^{D,\alpha_l}$ with $l\not=k$ and by applying Theorem~\ref{theo:split1DH2Stable}
for $Q^{D,\alpha_k}$. Combining these two inequalities yields
\[
	\vert \mathbf{Q}^{D,\alpha} u\vert^2_{{\bar{\mathcal{B}}(\widehat{\Omega})}}  \le c \vert u\vert^2_{{\bar{\mathcal{B}}(\widehat{\Omega})}}.
\]
Summing over all multi-indices $\alpha$ yields the desired estimate.
\end{proof}

\section{Constructing a robust multigrid method}
\label{sec:MGconstruction}

In this section, we develop a robust multigrid method for solving the linear system~\eqref{eq:lin:sys}. We assume that
we have constructed a hierarchy of grids by uniform refinement. We obtain $V_H\subset V_h$ for two consecutive grids with grid sizes
$h$ and $H:=2h$. For these spaces, we define $P_h:V_{H}\rightarrow V_h$ to be the canonical embedding. We denote the
its matrix representation with the same symbol, the restriction is realized as its transpose~$P_h'$.

For a given initial iterate $u_h^{(k)}$, we obtain the next iterate $u_h^{(k+1)}$ by applying the following steps.
First, we perform $\nu\in\mathbb{N}$ \emph{smoothing steps}, given by 
\begin{equation*}
u^{\left(k, i\right)}_h:= u^{\left(k, i-1\right)}_h +\tau L_h^{-1} \left(f_h-\mathcal{B}_hu^{\left(k, i-1\right)}_h\right), \quad \text{for }i=1,\ldots, \nu,
\end{equation*}
where $u^{\left(k, 0\right)}_h:=u^{(k)}_h$, $L_h$ represents the chosen smoother and $\tau$ is an appropriately chosen damping parameter.
The choice of $L_h$ and $\tau$ is discussed below. Second, we perform a \emph{coarse-grid correction step}, which is for the two-grid method given by
\begin{equation*}
u^{(k+1)}_h:= u^{\left(k, \nu\right)}_h + P_h\mathcal{B}_H^{-1}P_h' \left(f_h-\mathcal{B}_hu^{\left(k, \nu\right)}_h\right).
\end{equation*}

Given a sequence of spaces, we replace the application of $\mathcal{B}_{H}^{-1}$ by one or two steps of the method
on the next coarser level. This results in the V-cycle or W-cycle multigrid method, respectively. The application of $\mathcal{B}_{H}^{-1}$ is realized by means of a direct solver only on the coarsest grid level.  

In the sequel, we discuss two possibilities for the smoother, the Gauss Seidel smoother and a subspace corrected mass smoother.
While only the latter is robust in the spline degree, the Gauss Seidel smoother is superior for small spline degrees and for cases
where a non-trivial geometry transformation is involved. First, we introduce the framework for the convergence analysis and give common results
for both smoothers.

We show the convergence of the multigrid method based on the splitting of the analysis into approximation property and
smoothing property (cf.~\cite{hackbusch2013multi}). As we do not assume full $H^4$-regularity, we choose to show
convergence in the norm $\|\cdot\|_{\mathcal B_h+h^{-2}\mathcal K_h}$, where $\mathcal K_h$ is the matrix 
obtained by discretizing $(\cdot,\cdot)_{H^1(\Omega)}$. The approximation property~\eqref{eq:app} and the
smoothing property~\eqref{eq:smp} read as follows:
\begin{align}
        \| (\mathcal B_h+h^{-2}\mathcal K_h)^{1/2} (I-P_h\mathcal{B}_H^{-1}P_h' \mathcal B_h) \mathcal B_h^{-1} (\mathcal B_h+h^{-2}\mathcal K_h)^{1/2} \| & \le C_A,  \label{eq:app} \\
        \| (\mathcal B_h+h^{-2}\mathcal K_h)^{-1/2} \mathcal B_h (I-\tau L_h^{-1} \mathcal B_h)^\nu (\mathcal B_h+h^{-2}\mathcal K_h)^{-1/2} \| & \le \nu^{-1/2} C_S . \label{eq:smp}
\end{align}
The combination of these two properties yields
\[
        q:=\|(I-P_h\mathcal{B}_H^{-1}P_h' \mathcal B_h) (I-\tau L_h^{-1} \mathcal B_h)^\nu \|_{\mathcal B_h+h^{-2} \mathcal K_h}  \le  \frac{ C_A C_S }{\sqrt{\nu}},
\]
i.e., the two-grid method convergences if sufficiently many smoothing steps are applied. The convergence of the W-cycle multigrid method
follows under weak assumptions (cf.~\cite{hackbusch2013multi}).

The approximation property follows from the approximation error estimates we have shown in Section~\ref{sec:approx:inverse}.
\begin{theorem}\label{thrm:app}
	Let $p\in\mathbb N$ with $p\ge 3$ and $Hp<1$. Then,
    the approximation property~\eqref{eq:app} is satisfied with a constant $C_A$ being independent of $h$ and $p$ (cf. Notation~\ref{not:2:1}).
\end{theorem}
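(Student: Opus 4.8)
The plan is to reduce the matrix statement~\eqref{eq:app} to a statement about the $H^2$-orthogonal projection $\mathbf{\Pi}$ on the physical domain, for which the approximation error estimates of Section~\ref{sec:approx:inverse} apply. First I would pass from matrices to operators: the norm in~\eqref{eq:app} equals the operator norm, on the space $V_h$ equipped with the inner product $\mathcal B_h + h^{-2}\mathcal K_h$ (which corresponds to $(\cdot,\cdot)_{H^2(\Omega)} + h^{-2}(\cdot,\cdot)_{H^1(\Omega)}$ on $S^0(\Omega)$, using~\eqref{eq:mcB:equiv}), of the operator $\mathcal B_h^{-1/2}(I - P_h\mathcal B_H^{-1}P_h'\mathcal B_h)\mathcal B_h$-type expression. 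The key algebraic observation is that $P_h\mathcal B_H^{-1}P_h'\mathcal B_h$ is precisely the $\mathcal B_h$-orthogonal (i.e.\ $H^2(\Omega)$-orthogonal, on $S^0(\Omega)$) projection onto the coarse space $V_H = S^0_{p,H}(\Omega)$; call it $\Pi_H$. So $I - P_h\mathcal B_H^{-1}P_h'\mathcal B_h = I - \Pi_H$ restricted to $V_h$, and since $\Pi_H$ is a $\mathcal B_h$-orthogonal projection the expression inside the norm in~\eqref{eq:app} is self-adjoint with respect to $\mathcal B_h^{-1}$; a short computation shows $\|(\mathcal B_h + h^{-2}\mathcal K_h)^{1/2}(I-\Pi_H)\mathcal B_h^{-1}(\mathcal B_h+h^{-2}\mathcal K_h)^{1/2}\| = \sup_{v_h \in V_h} \frac{\|(I-\Pi_H) v_h\|_{\mathcal B_h + h^{-2}\mathcal K_h}^2}{\|v_h\|_{\mathcal B_h}^2}$, or more precisely that it suffices to bound, for every $v_h\in V_h$,
\[
	|(I-\Pi_H)v_h|_{H^2(\Omega)}^2 + h^{-2}|(I-\Pi_H)v_h|_{H^1(\Omega)}^2 \le C_A\, |v_h|_{H^2(\Omega)}^2 .
\]

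Next I would drop the discreteness of $v_h$: since $V_h \subset H^2_0(\Omega)$ and $\Pi_H$ is the $H^2$-orthogonal projection onto $V_H\subset H^2_0(\Omega)$, it is enough to prove the displayed inequality for every $v \in H^2_0(\Omega)$, with $\Pi_H = \mathbf{\Pi}_{p,H}$ as defined just before Theorem~\ref{thrm:appr:d:3}. Now the two terms are handled by the two approximation results already in hand. For the $H^2$-term I would use $H^2$-stability of $\mathbf{\Pi}$ directly, $|(I-\mathbf{\Pi})v|_{H^2(\Omega)} \le |v|_{H^2(\Omega)}$, which gives the first summand with constant $1$. For the $h^{-2}$-weighted $H^1$-term I would invoke Theorem~\ref{thrm:appr:d:4} (valid since $\Omega$ satisfies Assumption~\ref{ass:reg} and $Hp<1$), which yields $|(I-\mathbf{\Pi})v|_{H^1(\Omega)} \le c\,H|v|_{H^2(\Omega)}$; since $H = 2h$, we get $h^{-2}|(I-\mathbf{\Pi})v|_{H^1(\Omega)}^2 \le 4c^2 |v|_{H^2(\Omega)}^2$. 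Adding the two bounds gives $C_A = 1 + 4c^2$, which is independent of $h$ and $p$ by Notation~\ref{not:2:1}.

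The one point that needs a little care — and which I expect to be the main obstacle to write cleanly rather than conceptually hard — is the first reduction step: identifying $P_h\mathcal B_H^{-1}P_h'\mathcal B_h$ with the $H^2$-orthogonal coarse-grid projection and then manipulating the operator norm. One has to be careful that $(\mathcal B_h + h^{-2}\mathcal K_h)^{1/2}$ appears on both sides and that $\mathcal B_h$ (not $\mathcal B_h + h^{-2}\mathcal K_h$) sits in the middle; the trick is to substitute $w_h := (\mathcal B_h+h^{-2}\mathcal K_h)^{1/2} v_h$, use that $(I-\Pi_H)$ is $\mathcal B_h$-self-adjoint and idempotent, so that the quadratic form becomes $\|(\mathcal B_h+h^{-2}\mathcal K_h)^{1/2}(I-\Pi_H)\mathcal B_h^{-1/2}\widetilde w\|^2$ with $\widetilde w$ ranging over the range of $\mathcal B_h^{1/2}$ on $V_h$, which after renaming $v_h := \mathcal B_h^{-1}(\ldots)$ reduces to the Rayleigh quotient above. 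A secondary subtlety is the passage from the parameter-domain grid size $h$ to the physical-domain grid size, but this is already absorbed into the constants by the remark $c^{-1}h \le h_\Omega \le ch$ preceding Theorem~\ref{thrm:appr:d:3}, so no extra work is needed there. Apart from that, everything is a direct application of the estimates proved in Section~\ref{sec:approx:inverse}.
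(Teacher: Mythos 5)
Your proposal is correct and follows essentially the same route as the paper's proof: identify $P_h\mathcal{B}_H^{-1}P_h'\mathcal{B}_h$ with the $\mathcal B$-orthogonal (equivalently, by~\eqref{eq:mcB:equiv}, $H^2$-orthogonal) projection onto $V_H$, reduce~\eqref{eq:app} via the projector's idempotence and $\mathcal B_h$-self-adjointness (the paper phrases this as $\|TT'\|\le\|T\|^2$ together with $(I-Q)^2=I-Q$) to bounding $\|(I-\Pi_H)v_h\|_{\mathcal B_h+h^{-2}\mathcal K_h}$ by $\|v_h\|_{\mathcal B_h}$, and then use $H^2$-stability for one term and Theorem~\ref{thrm:appr:d:4} with $H=2h$ for the other. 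No substantive differences.
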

\begin{proof}
    Theorem~\ref{thrm:appr:d:4} states that the $H^2$-orthogonal projector
    $\mathbf{\Pi}_{p,H}^0 : H^2_0(\Omega) \rightarrow V_H = S^0_{p,H}(\Omega)$ satisfies the approximation error estimate 
    \begin{equation*}
            |\left(I-\mathbf{\Pi}_{p,H}^0\right)u|_{H^1(\Omega)} \leq c\,H|u|_{H^2(\Omega)}\quad \forall\, u\in H^2_0(\Omega).
    \end{equation*}
    As the considered functions are in $H^2_0(\Omega)$, Lemma~\ref{lemma:BdecompEquiv2D} implies the same for
    the $\mathcal B$-orthogonal projector.
    For $u_h \in V_h = S^0_{p,h}(\Omega)$, we can rewrite this is matrix-vector notation:    
    \begin{equation*}
            \left\Vert \left(I-P_h\mathcal{B}_H^{-1}P_h'\mathcal{B}_h\right)u_h\right\Vert_{\mathcal{K}_h} \leq c\,H \left\Vert u_h\right\Vert_{\mathcal{B}_h}.
    \end{equation*}
    Using the stability of projectors, we also obtain
    \begin{equation*}
            \left\Vert \left(I-P_h\mathcal{B}_H^{-1}P_h'\mathcal{B}_h\right)u_h\right\Vert_{\mathcal{B}_h} \leq \left\Vert u_h\right\Vert_{\mathcal{B}_h}.
    \end{equation*}
    By combining these two results, we obtain using $H=2h\le ch$ that
    \begin{equation*}
            \left\Vert \left(I-P_h\mathcal{B}_H^{-1}P_h'\mathcal{B}_h\right)u_h\right\Vert_{\mathcal{B}_h+h^{-2} \mathcal{K}_h} \leq c \left\Vert u_h\right\Vert_{\mathcal{B}_h}.
    \end{equation*}
    This reads in matrix-notation as
    \begin{equation*}
            \left\Vert (\mathcal{B}_h+h^{-2} \mathcal{K}_h)^{1/2} \left(I-P_h\mathcal{B}_H^{-1}P_h'\mathcal{B}_h\right)\mathcal{B}_h^{-1/2}\right\Vert\leq c.
    \end{equation*}
    As $\|TT'\|\le\|T\|^2$, we obtain that
    \begin{equation*}
            \left\Vert (\mathcal{B}_h+h^{-2} \mathcal{K}_h)^{1/2} \left(I-P_h\mathcal{B}_H^{-1}P_h'\mathcal{B}_h\right)\left(I-P_h\mathcal{B}_H^{-1}P_h'\mathcal{B}_h\right) \mathcal{B}_h^{-1} (\mathcal{B}_h+h^{-2} \mathcal{K}_h)^{1/2} \right\Vert
    \end{equation*}
    is bounded by some constant $c$
    and, as we have $(I-Q)(I-Q)=I-Q$ for any projector $Q$, the desired statement~\eqref{eq:app}.
\end{proof}

In the two subsequent subsections we show the smoothing estimate
\begin{align}
        \| (\mathcal B_h+h^{-4}\mathcal M_h)^{-1/2} \mathcal B_h (I-\tau L_h^{-1} \mathcal B_h)^\nu (\mathcal B_h+h^{-4}\mathcal M_h)^{-1/2} \| & \le \nu^{-1} \widetilde C_S  \label{eq:smp2}
\end{align}
and the stability estimate
\begin{align}
        \| \mathcal B_h^{1/2} (I-\tau L_h^{-1} \mathcal B_h)^\nu \mathcal B_h^{-1/2} \| & \le 1 . \label{eq:stp2}
\end{align}

Estimate~\eqref{eq:smp2}, together with an $L^2-H^2$-approximation error estimate for the $\mathcal B$-orthogonal projector
would allow to prove a convergence result in the norm $\|\cdot\|_{\mathcal B_h+h^{-4}\mathcal M_h}$, where $\mathcal M_h$ denotes the mass matrix. However,
the proof of such an error estimate requires a full $H^4$-regularity assumption, which is not satisfied in
the cases of interest.

Using Hilbert space interpolation, we obtain the following lemma.
\begin{lemma}\label{lem:interpol}
    The combination of~\eqref{eq:smp2} and~\eqref{eq:stp2} yields~\eqref{eq:smp}, where $C_S$ only depends on $\widetilde C_S$.
\end{lemma}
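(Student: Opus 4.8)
The plan is to view the estimates~\eqref{eq:smp2} and~\eqref{eq:stp2} as bounds on a single family of operators and to interpolate between them. Write $X_h := (I-\tau L_h^{-1}\mathcal B_h)^\nu$ and consider the operator $\Psi := \mathcal B_h X_h \mathcal B_h^{-1}$, or equivalently work with the symmetrized version. Estimate~\eqref{eq:stp2} says that, regarded as an operator on the space $A_1$ obtained by equipping the vector space of coefficient vectors with the inner product induced by $\mathcal B_h$, the map $\mathcal B_h^{1/2} X_h \mathcal B_h^{-1/2}$ has norm at most $1$. Estimate~\eqref{eq:smp2} says that the map $\mathcal B_h X_h$, regarded as going from the space $A_2$ with inner product $\mathcal B_h + h^{-4}\mathcal M_h$ into the same space, has norm at most $\nu^{-1}\widetilde C_S$. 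The goal~\eqref{eq:smp} is the analogous bound for $\mathcal B_h X_h$ from the space with inner product $\mathcal B_h + h^{-2}\mathcal K_h$ into itself, with norm $\nu^{-1/2}C_S$.

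The key observation that makes interpolation applicable is that the intermediate space is exactly the half-way interpolation space: using~\eqref{eq:sobolev} (applied discretely, i.e.\ recognizing that $\mathcal K_h$ discretizes $(\cdot,\cdot)_{H^1}$ and $\mathcal M_h$ discretizes $(\cdot,\cdot)_{L^2}$, while $\mathcal B_h$ corresponds to $H^2$) together with the scaling identity~\eqref{eq:scaling} and the sum/intersection identity~\eqref{eq:hilbsum}, one checks that
\[
	\big[\, \mathcal B_h,\ \mathcal B_h + h^{-4}\mathcal M_h \,\big]_{1/2} \simeq \mathcal B_h + h^{-2}\mathcal K_h
\]
with equivalence constants independent of $h$ and $p$. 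Concretely, $h^{-4}\mathcal M_h$ is $h^{-4}$ times the $L^2$-form; interpolating the $H^2$-form with $h^{-4}$ times the $L^2$-form at $\theta=1/2$ gives $h^{-2}$ times the $H^1$-form by~\eqref{eq:sobolev} and~\eqref{eq:scaling}, and the intersection/sum bookkeeping~\eqref{eq:hilbsum} handles the fact that we interpolate $\mathcal B_h$ against $\mathcal B_h+h^{-4}\mathcal M_h$ rather than against $h^{-4}\mathcal M_h$ alone. Once this identification of spaces is in place, I apply the exact-interpolation property~\eqref{eq:h1} with $\theta = 1/2$, $M_1 \le 1$ from~\eqref{eq:stp2} and $M_2 \le \nu^{-1}\widetilde C_S$ from~\eqref{eq:smp2}, to obtain that $\mathcal B_h X_h$ maps the intermediate space into itself with norm $\le c\, M_1^{1/2} M_2^{1/2} = c\,\nu^{-1/2}\widetilde C_S^{1/2}$. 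Rewriting this operator-norm bound in matrix form, using that $\|TT'\| = \|T\|^2$ to pass between the one-sided and symmetrized formulations, yields~\eqref{eq:smp} with $C_S := c\,\widetilde C_S^{1/2}$ depending only on $\widetilde C_S$ (the constant $c$ from~\eqref{eq:h1} depends only on $\theta=1/2$).

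The main obstacle is the first step: verifying that the discrete spaces $(\mathbb R^N,\mathcal B_h)$, $(\mathbb R^N,\mathcal K_h)$, $(\mathbb R^N,\mathcal M_h)$ behave under interpolation the way the continuous Sobolev spaces do, with constants uniform in $h$ and $p$. Since all three inner products live on the same finite-dimensional space and are simultaneously the Gram matrices of a fixed basis for the forms $(\cdot,\cdot)_{H^2}$, $(\cdot,\cdot)_{H^1}$, $(\cdot,\cdot)_{L^2}$ restricted to $V_h$, the K-functional of the pair $(\mathcal B_h, \mathcal B_h + h^{-4}\mathcal M_h)$ is computed by the same generalized eigenvalue decomposition regardless of $h,p$, so the equivalence $[\,\mathcal B_h, \mathcal B_h+h^{-4}\mathcal M_h\,]_{1/2}\simeq \mathcal B_h + h^{-2}\mathcal K_h$ reduces to the elementary scalar fact $\min\{a, a+b\}\cdot\max\{a,a+b\}^{-1}\cdot(\sqrt{a}\,\sqrt{a+b})$-type bounds $\min(1,t)\le \sqrt{t}\le \ldots$, combined with $h^{-2}\mathcal K_h \le c(\mathcal B_h + h^{-4}\mathcal M_h)$ and $\mathcal B_h \le c\, h^{-2}\mathcal K_h$ on $V_h$ — the latter being a Poincaré–Friedrichs-type inequality on the discrete space, available since functions in $V_h\subset H^2_0(\Omega)$ satisfy full homogeneous boundary conditions. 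Everything else is the routine translation between the operator-theoretic statement~\eqref{eq:h1} and the symmetric matrix-norm statements~\eqref{eq:smp2}–\eqref{eq:smp}.
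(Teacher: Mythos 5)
Your overall strategy coincides with the paper's: recast \eqref{eq:stp2} and \eqref{eq:smp2} as two endpoint bounds for one operator, interpolate at $\theta=1/2$ via \eqref{eq:h1} together with \eqref{eq:hilbdual}, identify the midpoint space with $\mathcal B_h+h^{-2}\mathcal K_h$ through \eqref{eq:hilbsum}, \eqref{eq:scaling} and \eqref{eq:sobolev}, and read off $C_S=c\,\widetilde C_S^{1/2}$. One imprecision in the setup: \eqref{eq:smp2} bounds $\mathcal B_h X_h$ (with $X_h:=(I-\tau L_h^{-1}\mathcal B_h)^\nu$) as a map from the space with inner product $\mathcal B_h+h^{-4}\mathcal M_h$ into its \emph{dual}, not ``into the same space''; to make \eqref{eq:h1} applicable you must put \eqref{eq:stp2} into the same dual-pair form, $\|\mathcal B_h X_h u\|_{\mathcal B_h^{-1}}\le\|u\|_{\mathcal B_h}$, so that both endpoint estimates concern the identical operator between compatible couples. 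This is how the paper phrases it (as bounds in $[H^2]'$ and $[H^2\cap h^{-4}L^2]'$).

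The genuine problem is your final paragraph. The two-sided equivalence $[\mathcal B_h,\mathcal B_h+h^{-4}\mathcal M_h]_{1/2}\simeq\mathcal B_h+h^{-2}\mathcal K_h$ is more than the lemma needs, and your argument for it does not go through: $\mathcal K_h$ is a third quadratic form that is not simultaneously diagonalized with the other two, so the scalar reduction is unavailable; sandwiching $\mathcal B_h+h^{-2}\mathcal K_h$ between the two endpoint forms would not identify it with the $\theta=1/2$ interpolation space anyway; and the inequality $\mathcal B_h\le c\,h^{-2}\mathcal K_h$ is an inverse estimate, not a Poincar\'e--Friedrichs inequality, whose constant on the full spline space $V_h$ grows with $p$ --- invoking it would forfeit precisely the $p$-robustness the lemma is meant to transport. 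Fortunately only one direction is required: since the dual-norm comparison follows from the primal one, \eqref{eq:smp} is implied by the interpolated bound once $\|u\|_{[\mathcal B_h,\,\mathcal B_h+h^{-4}\mathcal M_h]_{1/2}}\le c\,\|u\|_{\mathcal B_h+h^{-2}\mathcal K_h}$ is known, and this single embedding is exactly what \eqref{eq:hilbsum} combined with \eqref{eq:scaling} and \eqref{eq:sobolev} delivers, with no inverse estimate at all. Delete the last paragraph and rely on the cited identities, and your proof matches the paper's.
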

\begin{proof}
        First observe that Lemma~\ref{lemma:BdecompEquiv2D}, \eqref{eq:smp2} and~\eqref{eq:stp2} yield
        \begin{align*}
           & \| \mathcal B_h(I-\tau L_h^{-1} \mathcal B_h)^\nu u \|_{ [H^2(\Omega)\cap h^{-4}L^2(\Omega)]' }  \le \nu^{-1} \widetilde C_S  \|u\|_{ H^2(\Omega)\cap h^{-4}L^2(\Omega) },\\
           & \| \mathcal B_h(I-\tau L_h^{-1} \mathcal B_h)^\nu u \|_{ [H^2(\Omega)]' } \le |u|_{ H^2(\Omega) }
			\qquad \forall\, u\in V_h,
        \end{align*}
	where $\mathcal B_h$ and $L_h: V_h\rightarrow V_h'$ denote the operator interpretations of the corresponding matrices.
        Using~\eqref{eq:h1} for $\theta=1/2$, \eqref{eq:hilbdual}, \eqref{eq:hilbsum}, \eqref{eq:scaling} and~\eqref{eq:sobolev}, we obtain
        \begin{align*}
            | \mathcal B_h(I-\tau L_h^{-1} \mathcal B_h)^\nu u |_{ [H^2(\Omega)\cap h^{-2}H^1(\Omega)]' }  \le c \widetilde C_S^{1/2} \nu^{-1/2}   |u|_{ H^2(\Omega)\cap h^{-2}H^1(\Omega) },
        \end{align*}
	where $C_S:= c \widetilde C_S^{1/2}$ only depends on $\widetilde C_S$. This directly implies~\eqref{eq:smp}.
\end{proof}

\subsection{Gauss-Seidel smoother}

The most obvious choice of a multigrid smoother is the (symmetric) Gauss-Seidel method.
For simplicity, we restrict ourselves to the symmetric Gauss-Seidel smoother, consisting of one forward Gauss-Seidel
sweep and one backward Gauss-Seidel sweep. Let $\mathcal{B}_h$ be composed into $\mathcal{B}_h = D_h - C_h - C_h'$,
where $C_h$ is a  (strict) left-lower triangular matrix and $D_h$ is a diagonal matrix.
Then, the symmetric Gauss-Seidel method is represented by
\begin{align*}
        L_h := (D_h-C_h)D_h^{-1}(D_h-C_h') = \mathcal{B}_h+C_hD_h^{-1}C_h',
\end{align*}
see, e.g.,~\cite[Note~6.2.26]{hackbusch2013multi}.
Using standard arguments, we can show as follows.
\begin{lemma}
    The matrix $L_h$ satisfies
    \begin{equation}\label{eq:pinch0}
        \mathcal B_h \le L_h \le \mathcal{B}_h+ c(p) h^{-4} \mathcal{M}_h,
    \end{equation}
    where $c(p)$ is independent of the grid size $h$, but depends on the spline degree $p$ and the geometry transformation $\textbf{G}$.
\end{lemma}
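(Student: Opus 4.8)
The plan is to establish the two-sided spectral estimate \eqref{eq:pinch0} by analyzing the splitting $L_h = \mathcal B_h + C_h D_h^{-1} C_h'$. The lower bound $\mathcal B_h \le L_h$ is immediate: the correction $C_h D_h^{-1} C_h'$ is symmetric positive semi-definite because $D_h$ is the diagonal of a symmetric positive definite matrix, hence has positive entries, so $D_h^{-1}$ is positive definite and $C_h D_h^{-1} C_h' \ge 0$. The real work is the upper bound, i.e., showing $C_h D_h^{-1} C_h' \le c(p)\, h^{-4}\mathcal M_h$.

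For the upper bound I would proceed in three steps. First, bound $C_h D_h^{-1} C_h'$ by a multiple of $D_h$ itself: since $C_h$ is the strict lower-triangular part of $\mathcal B_h$ and each row of $\mathcal B_h$ has at most a fixed number $N(p) = (2p+1)^d$ of nonzero entries (bandedness of B-spline stiffness matrices with maximum smoothness), a Schur/Gershgorin-type argument (or simply $\|C_h D_h^{-1} C_h'\| \le \|D_h^{-1/2} C_h' \|_?^2$ combined with counting nonzeros) gives $C_h D_h^{-1} C_h' \le c(p)\, D_h$, where $c(p)$ absorbs the band count. Second, bound the diagonal $D_h$ by $h^{-4}\mathcal M_h$: the diagonal entry $(\mathcal B_h)_{ii} = (\varphi_i,\varphi_i)_{\mathcal B(\Omega)} \le |\varphi_i|_{H^2(\Omega)}^2$ can be controlled by the inverse inequality (Theorem~\ref{theo:BiInv}, suitably generalized to the tensor-product space on $\Omega$ via Assumption~\ref{ass:GeoEqui}) applied to the single basis function $\varphi_i$, giving $(\mathcal B_h)_{ii} \le c\, h^{-4}\|\varphi_i\|_{L^2(\Omega)}^2 = c\, h^{-4}(\mathcal M_h)_{ii}$; hence $D_h \le c\, h^{-4}\,\mathrm{diag}(\mathcal M_h)$. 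Third, bound $\mathrm{diag}(\mathcal M_h)$ by $\mathcal M_h$ up to a constant: the mass matrix of a locally supported basis satisfies $\mathrm{diag}(\mathcal M_h) \le c(p)\,\mathcal M_h$ again by a bandedness argument (the number of basis functions overlapping a given one is bounded by $N(p)$, so $\mathcal M_h \ge c(p)^{-1}\mathrm{diag}(\mathcal M_h)$ by Gershgorin applied to $\mathrm{diag}(\mathcal M_h)^{-1/2}\mathcal M_h \mathrm{diag}(\mathcal M_h)^{-1/2}$). Chaining these gives $C_h D_h^{-1} C_h' \le c(p)\, h^{-4}\mathcal M_h$, and adding $\mathcal B_h$ yields \eqref{eq:pinch0}.

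The main obstacle is tracking where the $p$-dependence genuinely enters and making sure no hidden $h$-dependence creeps in. The bandedness constants ($N(p)$, the Gershgorin factors for both $\mathcal M_h$ and the $C_h D_h^{-1} C_h' \le c(p) D_h$ step) depend on $p$ but not $h$; the inverse inequality constant in Theorem~\ref{theo:BiInv} is explicitly $h$-independent and $p$-independent there, but its transfer to the physical domain via Assumption~\ref{ass:GeoEqui} and to the full tensor-product basis (rather than the symmetrized univariate $S^{D,0}$ space) needs a short justification — one applies it coordinatewise and uses the tensor structure of $\varphi_i$. The geometry transformation contributes only the fixed constants $\underline\alpha,\overline\alpha$. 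So the statement's claim that $c(p)$ is $h$-independent but may depend on $p$ and $\mathbf G$ is exactly what this argument produces; the proof is "standard" in the sense that each ingredient is a routine bandedness or inverse-inequality estimate, and the only care required is bookkeeping.
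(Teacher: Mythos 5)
Your overall route is the same as the paper's: split $L_h=\mathcal B_h+C_hD_h^{-1}C_h'$, get the lower bound from positive semi-definiteness of $C_hD_h^{-1}C_h'$, bound $C_hD_h^{-1}C_h'$ by $c(p)\,D_h$ via bandedness and Gershgorin, bound $D_h$ by $c(p)\,h^{-4}\operatorname{diag}(\mathcal M_h)$ via an inverse estimate, and finally bound $\operatorname{diag}(\mathcal M_h)$ by $c(p)\,\mathcal M_h$. The first two of these steps match the paper's proof essentially verbatim.

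The genuine gap is in your last step. You claim $\operatorname{diag}(\mathcal M_h)\le c(p)\,\mathcal M_h$ follows ``by Gershgorin applied to $\operatorname{diag}(\mathcal M_h)^{-1/2}\mathcal M_h\operatorname{diag}(\mathcal M_h)^{-1/2}$'' together with bandedness. Gershgorin localizes the eigenvalues of that normalized Gram matrix in $[1-R,\,1+R]$, where $R$ is the maximal off-diagonal row sum. Bandedness bounds $R$ by $c(p)$ and hence gives the \emph{upper} bound $\mathcal M_h\le c(p)\operatorname{diag}(\mathcal M_h)$ --- the direction you do not need --- but the lower bound $1-R$ is vacuous, because for B-spline mass matrices with maximal smoothness the normalized off-diagonal row sums exceed $1$ already at moderate $p$ (they grow with $p$, since the diagonal entries decay relative to the row sums). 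What is actually required is a positive lower bound on the smallest eigenvalue of the normalized Gram matrix, i.e.\ the $L^2$-stability of the B-spline basis; this is a nontrivial result, and the paper invokes exactly here the bound $p2^p$ on the condition number of the B-spline basis. Without that ingredient your chain of inequalities does not close. A secondary, easily repaired issue: in your second step you propose to control $(\mathcal B_h)_{ii}$ by applying Theorem~\ref{theo:BiInv} to a single basis function, but that inverse inequality is proved only for $u_h\in S^{D,0}$ (vanishing even derivatives at the boundary), and an individual B-spline basis function --- and its univariate factors --- do not lie in that space. The paper instead uses the standard polynomial inverse estimate, whose constant is $p$-dependent (of order $p^8$ for the $H^2$--$L^2$ pair), which is harmless here since $c(p)$ is allowed to depend on $p$.
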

\begin{proof}
    As $C_hD_h^{-1}C_h'\ge0$, the first part of the inequality is obvious.

    Now, observe that $\mathcal B_h$ has not more than $\mathcal{O}(p^d)$ non-zero entries per row, so also
    the matrix $D_h^{-1/2} C_h D_h^{-1/2}$ has not more than $\mathcal{O}(p^d)$ non-zero entries per row.
    The absolute value of each of them is bounded by $1$ due
    to the Cauchy-Schwarz inequality. So, we obtain using Gerschorin's theorem that the eigenvalues
    of $D_h^{-1/2} C_h D_h^{-1/2}$ are bounded by $cp^d$, which implies
    \[
        L_h \le \mathcal{B}_h+ cp^{2d} D_h.
    \]
    A standard inverse estimate (cf. \cite[Theorem 3.91]{schwab:1998}) yields
    \[
        L_h \le \mathcal{B}_h+ cp^{2d+8} h^{-4} \mbox{diag } (\mathcal{M}_h) 
    \]
    where $\mbox{diag } (\mathcal{M}_h)$ is the diagonal of the mass matrix $\mathcal{M}_h$. Note that the condition number of the
    B-splines of degree $p$ is bounded by $p2^p$ (cf.~\cite{Scherer:1999}), so we obtain
    \[
        L_h \le \mathcal{B}_h+ c 2^p \,p^{2d+9}\, h^{-4} \mathcal{M}_h,
    \]
	which finishes the proof.
\end{proof}

Now, we can show the convergence of the multigrid method.
\begin{theorem}
\label{theo:GSworks}
    Let $p\in\mathbb{N}$ with $p\ge 3$ and $Hp<1$. Then, there exists a constant $c(p)$, which is indepedent
    of $h$ but depends on $p$ and $\mathbf{G}$,
    such that the two-grid method with the symmetric Gauss-Seidel smoother (with $\tau=1$)
    satisfies
    \[
                q \le \frac{c(p)}{\nu^{1/2}},
    \]
    i.e., it converges if sufficiently many smoothing steps~$\nu$ are applied.
\end{theorem}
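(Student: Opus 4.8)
The plan is to verify the two abstract ingredients~\eqref{eq:smp2} and~\eqref{eq:stp2} for the symmetric Gauss-Seidel choice $L_h = \mathcal B_h + C_h D_h^{-1} C_h'$ with $\tau = 1$, and then invoke Lemma~\ref{lem:interpol} together with the approximation property of Theorem~\ref{thrm:app}. The stability estimate~\eqref{eq:stp2} is the easier one: since $L_h \ge \mathcal B_h > 0$ by~\eqref{eq:pinch0}, the matrix $I - L_h^{-1}\mathcal B_h = L_h^{-1}C_h D_h^{-1} C_h'$ is similar (via $\mathcal B_h^{1/2}$) to a symmetric matrix whose spectrum lies in $[0,1)$, so $\|\mathcal B_h^{1/2}(I - L_h^{-1}\mathcal B_h)^\nu \mathcal B_h^{-1/2}\| \le 1$; this is a completely standard energy-norm contraction argument for a convergent splitting, e.g.\ as in~\cite{hackbusch2013multi}.

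For the smoothing estimate~\eqref{eq:smp2}, I would set $X_h := \mathcal B_h + h^{-4}\mathcal M_h$ and rewrite the quantity to be bounded as $\|X_h^{-1/2}\mathcal B_h(I - L_h^{-1}\mathcal B_h)^\nu X_h^{-1/2}\|$. The key point is the two-sided bound~\eqref{eq:pinch0}: $\mathcal B_h \le L_h \le \mathcal B_h + c(p)h^{-4}\mathcal M_h \le c(p) X_h$ (absorbing the constant), so on the one hand $\mathcal B_h \le X_h$ and on the other $X_h \le c(p) L_h$. Working in the symmetrized variables $Y := \mathcal B_h^{1/2} L_h^{-1} \mathcal B_h^{1/2}$, which satisfies $0 < Y \le I$, one has $\mathcal B_h(I - L_h^{-1}\mathcal B_h)^\nu = \mathcal B_h^{1/2}(I-Y)^\nu \mathcal B_h^{1/2}$, and the spectral bound $\|\,Y^{0}(I-Y)^\nu\| = \max_{t\in[0,1]}(1-t)^\nu = 1$ together with $\|\,Y(I-Y)^\nu\| = \max_{t\in(0,1]} t(1-t)^\nu \le (\nu+1)^{-1} \le \nu^{-1}$ is the elementary calculus fact that drives the $\nu^{-1}$ decay. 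The remaining bookkeeping is to insert the factors $X_h^{-1/2}$: from $\mathcal B_h \le X_h$ we get $\|X_h^{-1/2}\mathcal B_h^{1/2}\| \le 1$, and from $X_h \le c(p) L_h$ we get a bound involving $L_h$; combining these with the operator identity above and distributing one $\mathcal B_h$-like factor against each $X_h^{-1/2}$ while keeping the rest as $(I-Y)^\nu$ yields~\eqref{eq:smp2} with $\widetilde C_S = c(p)$. The price of the non-robustness in $p$ is exactly the constant $c(p) = c\,2^p p^{2d+9}$ from~\eqref{eq:pinch0}, which is why the final bound on $q$ carries a $p$-dependent constant.

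Finally, Lemma~\ref{lem:interpol} converts~\eqref{eq:smp2} and~\eqref{eq:stp2} into the smoothing property~\eqref{eq:smp} with $C_S = c\,\widetilde C_S^{1/2} = c(p)$, and Theorem~\ref{thrm:app} supplies the approximation property~\eqref{eq:app} with a $p$- and $h$-independent $C_A$. The generic two-grid estimate $q \le C_A C_S/\sqrt{\nu}$ quoted before Theorem~\ref{thrm:app} then gives $q \le c(p)\nu^{-1/2}$, as claimed. I expect the main obstacle to be the careful handling of the factors $X_h^{-1/2}$ in~\eqref{eq:smp2}: the operator $\mathcal B_h$ sitting in the middle does not commute with $L_h$, so one has to be slightly careful to use $\mathcal B_h \le X_h \le c(p)L_h$ in the right places — the clean way is to pass to the symmetrized variable $Y$ as above, after which everything reduces to scalar estimates of $t \mapsto (1-t)^\nu$ on $[0,1]$.
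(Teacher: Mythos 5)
Your proposal follows the paper's proof essentially verbatim: establish the two-sided bound~\eqref{eq:pinch0}, deduce the smoothing estimate~\eqref{eq:smp2} and the stability estimate~\eqref{eq:stp2} (the paper outsources the bound $\|Z(I-Z)^\nu\|\le(\nu+1)^{-1}$ for $Z=L_h^{-1/2}\mathcal B_hL_h^{-1/2}$ to \cite[Lemma~2]{HTZ:2016}, which is exactly the scalar estimate you spell out), and then combine Lemma~\ref{lem:interpol} with the approximation property of Theorem~\ref{thrm:app}. One small slip to fix: the inequality you actually derived and actually need is $L_h \le c(p)X_h$, which gives $\|X_h^{-1/2}AX_h^{-1/2}\| \le c(p)\|L_h^{-1/2}AL_h^{-1/2}\|$ for symmetric positive semidefinite $A$ via the Rayleigh quotient, not ``$X_h \le c(p)L_h$'' as written.
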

\begin{proof}
    From~\eqref{eq:pinch0}, we obtain $\tau L_h^{-1} \mathcal{B}_h \le I$ for $\tau=1$.
    \cite[Lemma~2]{HTZ:2016} implies 
    \[
            \| L_h^{-1/2} \mathcal{B}_h (I-\tau L_h^{-1} \mathcal{B}_h)^{\nu} L_h^{-1/2} \| \le c \nu^{-1},
    \]
    from which the smoothing statement~\eqref{eq:smp2} follows using~\eqref{eq:pinch0}. The stability statement~\eqref{eq:stp2}
    can be shown analogously. Lemma~\ref{lem:interpol} yields the smoothing property~\eqref{eq:smp} with $C_S=c(p)\nu^{-1/2}$.
    
    Theorem~\ref{thrm:app} yields the approximation property~\eqref{eq:app} with $C_A=c$. The combination of smoothing property and
    approximation property yields convergence.
\end{proof}

\subsection{Subspace corrected mass smoother}
\label{ssec:subspace}
We now construct a smoother that satisfies
\begin{equation}\label{eq:pinch}
       c^{-1} \mathcal B_h \le L_h \le c (\mathcal B_h + h^{-4} \mathcal M_h),
\end{equation}
where the constant $c$ is independent of $p$ and $h$ (Notation~\ref{not:2:1}).
To reduce the complexity of the smoother, we construct the local smoothers not around the
original stiffness matrix $\mathcal{B}_h$, representing $(\cdot,\cdot)_{\mathcal{B}(\Omega)}$, but around
the spectrally equivalent matrix $\bar{\mathcal{B}}_h$, representing $(\cdot,\cdot)_{\bar{\mathcal{B}}(\widehat{\Omega})}$. 
Moreover, we observe that the original mass matrix $\mathcal M_h$ is spectrally equivalent
to $\bar{\mathcal{M}}_h$, representing $(\cdot,\cdot)_{L^2(\widehat{\Omega})}$.
Using the spectral equivalence, we obtain that the condition 
\begin{equation}\label{eq:pinch1}
       c^{-1}  \bar{\mathcal{B}}_h \le L_h \le c ( \bar{\mathcal{B}}_h + h^{-4} \bar{\mathcal{M}}_h),
\end{equation}
is equivalent to~\eqref{eq:pinch}.

We follow the ideas of the paper~\cite{hofreither2016robust} and construct local smoothers $L_\alpha$
for any of the spaces $V_{h,\alpha}:= S^{D,\alpha}\cap S^0$, where $\alpha=(\alpha_1,\ldots,\alpha_d)\in\{0,1\}^d$ is a multi-index.
These local contributions are chosen such that they satisfy the corresponding local condition
\begin{equation}\label{eq:pinch2}
       c^{-1} \bar{\mathcal{B}}_\alpha \le L_\alpha \le c (\bar{\mathcal{B}}_\alpha + h^{-4} \bar{\mathcal M}_\alpha),
\end{equation}
where
\[
            \bar{\mathcal{B}}_\alpha := \mathbf{Q}_{h,\alpha} \bar{\mathcal{B}}_h (\mathbf{Q}_{h,\alpha})' \qquad\mbox{and}\qquad
            \bar{\mathcal M}_\alpha := \mathbf{Q}_{h,\alpha} \bar{\mathcal M}_h (\mathbf{Q}_{h,\alpha})'
\]
and $\mathbf{Q}_{h,\alpha}$ is the matrix representation of the canonical embedding $V_{h,\alpha}\rightarrow V_h$.
The canonical embedding has tensor product structure, i.e., $Q_{h,\alpha_1}\otimes\cdots\otimes Q_{h,\alpha_d}$,
where the $Q_{h,\alpha_i}$ are the matrix representations of the corresponding univariate embeddings.

In the two-dimensional case, $\bar{\mathcal{B}}_h$ and $\bar{\mathcal M}_h$ have the representation
\begin{equation*}
\bar{\mathcal{B}}_h = B \otimes M + M \otimes B
\quad\mbox{and}\quad
\bar{\mathcal{M}}_h = M \otimes M,
\end{equation*}
where $M$ and $B$ are the corresponding univariate mass and stiffness matrices.
Restricting $\bar{\mathcal{B}}_h$ to the subspace $V_h^{(\alpha_1,\alpha_2)}$ gives
\begin{equation*}
\bar{\mathcal{B}}_{\alpha_1,\alpha_2} = B_{\alpha_1} \otimes M_{\alpha_2} + M_{\alpha_1} \otimes B_{\alpha_2},
\end{equation*}
where $B_{\alpha_i} = Q_{h,\alpha_i} B (Q_{h,\alpha_i})'$ and $M_{\alpha_i} = Q_{h,\alpha_i} M (Q_{h,\alpha_i})'$.

The inverse inequality for $S^{D,0}$ (Theorem \ref{theo:BiInv}), allows us to estimate
\begin{equation*}
B_0\leq \sigma M_0,
\end{equation*}
where $\sigma = 144 h^{-4}$. Using this, we define the smoothers $L_{\alpha_1,\alpha_2}$ as follows and
obtain estimates for them as follows:
\begin{equation}\label{eq:28a}
\begin{aligned}
\bar{\mathcal{B}}_{00} &\leq 2\sigma M_0 \otimes M_0 &=: L_{00} \le c(\bar{\mathcal{B}}_{00}+h^{-4} \mathcal{M}_{00}),\\
\bar{\mathcal{B}}_{01} &\leq   M_0 \otimes\left(\sigma M_1 +B_1\right) &=: L_{01} \le c(\bar{\mathcal{B}}_{01}+h^{-4} \mathcal{M}_{01}),\\
\bar{\mathcal{B}}_{10} &\leq  \left(B_1  + \sigma M_1 \right)\otimes M_0 &=: L_{10}\le c(\bar{\mathcal{B}}_{10}+h^{-4} \mathcal{M}_{10}),\\
\bar{\mathcal{B}}_{11} &\leq  B_1 \otimes M_1  + M_1 \otimes B_1 &=: L_{11}\le c(\bar{\mathcal{B}}_{11}+h^{-4} \mathcal{M}_{11}).
\end{aligned}
\end{equation}
The extension to three and more dimensions is completely straight-forward (cf.~\cite{hofreither2016robust}).
For each of the subspaces $V_{h,\alpha}$, we have defined a symmetric and positive
definite smoother $L_\alpha$. The overall smoother is given by 
\begin{equation*}
L_h:=\sum_{\alpha\in\{0,1\}^d} (\mathbf{Q}^{D,\alpha})' L_{\alpha} \mathbf{Q}^{D,\alpha},
\end{equation*}
where $\mathbf{Q}^{D,\alpha}= \bar{\mathcal{M}}_{\alpha}^{-1}(\mathbf{Q}_{h,\alpha})'\bar{\mathcal{M}}_{h}$ is the matrix representation of the $L^2$ 
projection from $V_h$ to $V_{h,\alpha}$. Completely
analogous to~\cite[Section~5.2]{hofreither2016robust}, we obtain
\begin{equation*}
L^{-1}_h=\sum_{\alpha\in\{0,1\}^d} \mathbf{Q}_{h,\alpha} L^{-1}_{\alpha} (\mathbf{Q}_{h,\alpha})'.
\end{equation*}

\begin{remark}
How to realize the smoother computationally efficient, is discussed in \cite[Section 5]{hofreither2016robust}.
\end{remark}
The local estimates from~\eqref{eq:pinch2} can be carried over to the whole smoother $L_h$
analogous to the results from \cite{hofreither2016robust}.
\begin{theorem}
\label{theo:alphaCondInv}
Let $p\in \mathbb{N}$ with $p\ge 3$ and $hp<1$.
Assume that
\begin{equation}\label{eq:theo:alphaCondInv}
c^{-1} \bar{\mathcal{B}}_\alpha   \leq L_{\alpha} \leq c(\bar{\mathcal{B}}_\alpha + h^{-4} \bar{\mathcal M}_\alpha )
\qquad\forall\, \alpha \in \lbrace 0,1\rbrace^d.
\end{equation}
Then, the subspace corrected mass smoother satisfies~\eqref{eq:pinch1}.
\end{theorem}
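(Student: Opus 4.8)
The plan is to show that the global smoother $L_h$, assembled from the local contributions $L_\alpha$ via the $L^2$-orthogonal projections $\mathbf{Q}^{D,\alpha}$, inherits the spectral bounds~\eqref{eq:pinch2} in the form~\eqref{eq:pinch1}. The key structural fact I would use is that the splitting $S(\widehat\Omega)=\bigoplus_\alpha S^{D,\alpha}(\widehat\Omega)$ is $L^2$-orthogonal (Theorem~\ref{theo:stableH1aD}, equation~\eqref{eq:dd:splitting:equation}) and stable in the $\bar{\mathcal B}$-norm (equation~\eqref{eq:dd:splitting:inequality}). Recall that $L_h = \sum_\alpha (\mathbf{Q}^{D,\alpha})' L_\alpha \mathbf{Q}^{D,\alpha}$ with $L_h^{-1} = \sum_\alpha \mathbf{Q}_{h,\alpha} L_\alpha^{-1} (\mathbf{Q}_{h,\alpha})'$, so it is natural to argue in terms of $L_h^{-1}$ for one direction and $L_h$ for the other.

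First I would prove the upper bound $L_h \le c(\bar{\mathcal B}_h + h^{-4}\bar{\mathcal M}_h)$. For any vector $u_h$, write $L_h$ as a sum over $\alpha$, apply the local upper bound from~\eqref{eq:theo:alphaCondInv} to get $(\mathbf{Q}^{D,\alpha})' L_\alpha \mathbf{Q}^{D,\alpha} \le c\, (\mathbf{Q}^{D,\alpha})'(\bar{\mathcal B}_\alpha + h^{-4}\bar{\mathcal M}_\alpha)\mathbf{Q}^{D,\alpha}$, and then sum. For the mass part, $\sum_\alpha (\mathbf{Q}^{D,\alpha})' \bar{\mathcal M}_\alpha \mathbf{Q}^{D,\alpha}$ equals $\bar{\mathcal M}_h$ exactly because the splitting is $L^2$-orthogonal (this is the matrix form of~\eqref{eq:dd:splitting:equation}). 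For the stiffness part, $\sum_\alpha (\mathbf{Q}^{D,\alpha})' \bar{\mathcal B}_\alpha \mathbf{Q}^{D,\alpha} \le c\,\bar{\mathcal B}_h$ is precisely the matrix form of the right inequality in~\eqref{eq:dd:splitting:inequality} (the $\bar{\mathcal B}$-stability of the splitting). Combining these gives the upper bound.

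Next I would prove the lower bound $c^{-1}\bar{\mathcal B}_h \le L_h$, which is cleaner to do via $L_h^{-1}$. Using $L_h^{-1} = \sum_\alpha \mathbf{Q}_{h,\alpha} L_\alpha^{-1}(\mathbf{Q}_{h,\alpha})'$ and the local lower bound $L_\alpha \ge c^{-1}\bar{\mathcal B}_\alpha$, i.e. $L_\alpha^{-1} \le c\,\bar{\mathcal B}_\alpha^+$ on the range of $\mathbf{Q}_{h,\alpha}$, together with the left inequality in~\eqref{eq:dd:splitting:inequality}, one obtains $L_h^{-1} \le c\,\bar{\mathcal B}_h^{-1}$, which is equivalent to $c^{-1}\bar{\mathcal B}_h \le L_h$. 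Here one must be slightly careful with the pseudo-inverse $\bar{\mathcal B}_\alpha^+$ since $\bar{\mathcal B}_\alpha$ may be singular on the subspaces where $\alpha_k=0$ (constants in the $D$-directions are in its kernel after restriction); but $L_\alpha$ as defined in~\eqref{eq:28a} was chosen to be positive definite, and the lower bound in~\eqref{eq:pinch2} is understood on the relevant range, so this is a matter of bookkeeping rather than a genuine obstacle. This is the standard subspace-correction argument (Schwarz theory), and the whole proof mirrors the corresponding development in~\cite[Section~5]{hofreither2016robust}.

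The main obstacle, such as it is, is organizing the tensor-product identities cleanly: one needs that the matrix representation of the $L^2$-projection $\mathbf{Q}^{D,\alpha}$ composed appropriately with $\mathbf{Q}_{h,\alpha}$ behaves as expected, and that $\sum_\alpha (\mathbf{Q}^{D,\alpha})'\bar{\mathcal M}_\alpha \mathbf{Q}^{D,\alpha} = \bar{\mathcal M}_h$ exactly — this uses $\mathbf{Q}^{D,\alpha} = \bar{\mathcal M}_\alpha^{-1}(\mathbf{Q}_{h,\alpha})'\bar{\mathcal M}_h$ and the $L^2$-orthogonality of the direct sum. Since both the $L^2$-equality~\eqref{eq:dd:splitting:equation} and the $\bar{\mathcal B}$-equivalence~\eqref{eq:dd:splitting:inequality} have already been established in Theorem~\ref{theo:stableH1aD}, the argument reduces to translating them into matrix form and inserting the local bounds~\eqref{eq:theo:alphaCondInv}. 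Thus I expect no serious difficulty, and would simply refer to the analogous computation in~\cite{hofreither2016robust} for the routine parts.
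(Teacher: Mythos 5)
Your proposal is correct and takes essentially the same route as the paper: both bounds follow by inserting the local estimates~\eqref{eq:theo:alphaCondInv} into the stable splitting of Theorem~\ref{theo:stableH1aD}, using~\eqref{eq:dd:splitting:equation} for the mass part and~\eqref{eq:dd:splitting:inequality} for the stiffness part. The only cosmetic difference is that the paper proves the lower bound directly on the quadratic forms, via $\bar{\mathcal{B}}_h \le c\sum_{\alpha}(\mathbf{Q}^{D,\alpha})'\bar{\mathcal{B}}_\alpha\mathbf{Q}^{D,\alpha}\le c\sum_{\alpha}(\mathbf{Q}^{D,\alpha})'L_{\alpha}\mathbf{Q}^{D,\alpha}= cL_h$, rather than through $L_h^{-1}$, which sidesteps your pseudo-inverse bookkeeping entirely --- a worry that is moot in any case, since $\bar{\mathcal{B}}_h$ is positive definite on $V_h\subset H^2_0(\widehat{\Omega})$ and hence so is every restriction $\bar{\mathcal{B}}_\alpha$.
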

\begin{proof}
Using Theorem \ref{theo:stableH1aD} and~\eqref{eq:theo:alphaCondInv}, we obtain 
\begin{equation*}
\bar{\mathcal{B}}_h  \leq c \sum_{\alpha\in\{0,1\}^d} (\mathbf{Q}^{D,\alpha})'\bar{\mathcal{B}}_h\mathbf{Q}^{D,\alpha} \leq c \sum_{\alpha\in\{0,1\}^d} (\mathbf{Q}^{D,\alpha})'L_h\mathbf{Q}^{D,\alpha}  = c L_h
\end{equation*}
and
\begin{equation*}
L_h   \leq c\sum_{\alpha\in\{0,1\}^d}  (\mathbf{Q}^{D,\alpha})'(\bar{\mathcal{B}}_h+ h^{-4}\bar{\mathcal M}_h)\mathbf{Q}^{D,\alpha} \leq c (\bar{\mathcal{B}}_h+ h^{-4}\bar{\mathcal M}_h),
\end{equation*}
which finishes the proof.
\end{proof}

Now, we can show the robust convergence of the multigrid method.
\begin{theorem}
\label{theo:massSmotherWorks}
    Let $p\in \mathbb{N}$ with $p\ge 3$ and $Hp<1$. Then, there exist two constants $\tau_0$ and $c$ independent of $h$ and $p$
    (cf. Notation~\ref{not:2:1})
    such that for any $\tau\in(0,\tau_0]$ the two-grid method with the subspace corrected mass smother
    satisfies
    \[
                q \le \frac{c\tau^{-1/2}}{\nu^{1/2}},
    \]
    i.e., it converges if sufficiently many smoothing steps~$\nu$ are applied.
\end{theorem}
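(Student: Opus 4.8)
The plan is to combine the approximation property (Theorem~\ref{thrm:app}) with a smoothing property derived from the spectral equivalence~\eqref{eq:pinch} provided by the subspace corrected mass smoother, exactly in the abstract framework set up before the two smoother subsections. First I would invoke Theorems~\ref{theo:alphaCondInv} and the local estimates~\eqref{eq:28a} (together with their $d$-dimensional analogues) to conclude that the smoother $L_h$ satisfies~\eqref{eq:pinch1}, hence~\eqref{eq:pinch}, with a constant $c$ independent of both $h$ and $p$. This is the place where robustness in the spline degree enters, and it rests on the degree-robust inverse inequality of Theorem~\ref{theo:BiInv} and the degree-robust stable splitting of Theorem~\ref{theo:stableH1aD}; both have already been established, so here it is just bookkeeping.

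Next I would turn~\eqref{eq:pinch} into the smoothing estimate~\eqref{eq:smp2} and the stability estimate~\eqref{eq:stp2}. From the lower bound $c^{-1}\mathcal{B}_h \le L_h$ we get, after rescaling $\tau$, that $\tau L_h^{-1}\mathcal{B}_h \le I$ for all $\tau \in (0,\tau_0]$ with $\tau_0$ depending only on that spectral constant $c$; this makes $I - \tau L_h^{-1}\mathcal{B}_h$ a nonnegative contraction in the $\mathcal{B}_h$-inner product, which immediately gives~\eqref{eq:stp2}. For~\eqref{eq:smp2}, I would apply the standard smoothing lemma \cite[Lemma~2]{HTZ:2016} to the pair $(L_h,\mathcal{B}_h)$ to obtain $\|L_h^{-1/2}\mathcal{B}_h(I-\tau L_h^{-1}\mathcal{B}_h)^\nu L_h^{-1/2}\| \le c\,\tau^{-1}\nu^{-1}$, and then replace $L_h$ on both sides using the upper bound $L_h \le c(\mathcal{B}_h + h^{-4}\mathcal{M}_h)$ from~\eqref{eq:pinch}; this yields~\eqref{eq:smp2} with $\widetilde C_S = c\,\tau^{-1}$. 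Feeding~\eqref{eq:smp2} and~\eqref{eq:stp2} into Lemma~\ref{lem:interpol} produces the smoothing property~\eqref{eq:smp} with $C_S = c\,\widetilde C_S^{1/2} = c\,\tau^{-1/2}$.

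Finally, Theorem~\ref{thrm:app} supplies the approximation property~\eqref{eq:app} with a constant $C_A = c$ independent of $h$ and $p$ (this needs only $p \ge 3$ and $Hp<1$, which are our hypotheses). Combining the two properties in the $\mathcal{B}_h + h^{-2}\mathcal{K}_h$ norm as recorded in Section~\ref{sec:MGconstruction} gives
\[
    q \le \frac{C_A C_S}{\sqrt{\nu}} \le \frac{c\,\tau^{-1/2}}{\sqrt{\nu}},
\]
which is the claimed bound; convergence of the W-cycle then follows from the standard theory \cite{hackbusch2013multi}.

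The main obstacle — though it has in fact already been dealt with in the preceding sections — is establishing~\eqref{eq:pinch} with a constant independent of $p$, i.e., the degree-robust two-sided bound on $L_h$. Once Theorems~\ref{theo:BiInv}, \ref{theo:stableH1aD} and~\ref{theo:alphaCondInv} are in hand, the remainder is a routine assembly of known abstract multigrid estimates, with the only mild subtlety being the careful tracking of the $\tau$-dependence through the smoothing lemma and the Hilbert-space interpolation step of Lemma~\ref{lem:interpol}.
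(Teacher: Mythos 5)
Your proposal is correct and follows essentially the same route as the paper: establish the two-sided spectral bound~\eqref{eq:pinch} via~\eqref{eq:28a} and Theorem~\ref{theo:alphaCondInv}, derive~\eqref{eq:smp2} from \cite[Lemma~2]{HTZ:2016} together with the upper bound in~\eqref{eq:pinch}, obtain~\eqref{eq:stp2} from the contraction property, pass through the interpolation Lemma~\ref{lem:interpol}, and combine with the approximation property of Theorem~\ref{thrm:app}. The only cosmetic difference is that you record $C_S = c\,\tau^{-1/2}$ while the paper writes $C_S = c\,\tau^{-1/2}\nu^{-1/2}$; your bookkeeping is actually the cleaner reading of~\eqref{eq:smp}, and the final bound on $q$ is identical.
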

\begin{proof}
    \eqref{eq:28a} and Theorem~\ref{theo:alphaCondInv} show~\eqref{eq:pinch1}, the spectral equivalence of $\bar{\mathcal{B}}_h$ and $\mathcal{B}_h$
    then shows~\eqref{eq:pinch}. From that estimate, we obtain
    $L_h \ge c^{-1} \mathcal{B}_h$, which implies that there is some constant $\tau_0$
		such that $\tau L_h^{-1} \mathcal{B}_h \le I$ for all $\tau \in (0, \tau_0]$.
    \cite[Lemma~2]{HTZ:2016} implies
    \[
            \| L_h^{-1/2} \mathcal{B}_h (I-\tau L_h^{-1} \mathcal{B}_h)^{\nu} L_h^{-1/2} \| \le c \tau^{-1} \nu^{-1},
    \]
    from which the smoothing statement~\eqref{eq:smp2} follows using~\eqref{eq:pinch}. The stability statement~\eqref{eq:stp2}
    can be shown analogously. Lemma~\ref{lem:interpol} yields the smoothing property~\eqref{eq:smp} with $C_S=c\tau^{-1/2}\nu^{-1/2}$.
    
    Theorem~\ref{thrm:app} yields the approximation property~\eqref{eq:app} with $C_A=c$. The combination of smoothing property and
    approximation property yields convergence.
\end{proof}

\begin{remark}\label{rem:second}
The multigrid methods discussed in this paper can be applied also to the second biharmonic problem
\[
\Delta^2 u = f  \text{ in }\Omega \quad\text{ with }\quad u = \Delta u = 0 \text{ on } \Gamma.
\]
\end{remark}
\begin{remark}\label{rem:third}
The multigrid methods discussed in this paper can be applied also to the third biharmonic problem
\[
\Delta^2 u = f  \text{ in }\Omega \quad\text{ with }\quad \nabla u \cdot \mathbf{n}= \nabla \Delta u \cdot \mathbf{n} = 0 \text{ on } \Gamma
\]
on the parameter domain.
In this case, the subspace corrected mass smoother has to be based on the splitting of $S$ into the space of functions in $S$ whose odd derivatives vanish on the boundary and its orthogonal complement. This is the same splitting which was used
in~\cite{hofreither2016robust}. How to transform a strong formulation of the boundary condition to the physical domain, is not obvious.
\end{remark}

\section{Numerical results}\label{sec:num}

In this section, we compare multigrid solvers based on the two smoothers introduced
in Section~\ref{sec:MGconstruction}, the symmetric Gauss-Seidel smoother and the subspace corrected mass smoother. 
This is done first for a problem with a trivial geometry transformation, then for a problem with a nontrivial geometry transformation.

All numerical experiments are implemented using the G+Smo library~\cite{gismoweb}.

\subsection{Experiments on the parameter domain}
We solve the model problem on the unit square and the unit cube; that is,
\begin{equation*}
\Delta^2 u = f \quad \text{in}\quad \Omega:=(0,1)^d \quad \text{with}\quad u = \nabla u\cdot \mathbf{n} = 0\quad \text{on}\quad\Gamma,
\end{equation*}
for $d=2,3$ with the right-hand side
\begin{equation*}
f(x_1,\ldots,x_d) := d^2\pi^4 \prod^d_{j=1}\sin{\left(\pi x_j\right)}.
\end{equation*}
The problem is discretized using tensor product B-splines with equidistant knot spans and maximum continuity.

\newcommand{\xx}{OoM}

\begin{table}[ht]
\begin{center} 
  \begin{tabular}{| c || c | c | c | c | c | c | c | c |}
    \hline
    $\ell$ $\backslash\; p$ & \hspace{.2cm}3\hspace{.2cm} & \hspace{.2cm}4\hspace{.2cm} & \hspace{.2cm}5\hspace{.2cm} & \hspace{.2cm}6\hspace{.2cm} & \hspace{.2cm}7\hspace{.2cm} & \hspace{.2cm}8\hspace{.2cm} & \hspace{.2cm}9\hspace{.2cm} & \hspace{.15cm}10\hspace{.15cm} \\ \hline \hline
\multicolumn{9}{|l|}{Symmetric Gauss-Seidel} \\ \hline 
    5    &  5 &     9&    18&    32&    60&   117& 204 & 389 \\  \hline
    6    &  5 &     9&    18&    33&    59&   115& 215 & 400 \\  \hline
    7    &  5 &     9&    17&    32&    60&   107& 210 & 395 \\  \hline
    8    &  5 &     9&    17&    32&    60&   112& 197 & 375 \\  \hline\hline
\multicolumn{9}{|l|}{Subspace corrected mass smoother } \\ \hline
    5    & 40 &    39&    38&    35&    33&   30& 28 & 26 \\  \hline
    6    & 41 &    41&    41&    40&    38&   37& 35 & 34 \\  \hline
    7    & 41 &    42&    42&    41&    40&   39& 37 & 36 \\  \hline
    8    & 42 &    42&    42&    42&    41&   39& 38 & 37 \\  \hline
  \end{tabular}
  \caption{Iteration counts for the unit square.}
  \label{t:Para2D}
\end{center}
\end{table}

\begin{table}[ht]
\begin{center} 
 \begin{tabular}{| c || c | c | c | c | c |}
   \hline
   $\ell$ $\backslash\; p$ & \hspace{.3cm}3\hspace{.3cm} & \hspace{.3cm}4\hspace{.3cm} & \hspace{.3cm}5\hspace{.3cm} & \hspace{.3cm}6\hspace{.3cm} & \hspace{.3cm}7\hspace{.3cm} \\ \hline \hline
\multicolumn{6}{|l|}{Symmetric Gauss-Seidel} \\ \hline \hline
   3    & 11 &    29&     81&   217&   676 \\  \hline
   4    & 12 &    31&     83&   218&   575 \\  \hline
   5    & 13 &    32&     82&   213&   537 \\  \hline
   6    & 13 &    32&     83&   211&   528 \\  \hline \hline
\multicolumn{6}{|l|}{Subspace corrected mass smoother } \\  \hline 
   3    & 33 &    23&    18&    16&    15 \\  \hline
   4    & 45 &    41&    36&    32&    28 \\  \hline
   5    & 50 &    50&    48&    46&    43 \\  \hline
   6    & 52 &    53&    52&    51&    49 \\  \hline
 \end{tabular}
 \caption{Iteration counts for the unit cube.}
 \label{t:Para3D}
\end{center}
\end{table}

We solve the resulting system using a conjugate gradient (CG) solver,
preconditioned with one multigrid V-cycle with $1$ pre and $1$ post smoothing step.
When using the W-cycle, which is covered by the convergence theory, one obtains
comparable iteration counts; as the V-cycle is more efficient, we present our results for that case.
When using the \emph{Gauss-Seidel smoother}, we perform the multigrid method directly on the system matrix $\mathcal{B}_h$.
When using the \emph{subspace corrected mass smoother}, we perform the multigrid method on the auxiliary operator $\bar{\mathcal{B}}_h$,
representing the reduced inner product $(\cdot,\cdot)_{\bar{\mathcal{B}}(\Omega)}$. Here, we use 
that the matrices $\mathcal{B}_h$ and $\bar{\mathcal{B}}_h$ are spectrally equivalent with constants independent of $p$ and $h$.
For the subspace corrected mass smoother, we choose $\sigma^{-1} := 0.015h^{4}$ for $d=2$ and 
$\sigma^{-1} := 0.020h^{4}$ for $d=3$. In all cases, we choose $\tau:=1$.

The initial guess is a random vector.
Tables~\ref{t:Para2D} and \ref{t:Para3D} show  the number of iterations needed to
reduce the initial residual by a factor of $10^{-8}$ for the unit square and the unit cube. 
We do the experiments for several choices of the spline degree $p$ and several
uniform refinement levels $\ell$. (The refinement level $\ell=0$ corresponds to the
domain consisting only of one element.)
On the finest considered grid, the number of degrees of freedom
ranges for $d=2$ between around $65$ and $69$ thousand and for $d=3$ between
$250$ and $301$ thousand. The number of non-zero entries of the stiffness
matrix ranges for $d=2$ between around $3$ and $29$ million and for $d=3$ between 
$79$ and $855$ million.

As predicted, the iteration counts of the multigrid solver with Gauss-Seidel smoother heavily
depend on $p$. This effect is amplified in the three dimensional case. The mass smoother
(which is proven to be $p$-robust) outperforms the Gauss-Seidel smoother
for $p\ge7$ in the two dimensional case and for $p\ge5$ in the three dimensional case.

\subsection{Experiments on nontrivial computational domains}

In this subsection, we present the results for the same model problem as in the previous subsection, but on the
nontrivial geometries shown in Figures~\ref{fig:domains2d}~and~\ref{fig:domains3d}.
\begin{figure}[h]
  \center
\begin{minipage}{0.47\textwidth}
  \centering \includegraphics[width=0.56\textwidth]{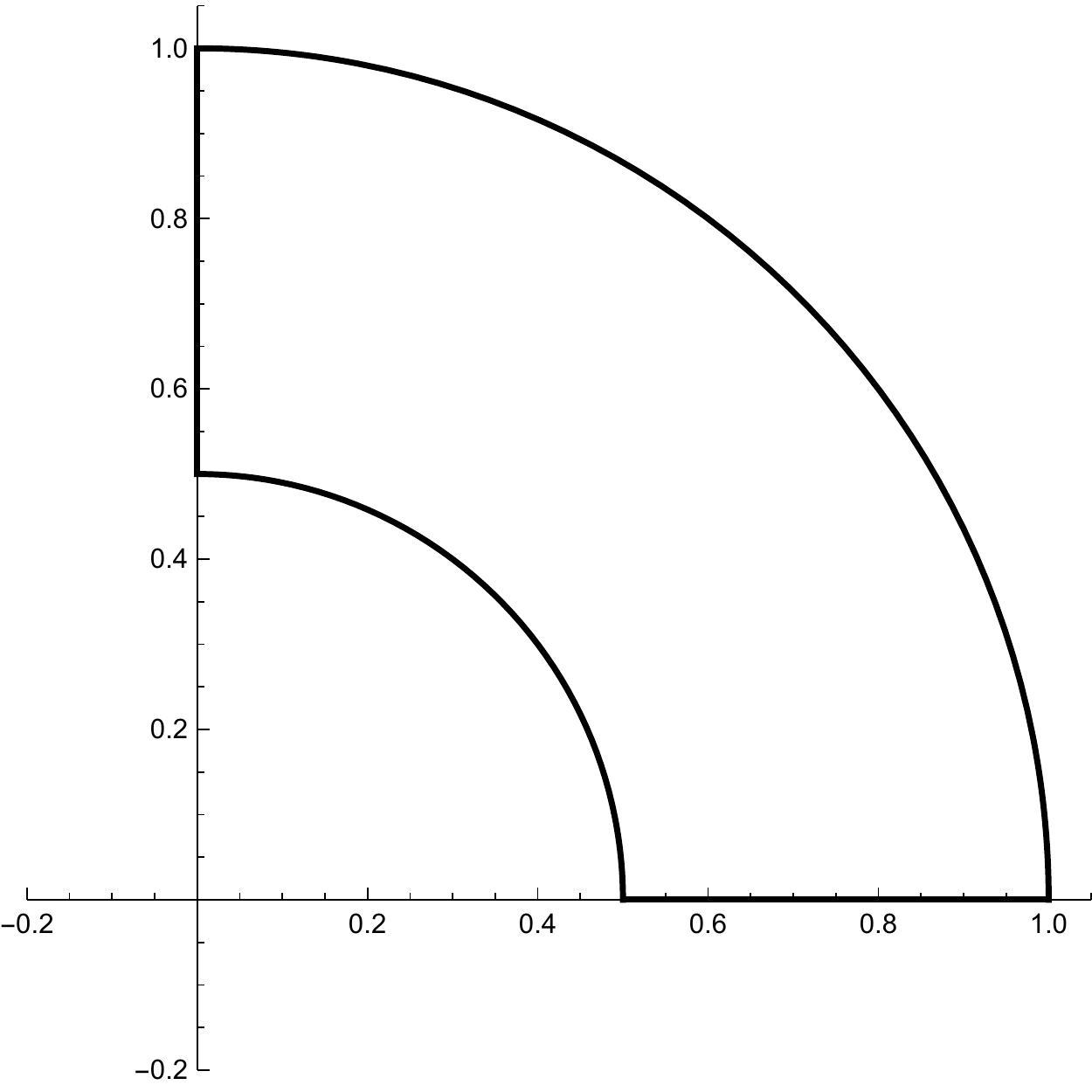}
  \caption{The two-dimensional domain}
  \label{fig:domains2d}
\end{minipage}
\begin{minipage}{0.47\textwidth}
  \centering \includegraphics[width=0.56\textwidth]{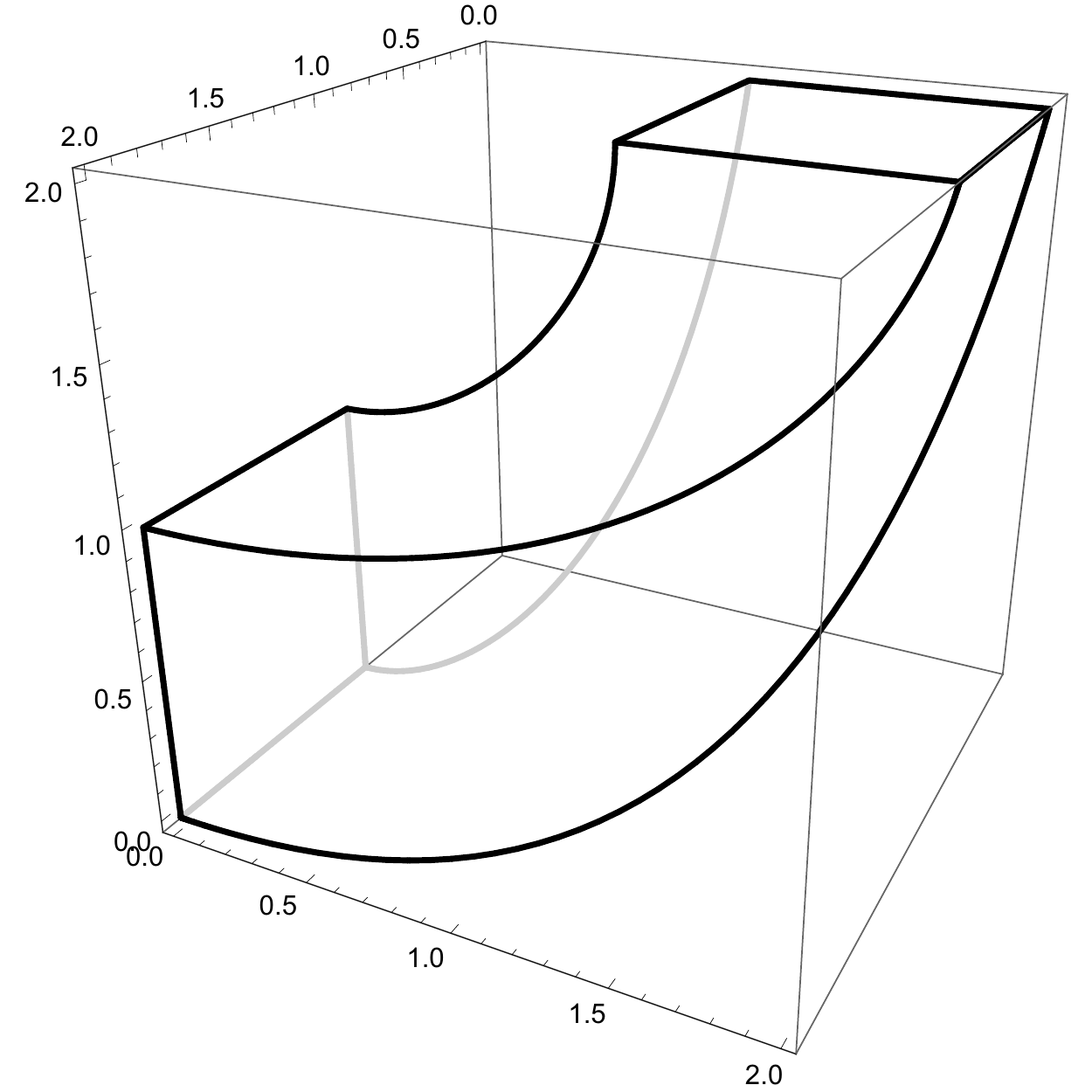}
  \caption{The three-dimensional domain}
  \label{fig:domains3d}
\end{minipage}
\end{figure}

When using the \emph{Gauss-Seidel smoother}, we again perform the multigrid method directly on the system matrix $\mathcal{B}_h$.
When using the \emph{subspace corrected mass smoother}, we perform the multigrid method on the auxiliary operator $\bar{\mathcal{B}}_h$,
representing the reduced inner product $(\cdot,\cdot)_{\bar{\mathcal{B}}(\widehat{\Omega})}$ on the \emph{parameter domain}.
Again, we use that the  matrices $\mathcal{B}_h$ and $\bar{\mathcal{B}}_h$ are spectrally equivalent with constants
independent of $p$ and $h$, but which certainly depend on the geometry transformation.
For the subspace corrected mass smoother, we choose $\sigma^{-1} := 0.015h^{4}$ for $d=2$ and $\sigma:=0.020h^{4}$ for $d=3$. Again,
we choose $\tau=1$ in all cases.

\begin{table}[ht]
\begin{center} 
  \begin{tabular}{| c || c | c | c | c | c | c | c | c | c |}
    \hline
    $\ell$ $\backslash\; p$
    & \hspace{.2cm}3\hspace{.2cm} & \hspace{.2cm}4\hspace{.2cm} & \hspace{.2cm}5\hspace{.2cm} & \hspace{.2cm}6\hspace{.2cm} & \hspace{.2cm}7\hspace{.2cm} & \hspace{.2cm}8\hspace{.2cm} & \hspace{.2cm}9\hspace{.2cm} & \hspace{.15cm}10\hspace{.15cm} \\ \hline \hline
\multicolumn{9}{|l|}{Symmetric Gauss-Seidel} \\ \hline 

    5    & 15 & 15 & 20 & 37 & 69  & 133 & 220 & 413 \\  \hline
    6    & 17 & 16 & 21 & 37 & 66  & 127 & 234 & 428 \\  \hline
    7    & 18 & 17 & 21 & 37 & 68  & 125 & 231 & 413 \\  \hline
    8    & 19 & 17 & 21 & 37 & 67  & 120 & 217 & 380 \\  \hline\hline
\multicolumn{9}{|l|}{Subspace corrected mass smoother } \\ \hline

    5     & 162 & 161 & 152 & 150 & 142 & 134 & 130 & 127 \\  \hline
    6     & 196 & 200 & 200 & 194 & 180 & 179 & 178 & 171 \\  \hline
    7     & 215 & 220 & 225 & 222 & 219 & 210 & 198 & 198 \\  \hline
    8     & 226 & 232 & 243 & 233 & 227 & 221 & 217 & 210 \\  \hline
  \end{tabular}
  \caption{Iteration counts for 2D physical domain given in Figure~\ref{fig:domains2d}}
  \label{t:Ann2D}
\end{center}
\end{table}

\begin{table}[ht]
\begin{center} 
 \begin{tabular}{| c || c | c | c | c | c |}
   \hline
   $\ell$ $\backslash\; p$ & 3 & 4 & 5 & 6 & 7 \\ \hline \hline
 \multicolumn{6}{|l|}{Symmetric Gauss-Seidel} \\ \hline 
   3    & 14 & 32 &  93 & 262 &  763 \\  \hline
   4    & 23 & 35 &  94 & 246 &  634 \\  \hline
   5    & 36 & 37 &  88 & 226 &  516 \\  \hline
   6    & 51 & 45 &  90 & 220 &  \xx \\   \hline\hline
\multicolumn{6}{|l|}{Subspace corrected mass smoother } \\ \hline
   3    & 115 & 114 & 130 & 142 & 154 \\  \hline
   4    & 259 & 243 & 241 & 235 & 233 \\  \hline
   5    & 443 & 441 & 430 & 410 & 380 \\  \hline
   6    & 651 & 650 & 644 & 637 & \xx \\   \hline
 \end{tabular}
 \caption{Iteration counts for 3D physical domain given in Figure~\ref{fig:domains3d}}
 \label{t:Ann3D}
\end{center}
\end{table}

Tables~\ref{t:Ann2D} and \ref{t:Ann3D} show the number of iterations\footnote{
The entry \xx{} indicates that we ran out of memory when assembling the stffness matrix.
} required to reduce the initial residual by a factor of $10^{-8}$.
Again, we obtain very nice results for the Gauss Seidel smoother which -- as for the case of trivial computational domains --
deteriorate if $p$ is increased.

For the mass smoother, we have proven robustness in $p$ and $h$.
Here, the results might look like the mass smoother is not robust in $h$. 
The reason is that
a sufficiency small grid size $h$ is needed to capture the full effect
of the geometry transformation. A similar observation can also be made 
for the Possion problem (cf.~\cite[Table 4]{hofreither2016robust}).
The effects of the geometry transformation can be measured by the condition number of $\bar{\mathcal{B}}_h^{-1} \mathcal{B}_h$.
For the Poisson problem, this condition number was estimated, e.g., in~\cite{sangalli2016isogeometric}. For the biharmonic problem, the condition number is typically the square of the condition number for the Poisson problem, which explains that
the dependence on the geometry transformation is more severe for the biharmonic problem.

\subsection{A hybrid smoother}\label{sec:hybrid}

The numerical experiments have shown that the Gauss-Seidel smoother captures the effects of the geometry transformation quite well
and that it is superior to the mass smoother for nontrivial domains, unless $p$ is particularly high.
The mass smoother is robust in $p$, but does not perform well for non-trivial geometries.
So, it seems to be a good idea to set up a \emph{hybrid smoother} which combines the strengths of both proposed smoothers.

We set up again a conjugate gradient solver, preconditioned with one multigrid V-cycle with $1$ pre and $1$ post smoothing
step. Here, in order to represent the geometry well, the multigrid solver is set up on the original system matrix $\mathcal{B}_h$.
The hybrid smoother consists of one forward Gauss-Seidel sweep, followed by one step of the subspace corrected mass smoother,
finally followed by one backward Gauss-Seidel sweep. As always, the subspace corrected mass smoother -- which requires
a tensor-product matrix -- is constructed based on the reduced matrix $\bar{\mathcal{B}}_h$ on the parameter domain.
For the Gauss-Seidel sweeps, we choose $\tau=1$; and for the subspace corrected mass smoother, we choose 
$\tau = 0.125$ and $\sigma^{-1} = 0.015h^{4}$ for $d=2$ and $\tau = 0.09$ and $\sigma^{-1} = 0.015h^{4}$ for $d=3$.

Tables~\ref{t:Ann2D:hyb} and \ref{t:Ann3D:hyb} show the iteration numbers for the hybrid smoother. We see that the iteration counts
are quite robust both in the spline degree $p$ and in the grid level $\ell$. For small spline degrees, the iteration
counts are comparable to the multigrid preconditioner with Gauss Seidel smoother. For high spline degrees, the hybrid smoother
outperforms both other approaches, even if one considers that the overall costs for one step the hybrid smoother are comparable
to the overall costs of two smoothing steps of one of the other smoothers.

\begin{table}[ht]
\begin{center} 
  \begin{tabular}{| c || c | c | c | c | c | c | c | c | c |}
    \hline
    $\ell$ $\backslash\; p$
    & \hspace{.2cm}3\hspace{.2cm} & \hspace{.2cm}4\hspace{.2cm} & \hspace{.2cm}5\hspace{.2cm} & \hspace{.2cm}6\hspace{.2cm} & \hspace{.2cm}7\hspace{.2cm} & \hspace{.2cm}8\hspace{.2cm} & \hspace{.2cm}9\hspace{.2cm} & \hspace{.15cm}10\hspace{.15cm} \\ \hline \hline
\multicolumn{9}{|l|}{Hybrid smoother } \\ \hline
    5     & 15  & 14  & 16  & 19  & 22  & 24  & 26  & 28 \\  \hline
    6     & 16  & 15  & 17  & 20  & 23  & 26  & 30  & 31 \\  \hline
    7     & 18  & 16  & 18  & 21  & 25  & 28  & 31  & 32 \\  \hline
    8     & 19  & 16  & 19  & 22  & 25  & 28  & 32  & 33 \\  \hline
  \end{tabular}
  \caption{Iteration counts for 2D physical domain given in Figure~\ref{fig:domains2d}}
  \label{t:Ann2D:hyb}
\end{center}
\end{table}

\begin{table}[ht]
\begin{center} 
  \begin{tabular}{| c || c | c | c | c | c |}
    \hline
    $\ell$ $\backslash\; p$ & \hspace{.3cm}3\hspace{.3cm} & \hspace{.3cm}4\hspace{.3cm} & \hspace{.3cm}5\hspace{.3cm} & \hspace{.3cm}6\hspace{.3cm} & \hspace{.3cm}7\hspace{.3cm} \\ \hline \hline
\multicolumn{6}{|l|}{Hybrid smoother } \\ \hline 
        3    & 13 & 14 & 17 & 22 & 26 \\  \hline
        4    & 23 & 22 & 23 & 26 & 31 \\  \hline
        5    & 36 & 34 & 33 & 36 & 41 \\  \hline
        6    & 51 & 44 & 44 & 46 &\xx \\  \hline
  \end{tabular}
  \caption{Iteration counts for 3D physical domain given in Figure~\ref{fig:domains3d}}
  \label{t:Ann3D:hyb}
\end{center}
\end{table}

\section*{Acknowledgments}
The research of the first author was supported
by the Austrian Science Fund (FWF): S11702-N23.

\bibliographystyle{elsarticle-num}
\bibliography{bibliography}

\end{document}